\documentclass{amsart}



\usepackage{url}
\usepackage{verbatim}
\usepackage{showtags}

\usepackage{amssymb}
\usepackage{alltt}
\usepackage{graphicx}
\graphicspath{{figures/}}
\usepackage{siunitx}
\usepackage{amsthm}
\usepackage{enumerate}
\usepackage[colorlinks, linkcolor=blue,  citecolor=blue, urlcolor=blue]{hyperref}%


\setlength{\tabcolsep}{6pt} 


\newtheorem{theorem}{Theorem}[section]

\newtheorem{lemma}[theorem]{Lemma}

\newtheorem{definition}[theorem]{Definition}
\theoremstyle{remark}
\newtheorem{remark}{Remark}[section]
\theoremstyle{definition}
\newtheorem{example}{Example}[section]


\DeclareMathOperator{\dist}{dist}
\newcommand{\norm}[1]{\Vert#1\Vert}

\newcommand{\abs}[1]{\vert#1\vert}
\newcommand{\Abs}[1]{\left\vert#1\right\vert}

\newcommand{\bq}{\begin{equation}}
\newcommand{\eq}{\end{equation}}
\newcommand{\R}{\mathbb{R}}

\newcommand{\Rn}{\R^n}
\newcommand{\e}{\epsilon}
\newcommand{\bO}{\mathcal{O}}

\newcommand{\curve}{\mathcal{C}}

\newcommand{\T}{\mathbf{t}}
\newcommand{\N}{\mathbf{n}}

\newcommand{\x}{\mathbf{x}}

\newcommand{\td}{\text{d}}
\newcommand{\Aff}{F}
\newcommand{\divergence}{\textup{div}}

\newcommand{\median}{\text{median}}
\newcommand{\grad}{\nabla}
\newcommand{\tr}{\text{tr}}
\newcommand{\vi}{\mathbf{v_i}}

\newcommand{\st}{{r}}

\newcommand{\dx}{h}
\newcommand{\Sk}{{F^\dx}}
\newcommand{\Grd}{G^\dx}
\newcommand{\Zb}{\mathbb{Z}}

\DeclareMathOperator{\sgn}{sgn}

\graphicspath{{figures/}}
\usepackage{eqnarray}

\begin{document}

\title
{Numerical methods for motion of level sets by affine curvature}

\author{Adam M. Oberman}
\thanks{Department of Mathematics and Statistics, McGill University, 805 Sherbrooke Street West, Montreal, Quebec, H3A 0G4, Canada ({\tt adam.oberman@mcgill.ca})}

\author{Tiago Salvador}
\thanks{Department of Mathematics and Statistics, McGill University, 805 Sherbrooke Street West, Montreal, Quebec, H3A 0G4, Canada ({\tt tiago.saldanhasalvador@mail.mcgill.ca}). This author partially supported by FCT doctoral grant SFRH / BD / 84041 /2012.}

\date{\today}

\begin{abstract}
We study numerical methods for the nonlinear partial differential equation that governs the motion of level sets by affine curvature.   We show that standard finite difference schemes are nonlinearly unstable.  We build convergent finite difference schemes, using the theory of viscosity solutions.   We demonstrate that our approximate solutions capture the affine invariance and morphological properties of the evolution.  Numerical experiments demonstrate the accuracy and stability of the discretization. 
\end{abstract}

\maketitle

\begin{figure}
\includegraphics[width=0.65\textwidth]{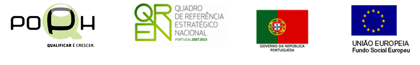}
\end{figure}

\tableofcontents

\section{Introduction}

\subsection{The affine curvature PDE}
The affine curvature evolution is one of the most fundamental geometric evolution equations, after the mean curvature evolution.
It was introduced by Sapiro and Tannenbaum in \cite{sapiro1994affine} and \cite{angenent1998affine} and has applications in mathematical morphology, edge detection, image smoothing, and image enhancement, see~\cite{SapiroBook}. 

We are motivated by recent work of Jeff Calder \cite{caldersmart2016}, which provides an application of the affine curvature PDE to the statistics of large data sets.  The convex hull peeling algorithm  \cite{chazelle1985convex} provides an affine invariant notion of the median and the quantiles of multidimensional probability distributions.  While there is more than one way measure data depth  \cite{barnett1976ordering}, affine invariance is an important property for such measures \cite{liu1999multivariate}.  The level sets of the solution of the affine curvature PDE, with right hand side given by the probability density $\rho$, also gives an affine invariant notion of the depth of $\rho$.   According to \cite{caldersmart2016}, these two notions of depth are equivalent: the rescaled data depth layers of $N$ data points sampled from the density, $\rho$,  given by convex hull peeling algorithm converge, in the limit $N \to \infty$, to the levels given by the solution of the PDE .  Compared to convex hull peeling, the PDE characterization is efficient in terms of the number of data points $N$: an efficient density estimation method can be used to approximate $\rho$, and afterwards the PDE solve does not depend on $N$.  This kind of limiting PDE approach has already been shown to be effective for non-dominated sorting~\cite{calder2014hamilton}.

 The planar motion of level sets by affine curvature is governed by the nonlinear partial differential equation (PDE)
\begin{equation}\label{ACPDE}\tag{AC}
u_t = \Aff[u] := \Abs{\nabla u} \left (k[u]\right)^{1/3}.
\end{equation}
Here $u = u(x,y):\R^2 \to \R$, $\nabla u = (u_x,u_y)$ denotes the gradient of $u$, and $k[u]$ denotes the curvature of the level set of $u$ 
\begin{equation}
\label{curvature}
k[u] = \divergence\left(\frac{\nabla u}{\Abs{\nabla u}}\right) = \frac{u_{xx}u_y^2-2u_x u_y u_{xy}+u_{yy}u_x^2}{(u_x^2+u_y^2)^{3/2}}.
\end{equation}
The affine curvature PDE is closely related to the well known PDE for motion of level sets by mean curvature
\begin{equation}\tag{MC}\label{MCevolution}
u_t =  \Delta_1 u := \Abs{\nabla u}k[u]
\end{equation}
studied in the seminal article \cite{Osher88frontspropagating}.   However, the PDE \eqref{ACPDE}  exhibits instabilities not found in the mean curvature PDE \eqref{MCevolution}, as demonstrated below.  In order to resolve these instabilities, we propose a Lipschitz regularization of the operator. 

The regularized operator is also a geometric PDE, and viscosity solutions converge to solutions of the affine curvature PDE in the limit as the regularization parameter goes to zero~\cite[Theorem 4.6.1]{giga2006surface}. The advantage of the regularized operator is that it allows us to build stable, convergent explicit solvers which are not otherwise available. Moreover, the resulting discretization can be combined  into a filtered scheme that achieves the higher  accuracy of the otherwise unstable standard finite difference scheme.  In addition, the numerical solutions exhibit the affine invariance and morphology properties of the evolution.

\subsubsection*{Our approach to the convergent discretization}
In this paragraph we present an overview of our approach to building an elliptic discretization for clarity.  The details and supporting theory can be found in the sections which follow.  
Elliptic discretizations are available for $\Delta_1 u$, rather than for $k[u]$, 
so we rewrite 
\begin{align}\label{AffPQ}
\Aff[u]	& = A(\Abs{\nabla u},\Delta_1 u) = (\Abs{\nabla u}^2 \Delta_1 u)^{1/3},
&& \text{ where } A(p,q) = \left(p^2 q \right)^{1/3}.
\end{align}
The goal is to make use of available elliptic discretizations of $\pm \Abs{\nabla u}$ and $\Delta_1 u$ to build an elliptic discretization of the full operator $\Aff[u]$.  However, simply inserting these operators into the function $A(p,q)$ is not sufficient: 
 elliptic schemes are built by composing non-decreasing maps with elliptic operators, and $A(p,q)$ fails to be non-decreasing.  Furthermore,  explicit time discretizations require Lipschitz continuous operators, and $A(p,q)$ also fails to be Lipschitz continuous.  

Since the properties of the nonlinear and singular function $A(p,q)$ are so important, we  first study a  model equation in one-dimension.   Define
\bq\label{AffPQ1D}
\Aff^{1D}[u] := A(u_x, u_{xx}) = \left(\Abs{u_x}^2 \:u_{xx}\right)^{1/3}
\eq
so that the $\Aff^{1D}[u]$ operator has the same homogeneity in first and second derivatives as the $\Aff[u]$ operator.  
Like the higher dimensional PDE, the model equation exhibits the instability of standard finite differences.

To build an elliptic scheme for the one dimensional equation, what is needed is a non-decreasing representation of the function $A(p,q)$, which is consistent with $\Aff^{1D}[u]$.   Furthermore, we need a Lipschitz continuous approximation of $A(p,q)$, with Lipchitz constant $K^h$, to build a monotone discretization of the time dependent PDE,  using a time step $dt \le 1/K^h$.  
Once this modified function is available, we proceed by inserting the discretization of the two dimensional operators into the modified function, which results in a convergent scheme.

%
%
%
%
%
%
%
%
%
%
%
%

\subsubsection*{Related numerical work.}

There are by now a large number of numerical methods for the level set mean curvature PDE~\eqref{MCevolution}, see the review papers \cite{surveyMC} and \cite{ciomaga2011image}.  In particular, Catt\'{e} and Dibos proposed a morphological scheme that satisfies the comparison principle~\cite{MorphologicalMC} and the first author presented a convergent wide stencil finite difference scheme \cite{ObermanMC} using a median formula.  

For the affine curvature evolution, the recent article \cite{EsedogluCurvature} gives a Bence-Merriman-Osher~\cite{MBOScheme} thresholding scheme. It introduced a different regularization of the cube root, which was needed for theoretical purposes, but not in practice.   Thresholding methods are effective for moving a given curve by the evolution, and allow for large time steps to be taken.  Using the level set representation  \eqref{ACPDE} moves every level set by the evolution, but requires a much smaller time step. 

Alvarez and Guichard proposed a local scheme which lacks the affine invariance property \cite{guichard1994axiomatisation}. A morphological scheme which generalized \cite{MorphologicalMC} was proposed by Guichard and Morel for affine curvature~\cite{guichard1997partial}. This inf-sup scheme, although morphologically invariant, has some limitations on the speed at which the level set curves move.  In \cite{MoisanScheme}, a nonlocal geometric morphological scheme is presented.

\subsection{Background mathematics}

 \subsubsection*{Euclidean and Affine Curvature}
We begin with a parametric description of the affine curvature evolution, and make a comparison with the more familiar mean curvature evolution.  We refer to \cite[Chapter 2]{SapiroBook} for more details. 
Consider a curve described  parametrically $\curve(s):[a,b] \in \R \to \R^2$ where $s$ parameterizes the curve. 
If the curve is parameterized by the Euclidean arc length, $\Abs{\frac{\td \curve}{\td s}} = 1,$
then the curvature, $k$, is defined, up to a sign, by $\Abs{k} := \Abs{\curve_{ss}}$. 
Letting $\T$ and $\N$ denote respectively the unit Euclidean tangent and the Euclidean normal of the curve, 
\[
\frac{\td \curve}{\td s} = \T \quad \text{and} \quad \frac{\td^2 \curve}{\td s^2} = k\N.
\]
The affine curvature arises from a different parameterization of the curve.  Define the  parameter, $\st$ by the condition that the vectors $C_\st$ and $C_{\st \st}$ form a parallelogram of area $1$, 
\[
[\curve_\st,\curve_{\st \st}] = 1.
\]
Here the brackets denote the determinant of the matrix whose columns are given by those vectors.  By differentiating the last equation, we obtain $[C_\st, C_{\st \st \st}] = 0$, which implies that $C_{\st \st \st} = \mu C_\st$ for some constant, $\mu$.  Using the defining condition again, we obtain
 \[
\mu = [\curve_{ \st \st },\curve_{\st \st \st}],
 \]
 which we define to be the affine curvature of the curve.  The affine curvature is the simplest nontrivial affine invariant of the curve \cite{su1983affine}.
 Ellipses have constant affine curvature.  
 
Under the affine curvature evolution, any convex curve remains convex; any convex smooth curve evolves to an ellipse until it collapses to a single point; any smooth curve becomes convex after a certain time.  Moreover, the affine curvature evolution is invariant under the class of special affine transformations, which are defined by matrices with determinant $1$ (see  Theorem~\ref{MoisanTheorem} below).   Compare this to the mean curvature evolution, which shrinks curves to circles \cite{Gage} and is invariant under the smaller class of orthogonal transformations.  

Affine differential geometry is not defined for non-convex curves.   However, we can define still define the affine curvature evolution using the Euclidean curvature, by taking the velocity to be $k^{1/3}\N$.  

\subsubsection*{Level Set PDE formulation}

The Level Set Method \cite{sethian1999level,OsherFedkiw} for the affine curvature evolution results in the PDE~\eqref{ACPDE}. The level set method has the following advantages compared to parameterized curve evolution: (i) it provides a natural generalization of the flows when the curve becomes singular and notions such as normals are not well defined; (ii) there is no need to track topological changes since they are discovered when the corresponding level set is computed; (iii) it can be discretized on a uniform grid, which is convenient for many applications.  

In the Level Set Method, a curve is represented implicitly as the level set of the auxiliary function $u(x,y,t):\R^2\times \R \to\R$, that is,
\[
\curve(t) = \left\{(x,y)\in \R^2 \mid u(x,y,t) = c\right\}
\]
for some arbitrary constant $c\in \R$. If $u$ satisfies $u_t = \Abs{\nabla u} \beta[u(\cdot,t)]$, for some function $\beta$ which depends on the level set of $u$, then all its level sets move in the normal direction with speed $\beta$. 
For example, choosing $\beta = 1$, we obtain the time-dependent eikonal equation, $u_t =  \Abs{\nabla u}$.
Taking $\beta =  k[u]$ or $\left(k[u]\right)^{1/3}$ we obtain 
 \eqref{MCevolution} and \eqref{ACPDE}, respectively.

\section{The affine curvature PDE}

\subsection{Definition of viscosity solutions}
Viscosity solutions \cite{CIL} provide the correct notion of weak solution to a class of degenerate elliptic PDEs, which includes \eqref{ACPDE} and \eqref{MCevolution}. The article \cite{EvansSpruckMCI} focuses on the mean curvature equation. The book \cite{giga2006surface} established existence and uniqueness of viscosity solutions in a bounded domain, with Neumann boundary conditions, see section 3.6 for the Comparison Principle, and Theorem 4.6.1 for the convergence of approximations.  It is also shown that the Lipschitz constant of the solution is preserved by the evolution (Section 3.5).     See also the book \cite{CaoBook} for viscosity solutions for geometric evolution equations, and numerical methods. 



\begin{definition}\label{defn:degenerateelliptic}
	The function $F:  \mathcal{S}^{n} \times \R^n \times \R \times \Omega : \to \R$ is proper and degenerate elliptic (in the sense of \cite{CIL}) if 
\[
F(M,p,r,x) \le F(N,p,s,x), \quad \text { for all } M \succeq N, r \le s, \text{ and all } x \in \Omega, p \in \R^n
\]	
where $Y \preceq  X$  if $d^\intercal Yd \le d^\intercal X d$ for all $d\in \Rn$.
\end{definition}

\begin{remark}
	By this convention, the Laplacian operator is elliptic when written  $F(M) = -\tr(M)$.  Some authors use the other convention, without the minus sign. 
\end{remark}

\begin{lemma}\label{lem:degell}
The operator $-\Aff[u]$ is degenerate elliptic.
\end{lemma}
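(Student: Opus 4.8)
The plan is to reduce the claim to the monotonicity of a single scalar quantity in the Hessian variable. First I would rewrite $\Aff[u]$ directly in terms of $p=\nabla u$ and $M=D^2u$. Writing $p^\perp=(-u_y,u_x)$ for the rotation of $\nabla u$ by $\pi/2$, a direct computation gives the identity
\[
\Abs{\nabla u}^2\,\Delta_1 u \;=\; u_{xx}u_y^2 - 2u_x u_y u_{xy} + u_{yy}u_x^2 \;=\; (p^\perp)^\intercal M\,p^\perp ,
\]
which is consistent with the representation \eqref{AffPQ}, since $\Abs{\nabla u}^2\Delta_1 u=\Abs{\nabla u}^3 k[u]$. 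Interpreting the cube root throughout as the odd real function $t\mapsto t^{1/3}=\sgn(t)\Abs{t}^{1/3}$, this yields $\Aff[u]=\bigl((p^\perp)^\intercal M\,p^\perp\bigr)^{1/3}$, so that $-\Aff$ is represented by the function
\[
F(M,p,r,x) \;=\; -\bigl((p^\perp)^\intercal M\,p^\perp\bigr)^{1/3}.
\]

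Next I would verify the inequalities of Definition~\ref{defn:degenerateelliptic}. Since $F$ depends neither on $r$ nor on $x$, monotonicity in those arguments is automatic, and only the Hessian variable requires attention. If $M\succeq N$, then by definition $d^\intercal M d\ge d^\intercal N d$ for all $d\in\R^2$; choosing $d=p^\perp$ gives $(p^\perp)^\intercal M\,p^\perp\ge(p^\perp)^\intercal N\,p^\perp$. As $t\mapsto t^{1/3}$ is nondecreasing on $\R$, applying it and then negating gives $F(M,p,r,x)\le F(N,p,r,x)$, which is exactly the proper, degenerate elliptic inequality.

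Finally I would comment on the point $\nabla u=0$, which is where the singularity of $k[u]$ ordinarily forces one to pass to the semicontinuous envelopes $F^\ast$ and $F_\ast$. Here the extra factor $\Abs{\nabla u}^2$ cancels the denominator of $k[u]$: from $\Abs{(p^\perp)^\intercal M\,p^\perp}\le\Abs{p}^2\,\Norm{M}$ one sees that $F$ extends continuously to $p=0$ with value $0$, so in fact $F=F^\ast=F_\ast$ on all of $\mathcal{S}^2\times\R^2\times\R\times\Omega$ and no relaxation is needed. I do not expect a genuine obstacle in this argument; the only two points deserving care are the algebraic identity in the first step and fixing at the outset the convention that the cube root is its continuous, increasing extension to all of $\R$ (rather than being left undefined for negative curvature).
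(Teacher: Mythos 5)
Your proof is correct and follows essentially the same route as the paper: both rest on the identity $\Abs{\nabla u}^2\Delta_1 u = u_{xx}u_y^2-2u_xu_yu_{xy}+u_{yy}u_x^2$ (the paper phrases it as $\Delta_1 u = u_{\T\T}$ with $\T$ the unit tangent, yours uses the unnormalized $p^\perp$), combined with monotonicity of the odd cube root via the representation \eqref{AffPQ}. You simply spell out the details the paper leaves implicit, including the harmless behavior at $\nabla u=0$.
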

\begin{proof}
We start with the observation that using \eqref{curvature}, we can write
\[
\Delta_1 u = u_{\T\T},
\qquad \T = \frac{(-u_y,u_x)}{(u_x^2+u_y^2)^{1/2}},
\]
where $\T$ is the (Euclidean) unit tangent. Then it is clear that $-\Delta_1$ is degenerate elliptic. 
Using the representation \eqref{AffPQ} the degenerate ellipticity of $-\Aff$ follows.
\end{proof}

%
%

We now give the definition of viscosity solutions on a domain $\Omega \subset \R^2$. We follow \cite{giga2006surface}. Let $T > 0$. We are interested in the Cauchy problem
\bq\label{cauchyproblem}
\begin{cases}
u_t = \Aff[u]						& \text{in } (x,t) \in \Omega \times (0,T),\\
B(x,p) := \nu(x)^\intercal p = 0		& \text{on } (x,t) \in \partial \Omega \times (0,T),\\
u(x,0) = u_0(x)						& \text{in } x \in \overline{\Omega}.
\end{cases}
\eq
where $\nu$ is the outward unit normal of $\partial \Omega$.

\begin{definition}
A function $u \in USC(\overline{\Omega} \times [0,T])$ is called a viscosity subsolution of \eqref{cauchyproblem} if $u(x,0) \leq u_0(x)$ for $x \in \overline{\Omega}$ and for any $\varphi \in C^2(\overline{\Omega}\times[0,T])$ such that $u-\varphi$ has a local maximum at $(x,t) \in \overline{\Omega} \times (0,T)$ then
\[\begin{cases}
\varphi_t(x,t) - \Aff[\varphi](x,t) \leq 0								& \text{if } x \in \Omega,\\
\min\{\varphi_t(x,t) - \Aff[\varphi](x,t), B(x,\nabla \phi(x,t))\}	 \leq 0	& \text{if } \in \partial \Omega.
\end{cases}\]
Similarly, $u \in LSC(\overline{\Omega} \times [0,T])$ is called a viscosity supersolution of \eqref{cauchyproblem} if $u(x,0) \geq u_0(x)$ for $x \in \overline{\Omega}$ and for any $\varphi \in C^2(\overline{\Omega}\times[0,T])$ such that $u-\varphi$ has a local minimum at $(x,t) \in \overline{\Omega} \times (0,T)$ then
\[\begin{cases}
\varphi_t(x,t) - \Aff[\varphi](x,t) \geq 0								& \text{if } x \in \Omega,\\
\max\{\varphi_t(x,t) - \Aff[\varphi](x,t), B(x,\nabla \phi(x,t))\} \geq 0	& \text{if } \in \partial \Omega.
\end{cases}\]
Finally, we call $u$ a viscosity solution of \eqref{cauchyproblem} if $u^*$ is a viscosity subsolution and $u_*$ is a viscosity supersolution.
\end{definition}

We have the following uniqueness result, from \cite{giga2006surface}.

\begin{theorem}[Comparison Principle]
Suppose that $\Omega$ is convex with $C^2$ boundary $\partial \Omega$. Let $u$ and $v$ be a viscosity subsolution and supersolution of \eqref{cauchyproblem}. Then $u \leq v$ in $\Omega \times (0,T)$.
\end{theorem}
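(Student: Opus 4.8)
The plan is to run the classical doubling-of-variables argument for second-order degenerate parabolic equations, while paying attention to the two features that put \eqref{cauchyproblem} outside the scope of the off-the-shelf comparison theorem: the operator $\Aff$ is \emph{singular} on the set $\{\nabla u = 0\}$, and the boundary condition is of Neumann type. The reduction to the abstract theory of \cite{giga2006surface} is organized as follows. Since $-\Aff$ is degenerate elliptic (Lemma~\ref{lem:degell}) and $\Aff$ is geometric — invariant under relabeling of level sets — the equation fits the framework of singular degenerate parabolic equations; what one must supply is a quantitative description of $\Aff$ near its singular set. From the representation \eqref{AffPQ}, $\Aff[\varphi] = (\Abs{\nabla\varphi}^2\,\Delta_1\varphi)^{1/3}$ with $\Delta_1\varphi = \varphi_{\T\T}$ bounded by $\Norm{D^2\varphi}$; hence $\Aff[\varphi](x)\to 0$ whenever $\nabla\varphi(x)\to 0$ with $D^2\varphi$ kept bounded. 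Consequently the upper and lower semicontinuous envelopes $\Aff^*,\Aff_*$ of the operator, viewed as functions of $(D^2\varphi,\nabla\varphi)$, \emph{agree, with common value $0$, at every point whose gradient slot is $0$}. This is the fact that tames the singularity.

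Next comes the doubling. Assume for contradiction that $M := \sup_{\Omega\times(0,T)}(u-v) > 0$, fix a point $x_0$ in the interior of $\Omega$, set $\psi(x) = \tfrac12\Abs{x-x_0}^2$, and for small $\varepsilon,\sigma,\eta,\beta>0$ maximize
\[
\Phi(x,y,t,s) = u(x,t) - v(y,s) - \frac{\Abs{x-y}^4}{4\varepsilon} - \frac{\Abs{t-s}^2}{2\sigma} - \frac{\eta}{T-t} - \beta\bigl(\psi(x)+\psi(y)\bigr)
\]
over $\overline\Omega\times\overline\Omega\times[0,T]^2$. The quartic penalization is chosen so that its Hessian vanishes on the diagonal $\{x=y\}$, which is what will let us invoke the envelope value $0$ in the singular case; the $\eta$-term keeps the maximizer away from $t=T$, and $M>0$ together with $u(\cdot,0)\le u_0\le v(\cdot,0)$ keeps it away from $t=0$. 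Standard penalization estimates give a maximizer $(x_\varepsilon,y_\varepsilon,t_\varepsilon,s_\varepsilon)$ with $\Abs{x_\varepsilon-y_\varepsilon}^4/\varepsilon\to0$ and $\Abs{t_\varepsilon-s_\varepsilon}^2/\sigma\to0$ as $\varepsilon,\sigma\to0$. The Neumann condition is disposed of using convexity of $\Omega$: for $x\in\partial\Omega$ the supporting-hyperplane property gives $\ip{\nu(x)}{x-x_0}>0$ strictly, and also $\ip{\nu(x)}{x-y}\ge0$ for $y\in\overline\Omega$; hence the full $x$-gradient of the test function has strictly positive inner product with $\nu(x)$ at any boundary point, so $B(x,\cdot)>0$ there and the boundary term can never be the active one in the $\min$ defining a subsolution (symmetrically for $v$ and $\max$). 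Therefore the genuine PDE inequalities hold at the maximizer regardless of whether $x_\varepsilon$ or $y_\varepsilon$ lies on $\partial\Omega$.

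Applying the Crandall–Ishii (theorem on sums) lemma then yields matrices $X,Y\in\mathcal S^2$ and a common gradient $p_\varepsilon$ (the gradient of the quartic penalty) with $X\preceq Y$ up to an $o(1)$ error, together with
\[
\frac{\eta}{(T-t_\varepsilon)^2} + \frac{t_\varepsilon-s_\varepsilon}{\sigma} \le \Aff^*\bigl(X+\beta I,\;p_\varepsilon+\beta(x_\varepsilon-x_0)\bigr), \qquad \frac{t_\varepsilon-s_\varepsilon}{\sigma} \ge \Aff_*\bigl(Y-\beta I,\;p_\varepsilon-\beta(y_\varepsilon-x_0)\bigr).
\]
Subtracting gives $\eta/(T-t_\varepsilon)^2 \le \Aff^*(\cdots)-\Aff_*(\cdots)$, and it remains to show the right side tends to $0$, which I would do by distinguishing two cases. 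If $p_\varepsilon\ne0$, then $\Aff$ is continuous and degenerate elliptic near the relevant points, so $X\preceq Y+o(1)$ and monotonicity yield $\Aff^*(X+\beta I,\cdot)-\Aff_*(Y-\beta I,\cdot)\le o(1)$ as first $\beta\to0$ and then $\varepsilon,\sigma\to0$. If $p_\varepsilon=0$ (equivalently $x_\varepsilon=y_\varepsilon$), the vanishing of the quartic Hessian on the diagonal forces the penalty contribution to $X,Y$ to zero, so both operators are evaluated at gradient $O(\beta)$ and bounded Hessian; the envelope continuity at the singular set then makes both values tend to $0$. In either case we contradict $\eta>0$, so $u\le v$.

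The main obstacle is precisely the singular set $\{\nabla u=0\}$: there $\Aff$ is genuinely discontinuous, so the "User's Guide" comparison theorem does not apply and one must make the geometric structure (the vanishing of $\Aff^*-\Aff_*$ at zero gradient) interact correctly with a penalization whose Hessian degenerates on the diagonal — and carry out the limits in the right order, $\beta\to0$ before $\varepsilon\to0$, so that the convexity trick handling the Neumann condition survives. This is exactly the analysis performed in \cite[\S3.6, \S4.6]{giga2006surface}, which we invoke for the result.
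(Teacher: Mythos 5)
The paper offers no proof of this theorem at all --- it simply cites \cite{giga2006surface} --- and your argument is a faithful outline of the doubling-of-variables proof carried out in that reference (the quartic penalization to handle the singular set $\{\nabla u = 0\}$, where $\Aff^* = \Aff_* = 0$, and the convexity/quadratic-penalty trick for the Neumann condition), ending with the same citation. This matches the paper's approach; the one minor caveat is that the limits are normally taken with $\varepsilon,\sigma\to 0$ first for fixed $\beta$ and only then $\beta\to 0$ (the Neumann trick needs $\beta$ fixed while $\varepsilon$ shrinks), rather than the order you state, but since you defer the details to \cite[\S 3.6]{giga2006surface} this does not affect correctness.
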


\begin{remark}
We will also be interested in the static equation $\Aff[u] = f$. The definition of viscosity solution and comparison principle presented here for the parabolic equation generalize to the static equation.
\end{remark}

\subsection{Invariance properties of the PDE}

We now study some properties of \eqref{ACPDE}, starting with the following Lemma.
\begin{lemma}\label{lemma:Affproperties}
Let $u \in C^2(\R^2)$. Then the operator $\Aff$ has the following properties:
\begin{enumerate}[i)]
	\item Rescaling: for $h > 0$, define $v(x,y) := u(x/h,y/h)$
	\[\Aff[v] = h^{-4/3}\Aff[u];\]
	\item Morphology: for $g \in C^1(\R)$,
	\[\Aff[g \circ u] = g^\prime(u) \Aff[u];\]
	\item Affine Invariance: for any affine map $\phi(\x) = A\x+\textbf{b}$,
	\[\Aff[u \circ \phi] = (\det A)^{2/3} \Aff[u] \circ \phi.\]
\end{enumerate}
\end{lemma}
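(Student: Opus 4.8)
The plan is to first rewrite $\Aff$ in a form that makes both the cube root and the precise dependence on the derivatives of $u$ transparent, and then push each of the three operations through that form. Substituting the curvature formula~\eqref{curvature} into $\Aff[u]=\Abs{\nabla u}(k[u])^{1/3}$, the factor $\Abs{\nabla u}$ cancels against the factor $(u_x^2+u_y^2)^{3/2}=\Abs{\nabla u}^3$ in the denominator of $k[u]$ once the cube root is distributed, leaving the compact identity
\[
\Aff[u]=\bigl(N[u]\bigr)^{1/3},\qquad
N[u]:=u_{xx}u_y^2-2u_xu_yu_{xy}+u_{yy}u_x^2=\bigl((\nabla u)^\perp\bigr)^{\!\intercal}D^2u\,(\nabla u)^\perp,
\]
where $(\nabla u)^\perp=(-u_y,u_x)=R\nabla u$, $R$ denotes rotation by $\pi/2$, $D^2u$ is the Hessian, and $(\cdot)^{1/3}$ is the real cube root. (Equivalently this follows from~\eqref{AffPQ} together with the identity $\Delta_1 u=u_{\T\T}$ recorded in the proof of Lemma~\ref{lem:degell}, since $\Abs{\nabla u}^2u_{\T\T}=N[u]$.) Since $N[u]$ is a polynomial in the derivatives of $u$, this representation holds at every point, including critical points of $u$, where $N[u]=0$. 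Throughout, the only point to watch is that the real cube root is odd and multiplicative on $\R$, so signs of $g'$ and of $\det A$ cause no difficulty.

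Given this representation, each property reduces to the chain rule plus one short computation. For \emph{rescaling}, $\nabla v=h^{-1}(\nabla u)(\cdot/h)$ and $D^2v=h^{-2}(D^2u)(\cdot/h)$, so $N[v]=h^{-2}\cdot h^{-2}N[u]=h^{-4}N[u]$ and hence $\Aff[v]=h^{-4/3}\Aff[u]$; note that this is simply the special case $A=h^{-1}I$, $\mathbf{b}=0$ of the affine invariance below, so I would present it as a warm-up (or deduce it as a corollary). For \emph{morphology}, assuming enough regularity on $g$ (and otherwise approximating $g$ by $C^2$ functions and passing to the limit), $\nabla(g\circ u)=g'(u)\nabla u$ and $D^2(g\circ u)=g'(u)D^2u+g''(u)\,\nabla u\otimes\nabla u$; substituting into $N[\cdot]$ and using $(\nabla u)^\perp\cdot\nabla u=0$ annihilates the $g''$ term, leaving $N[g\circ u]=g'(u)^3\,N[u]$, whence $\Aff[g\circ u]=g'(u)\,\Aff[u]$ after taking the cube root (valid for $g'$ of either sign, which also explains why $\Aff$ behaves correctly under decreasing changes of variable).

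The \emph{affine invariance} is the one step with any real content, and it rests on the planar linear-algebra identity $A\,R\,A^{\intercal}=(\det A)\,R$, valid for every $2\times 2$ matrix $A$ — equivalently, a linear map rescales the signed area spanned by $\nabla u$ and $(\nabla u)^\perp$ by $\det A$, which is precisely the origin of the exponent $2/3$. Writing $w=u\circ\phi$, the chain rule gives $\nabla w=A^{\intercal}\bigl((\nabla u)\circ\phi\bigr)$ and $D^2w=A^{\intercal}\bigl((D^2u)\circ\phi\bigr)A$, so with $p:=(\nabla u)\circ\phi$ and $M:=(D^2u)\circ\phi$,
\begin{align*}
N[w]&=\bigl(RA^{\intercal}p\bigr)^{\!\intercal}\bigl(A^{\intercal}MA\bigr)\bigl(RA^{\intercal}p\bigr)
=p^{\intercal}\bigl(AR^{\intercal}A^{\intercal}\bigr)\,M\,\bigl(ARA^{\intercal}\bigr)\,p\\
&=(\det A)^2\,(Rp)^{\intercal}M(Rp)=(\det A)^2\,\bigl(N[u]\circ\phi\bigr),
\end{align*}
using $ARA^{\intercal}=(\det A)R$ together with its transpose $AR^{\intercal}A^{\intercal}=(\det A)R^{\intercal}$, and the fact that $Rp=\bigl((\nabla u)\circ\phi\bigr)^\perp$. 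Taking the cube root yields $\Aff[u\circ\phi]=(\det A)^{2/3}\,\Aff[u]\circ\phi$. I do not expect a genuine obstacle here: the entire lemma is bookkeeping, and the only place where the two-dimensional structure enters is the identity $ARA^{\intercal}=(\det A)R$, which I would isolate as a one-line computation rather than embed in the main argument.
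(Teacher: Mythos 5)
Your proof is correct, and for parts ii) and iii) it takes a cleaner route than the paper. Part i) is identical in both: write $(\Aff[u])^3$ as the polynomial $N[u]=u_{xx}u_y^2-2u_xu_yu_{xy}+u_{yy}u_x^2$ and apply the chain rule under the dilation. For part ii), the paper simply invokes the abstract fact that $\Aff$ is a level set operator, so it commutes with relabelling; you instead compute $D^2(g\circ u)=g'(u)D^2u+g''(u)\,\nabla u\otimes\nabla u$ and observe that the $g''$ term dies against $(\nabla u)^\perp$, giving $N[g\circ u]=g'(u)^3N[u]$ directly. Your version is more self-contained and makes transparent why the sign of $g'$ is harmless, at the cost of needing $g\in C^2$ rather than the stated $g\in C^1$ --- you correctly flag the approximation argument needed to close that gap, which the paper's abstract argument sidesteps. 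For part iii), the paper declares the result to follow from ``an elementary but lengthy computation'' and gives no details; your identity $ARA^{\intercal}=(\det A)R$ for $2\times2$ matrices (with $R$ the quarter-turn) replaces that computation with two lines of linear algebra and, as a bonus, explains conceptually where the exponent $2/3$ comes from: the linear map scales the area form, hence $N[u\circ\phi]=(\det A)^2\,N[u]\circ\phi$. One small point of care your write-up handles correctly but is worth keeping explicit: since $(\det A)^2\ge 0$, the cube root produces $\abs{\det A}^{2/3}$, which is the intended reading of $(\det A)^{2/3}$ in the statement when $\det A<0$.
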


\begin{remark}
For $\Delta_1 u$, rescaling gives $\Delta_1 v = h^{-2} \Delta_1 u$ and invariance only holds for orthogonal transformations.
\end{remark}

\begin{proof}
i) Writing, for $\Abs{\nabla u} \not= 0$,  
$k[u] = \frac{1}{\Abs{\nabla u}}\left(\tr(D^2u) - \frac{\left.\nabla u\right.^\intercal D^2u \: \nabla u}{\Abs{\nabla u}^2} \right)$,
allows us to write
$
\left(\Aff[u]\right)^3 = \Abs{\nabla u}^2 \tr(D^2u) - \left.\nabla u\right.^\intercal D^2u \: \nabla u.
$
Then for $v(x,y) := u(x/h,y/h)$ we have $\nabla v = \frac{1}{h} \nabla u$ and $D^2v = \frac{1}{h^2}D^2u$, and property $i)$ follows.

Property $ii)$ follows from the fact that the PDE has the structure of a level set PDE, \cite{giga2006surface}, so the level sets are invariant under relabelling.


Finally, we prove $iii)$. Setting $v(x,y) = u(\phi(x,y))$, we have
\bq\label{lemma:Affproperties_aux1}
\nabla v = A \: \nabla u \quad \text{and} \quad D^2v = A^\intercal D^2u \:A.
\eq
Moreover, 
\[
\Aff[u] = \left(\Abs{\nabla u}^2 \tr(D^2u) - \left.\nabla u\right.^\intercal D^2u \: \nabla u\right)^{1/3} = \left(u_{xx} u_y^2 - 2u_x u_y u_{xy} + u_{yy} u_x^2\right)^{1/3}
\]
Now, using this formula, we compute $\Aff[v]$, after which the derivatives of $v$ are replaced by derivatives of $u$ using \eqref{lemma:Affproperties_aux1}. The proof then follows from an elementary but lengthy computation.
\end{proof}

\begin{theorem}\label{MoisanTheorem}
Consider $\Phi_t:u_0\mapsto u(\cdot,t)$ the solution map of \eqref{ACPDE}. Then $\Phi_t$ satisfies the following properties:
\begin{enumerate}[i)]
	\item \textit{Monotonicity:} $u \leq v \Rightarrow \Phi_t(u) \leq \Phi_t(v)$;
	\item\label{morphology} \textit{Morphology/Relabelling:} for any monotone scalar function $g$,
	\[\Phi_t(g \circ u) = g \circ \Phi_t(u);\]
	\item\label{affineinvariance} \textit{Affine Invariance:} for any affine map $\phi(\x) = A\x+\textbf{b}$,
	\[\Phi_t(u\circ \phi) = \left(\Phi_{t(\det A)^{2/3}}(u)\right) \circ \phi.\]
\end{enumerate}
\end{theorem}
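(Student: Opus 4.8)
The plan is to derive each of the three properties of the solution map $\Phi_t$ from the corresponding structural property already established: monotonicity from the Comparison Principle, and the morphology and affine-invariance statements from parts (ii) and (iii) of Lemma~\ref{lemma:Affproperties}, combined with uniqueness of viscosity solutions. The unifying idea is the standard one: to show two solution maps agree on some data, exhibit a suitable transform of a viscosity solution and check it is again a viscosity solution of the same Cauchy problem (possibly after a time rescaling); uniqueness then forces the identity.

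First I would prove (i). Given $u_0 \le v_0$, set $u(\cdot,t) = \Phi_t(u_0)$ and $v(\cdot,t) = \Phi_t(v_0)$. Then $u$ is a viscosity subsolution and $v$ a viscosity supersolution of \eqref{cauchyproblem} with $u(x,0) = u_0(x) \le v_0(x) = v(x,0)$, so the Comparison Principle gives $u \le v$, i.e.\ $\Phi_t(u_0) \le \Phi_t(v_0)$. For (ii), let $g$ be monotone (it suffices to treat smooth strictly monotone $g$ by an approximation/density argument, since $\Phi_t$ is monotone and continuous with respect to uniform convergence of data) and set $w(\cdot,t) := g \circ \Phi_t(u_0)$. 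Using part (ii) of Lemma~\ref{lemma:Affproperties} at the level of smooth test functions — if $w - \varphi$ has a local max at $(x,t)$ then, composing with $g^{-1}$, $\Phi_t(u_0) - g^{-1}\circ\varphi$ has a local max there, and $\Aff[g^{-1}\circ\varphi] = (g^{-1})'(\varphi)\,\Aff[\varphi]$ — one checks that $w$ solves the same PDE in the viscosity sense with initial data $g\circ u_0$; the boundary condition is preserved because $B$ depends only on $\nabla\varphi$ and $\nabla(g^{-1}\circ\varphi) = (g^{-1})'(\varphi)\nabla\varphi$ is a positive multiple of $\nabla\varphi$. By uniqueness, $w = \Phi_t(g\circ u_0)$, which is the claim. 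Care is needed with signs when $g$ is decreasing (sub/supersolutions swap, and $(g^{-1})' < 0$), but the two sign changes cancel.

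For (iii), with $\phi(\x) = A\x + \textbf{b}$, set $v_0 := u_0 \circ \phi$ and define the candidate $v(\x,t) := \big(\Phi_{t(\det A)^{2/3}}(u_0)\big)(\phi(\x))$. By part (iii) of Lemma~\ref{lemma:Affproperties}, $\Aff[u\circ\phi] = (\det A)^{2/3}\,\Aff[u]\circ\phi$; combining this spatial scaling with the time rescaling $t \mapsto t(\det A)^{2/3}$ (so the factor $(\det A)^{2/3}$ from $\Aff$ is absorbed into $v_t$ by the chain rule) shows $v$ is a viscosity solution of \eqref{ACPDE} with $v(\x,0) = u_0(\phi(\x)) = v_0(\x)$. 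Again one must verify the transform respects the viscosity solution framework on the domain: pulling test functions back through $\phi$ preserves local maxima/minima, and preserves the Neumann condition provided $\phi$ maps $\Omega$ to a domain on which the Comparison Principle applies (the cleanest statement is on $\R^2$, or one restricts $\phi$ to special affine maps preserving the convex $C^2$ domain). Uniqueness then yields $\Phi_t(u_0\circ\phi) = \big(\Phi_{t(\det A)^{2/3}}(u_0)\big)\circ\phi$.

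The main obstacle is not any single calculation but the bookkeeping around the viscosity solution framework under these transforms — in particular keeping the sign conventions straight in (ii) for decreasing $g$, and handling the boundary condition and domain correctly in (iii) (one genuinely needs $\det A = \pm 1$, or to work on all of $\R^2$, for the statement to be clean, which is why the theorem is attributed to the special-affine invariance emphasized earlier). Everything else is a routine application of uniqueness plus Lemma~\ref{lemma:Affproperties}.
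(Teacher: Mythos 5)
Your proposal is correct and follows essentially the same route as the paper: monotonicity from the comparison principle, and the morphology and affine-invariance properties by transforming a solution via Lemma~\ref{lemma:Affproperties}~(ii) and (iii) together with the chain rule in $t$ and uniqueness. In fact you supply more detail than the paper does (the paper argues only ``formally'' at the level of classical derivatives), in particular the pullback of test functions, the sign bookkeeping for decreasing $g$, and the domain/boundary issues in the affine case.
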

\begin{remark}
Note that our rescaling factor property $iii)$ differs from the one in \cite{MoisanScheme}, however both formulas agree (and give unity) for special affine transformations which have determinant $1$.
\end{remark}
\begin{proof}
We establish the three properties one by one.
\begin{enumerate}[i)]
	\item \textit{Monotonicity:}
	It follows easily from the fact that \eqref{ACPDE} is an elliptic PDE and thus satisfies a comparison principle.
	\item \textit{Morphology:}
	Let $g$ be a monotone scalar function and $u$ be the solution of \eqref{ACPDE} with initial condition $u(\cdot,0) = u_0$. We want to show that $v := g \circ u$ is the solution of \eqref{ACPDE} with initial condition $v(\cdot,0) = g \circ u_0$. Formally, it is enough to observe that
\[
v_t = g^\prime(u) u_t, \qquad \Aff[v] = g^\prime(u) \Aff[u],
\]
where the second equality follows by Lemma \ref{lemma:Affproperties}.
	\item \textit{Affine Invariance:}
	Let $\phi$ be an affine transformation with $\phi(\x) = A\x+\textbf{b}$. Let $u$ be the solution of \eqref{ACPDE} with initial condition $u(\cdot,0) = u_0$. We want to show that
\[v(x,y,t) := u(\phi(x,y),t (\det A)^{2/3})\]
is the solution of \eqref{ACPDE} with initial condition $v(\cdot,0) = u_0 \circ \phi$. We have
\[v_t  = (\det A)^{2/3} u_t,\]
which together with Lemma \ref{lemma:Affproperties} $iii)$, is enough to conclude the proof.
\end{enumerate}
\end{proof}

\subsection{An exact solution}
We now give an example of an exact solution for \eqref{ACPDE}. See also  \cite[Section 1.7.4]{giga2006surface}. 

\begin{lemma}\label{lemma:ellipse}
Given $a,b > 0$, let $u:\R^2\times [0,\infty)$ be given by the shifted ellipse
\[u(x,y,t) = t + \frac{3}{4}(ab)^{2/3}\left(\left(\frac{x}{a}\right)^2+\left(\frac{y}{b}\right)^2\right)^{2/3} = t + \frac{3}{4}\left(\frac{b}{a}x^2+\frac{a}{b}y^2\right)^{2/3}.\]
Then $u$ is a classical solution of \eqref{ACPDE}. Moreover, we conclude that ellipses remain ellipses of fixed eccentricity under the motion by affine curvature.
\end{lemma}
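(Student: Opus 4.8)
The plan is to exploit the invariance and homogeneity of $\Aff$ already recorded in Lemma~\ref{lemma:Affproperties} to reduce to the case of a circle, and then to verify the circular case by a one-variable computation. Since $u_t \equiv 1$, everything comes down to showing that the spatial profile
\[
w(x,y) := \tfrac34 (ab)^{2/3}\bigl((x/a)^2 + (y/b)^2\bigr)^{2/3}
\]
satisfies $\Aff[w] \equiv 1$. For this I will use the cubic form $\left(\Aff[u]\right)^3 = \Abs{\nabla u}^2 \tr(D^2 u) - \nabla u^\intercal D^2 u\, \nabla u$ obtained in the proof of Lemma~\ref{lemma:Affproperties}; in particular, that form makes $\Aff$ manifestly positively homogeneous of degree one, $\Aff[c\,u] = c\,\Aff[u]$ for $c>0$.

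First I would strip the parameters $a,b$. Let $\tilde w(x,y) = \tfrac34 (x^2+y^2)^{2/3}$ and $\phi(x,y) = (x/a,\, y/b)$, so that $A = \mathrm{diag}(1/a,1/b)$, $\det A = 1/(ab)$, and $w = (ab)^{2/3}\,(\tilde w \circ \phi)$. By the degree-one homogeneity and Lemma~\ref{lemma:Affproperties}~(iii),
\[
\Aff[w] = (ab)^{2/3}\, \Aff[\tilde w \circ \phi] = (ab)^{2/3}(\det A)^{2/3}\, \Aff[\tilde w]\circ \phi = \Aff[\tilde w]\circ \phi,
\]
so it remains to show $\Aff[\tilde w] \equiv 1$. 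For this I would pass to radial form: with $\rho = (x^2+y^2)^{1/2}$ and $\tilde w = \tfrac34 \rho^{4/3}$, one has $\Abs{\nabla \tilde w} = \tilde w'(\rho) = \rho^{1/3}$, while on the region $\rho \neq 0$ the level sets of $\tilde w$ are circles of radius $\rho$, hence $k[\tilde w] = 1/\rho$. Therefore $\Aff[\tilde w] = \Abs{\nabla \tilde w}\,(k[\tilde w])^{1/3} = \rho^{1/3}\cdot \rho^{-1/3} = 1$. (Alternatively, one substitutes the explicit first and second derivatives of $\tilde w$ directly into the cubic form; this is a short computation.) Together with $u_t = 1$ this gives the PDE at every point with $\nabla u \neq 0$.

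The one delicate point is the origin $\{x=y=0\}$, where $w$ fails to be $C^2$ and $\nabla w$ vanishes --- precisely the locus where $\Aff$ is singular --- so the equation must be read in the viscosity sense there, as is standard for geometric evolutions. Because the spatial profile attains a strict minimum at the origin, touching by $C^2$ test functions from above is unobstructed and the subsolution inequality is immediate, while for the supersolution inequality any test function touching from below has vanishing gradient at the origin and the inequality reduces to the usual check against the lower envelope of $\Aff$; I expect this to be the only real obstacle, and a routine one. Finally, the eccentricity statement is read off directly: the level set $\{u(\cdot,\cdot,t)=c\}$ is the ellipse $\{\tfrac{b}{a}x^2 + \tfrac{a}{b}y^2 = (\tfrac43(c-t))^{3/2}\}$, whose semi-axes have the $t$-independent ratio $a/b$.
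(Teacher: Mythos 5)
Your argument is correct, but it follows a genuinely different route from the paper's. The paper substitutes the ansatz $u = t + \phi(S)$ with $S(x,y)=(x/a)^2+(y/b)^2$, reduces the PDE to the scalar identity $1 = \phi'(S)\,\Abs{\nabla S}\,\divergence\left(\nabla S/\Abs{\nabla S}\right)^{1/3}$, computes $\Abs{\nabla S}$ and the divergence explicitly, and checks that $\phi(s)=\tfrac{3}{4}(ab)^{2/3}s^{2/3}$ solves the resulting ODE with $\phi(0)=0$; it never invokes the invariance properties. You instead use the degree-one homogeneity of $\Aff$ together with Lemma~\ref{lemma:Affproperties}~(iii) to strip the parameters $a,b$ (the factors $(ab)^{2/3}$ and $(\det A)^{2/3}=(ab)^{-2/3}$ cancel exactly), and then verify the radial case from the geometric facts $\Abs{\nabla\tilde w}=\rho^{1/3}$ and $k[\tilde w]=1/\rho$. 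Your reduction is cleaner and makes transparent \emph{why} the exponents $2/3$ and the prefactor $\tfrac34(ab)^{2/3}$ are the right ones, at the cost of leaning on Lemma~\ref{lemma:Affproperties}~(iii), whose own proof the paper only sketches as "an elementary but lengthy computation"; the paper's direct calculation is self-contained and also exhibits the more general mechanism (any radial-in-$S$ profile reduces to an ODE for $\phi$). You are also more careful than the paper about the origin, where $u$ is only $C^1$ and the equation must be read in the viscosity sense --- the paper's proof silently works only where $\nabla S \neq 0$ --- though your treatment of the sub/supersolution inequalities there is itself only a sketch and would need the semicontinuous envelopes of $\Aff$ spelled out to be complete. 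The eccentricity computation at the end is correct.
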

\begin{proof}
Define
\[\phi(s) = \frac{3(ab)^{2/3}}{4}s^{2/3} \quad \text{and} \quad S(x,y) = \left(\frac{x}{a}\right)^2+\left(\frac{y}{b}\right)^2.\]
We can then rewrite $u$ as $u(x,y,t) = t + (\phi \circ S)(x,y)$.

The proof then follows by showing that $u$ is a solution of \eqref{ACPDE} if and only if
\bq\label{lemma:ellipse-aux}
1 = \phi^\prime(S) \Abs{\nabla S}\: \divergence\left(\frac{\nabla S}{\Abs{\nabla S}}\right)^{1/3}.
\eq

Indeed, we have that
\[\nabla S(x,y) = 2\left(\frac{x}{a^2},\frac{y}{b^2}\right), \quad \Abs{\nabla S} = 2\rho \quad \text{and} \quad \rho = \sqrt{\left(\frac{x}{a^2}\right)^2+\left(\frac{y}{b^2}\right)^2}.\]
Moreover,
\[
\divergence\left(\frac{\nabla S}{\Abs{\nabla S}}\right) =
\frac{1}{a^2\rho}-\frac{\left(x/a^2\right)^2}{a^2\rho^3}+\frac{1}{b^2\rho}-\frac{\left(y/b^2\right)^2}{b^2\rho^3}
= \frac{\left(y/b^2\right)^2}{a^2\rho^3}+\frac{\left(x/a^2\right)^2}{b^2\rho^3} = \frac{S(x,y)}{a^2b^2\rho^3}.
\]
Then \eqref{lemma:ellipse-aux} is equivalent to
\[1 = \phi^\prime(S)2\rho\left(\frac{S}{a^2b^2\rho^3}\right)^{1/3}.\]
To conclude the proof it is now enough to observe that $\phi$ is the solution of the equation above together with $\phi(0) = 0$.
\end{proof}

\section{Finite difference equations and convergence} 

\subsection{Elliptic finite difference schemes}\label{sec:convergencetheory}
The framework developed in \cite{BSnum} provides conditions for when approximation schemes converge to the unique viscosity solution of a PDE.  The article \cite{ObermanSINUM} defines elliptic finite difference schemes, which are monotone and stable.

\begin{theorem}\label{BStheorem}
Consider an elliptic PDE that satisfies a comparison principle for viscosity solutions. A consistent, stable and monotone approximation scheme converges locally uniformly to the (unique) viscosity solution.
\end{theorem}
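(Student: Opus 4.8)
The plan is to follow the classical half-relaxed limits argument of Barles and Souganidis. Write $u^h$ for the solution produced by the scheme with mesh parameter $h$. First I would use stability, which supplies a bound on $u^h$ uniform in $h$, to define the upper and lower relaxed limits
\[
\overline{u}(x) = \limsup_{h\to 0,\ y\to x} u^h(y), \qquad \underline{u}(x) = \liminf_{h\to 0,\ y\to x} u^h(y).
\]
By construction $\overline{u}$ is upper semicontinuous, $\underline{u}$ is lower semicontinuous, and $\underline{u} \le \overline{u}$ everywhere.

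The core of the argument is to show that $\overline{u}$ is a viscosity subsolution and $\underline{u}$ a viscosity supersolution of the PDE. I would treat the subsolution property in detail (the supersolution case being symmetric). Let $\varphi$ be a smooth test function such that $\overline{u}-\varphi$ has a strict local maximum at a point $x_0$; from the definition of $\overline{u}$ one extracts mesh points $x_h \to x_0$ at which $u^h - \varphi$ attains a local maximum and along which $u^h(x_h) \to \overline{u}(x_0)$. Setting $\xi_h = u^h(x_h) - \varphi(x_h)$, the monotonicity of the scheme permits replacing $u^h$ by $\varphi + \xi_h$ inside the scheme evaluated at $x_h$, yielding an inequality of the form $S^h[\varphi + \xi_h](x_h) \le 0$ with the appropriate sign convention. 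Letting $h \to 0$ and invoking consistency, together with $\xi_h \to 0$, gives the subsolution inequality for $\varphi$ at $x_0$. The same reasoning applied to strict local minima with reversed inequalities shows $\underline{u}$ is a supersolution.

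With both relaxed limits in hand, I would invoke the comparison principle hypothesized in the statement: since $\overline{u}$ is a subsolution and $\underline{u}$ a supersolution, comparison forces $\overline{u} \le \underline{u}$. Combined with the trivial inequality $\underline{u} \le \overline{u}$, this yields $\overline{u} = \underline{u} =: u$, which is therefore continuous and is simultaneously sub- and supersolution, hence the (unique) viscosity solution. Finally, the equality $\overline{u} = \underline{u}$ together with the definition of the relaxed limits is precisely the assertion that $u^h \to u$ locally uniformly, which closes the proof.

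I expect the middle step to be the main obstacle: perturbing the test function to a strict extremizer, locating the approximating mesh extremizers, and exploiting monotonicity to substitute $\varphi + \xi_h$ into the scheme while controlling the vanishing constants $\xi_h$. In the setting of this paper there is the additional wrinkle that the scheme is an explicit time-stepping, wide-stencil scheme, so the test function depends on space and time and the argument must track the time slice as well; monotonicity is exactly what legitimizes the substitution in that case too. Subtleties at the (Neumann) boundary are absorbed by the doubled sub/supersolution conditions in the definition of viscosity solution and do not alter the structure of the argument.
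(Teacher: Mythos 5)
The paper does not prove this theorem; it is quoted from the Barles--Souganidis framework and the reference \cite{BSnum} is invoked for the argument. Your proof is a correct reconstruction of exactly that argument --- stability gives the half-relaxed limits, monotonicity plus consistency makes them sub- and supersolutions, and the comparison principle collapses them to the unique solution with local uniform convergence --- so it matches the approach the paper relies on.
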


Consider the domain $\Omega \subset D$ where $D$ is a $n$-cube.  We use a uniform grid of spacing, $\dx$,  in $D$, and define the finite difference grid, 
$\Grd = \{ x \in h \Zb^n  ~\mid ~  x\in D \}$. 
	
\begin{definition}[Elliptic finite difference equation]
Let $C(\Grd)$ denote the set of grid functions, $u: \Grd \to \R$. 
A finite difference operator is a map 
 $F^\dx: C(\Grd) \to C(\Grd)$, which has the following form,
\[
F^\dx[u](x) = F^\dx(x, u(x), u(x) - u(\cdot) ), 	
\]
where $u(\cdot)$ indicates the values of the grid function $u$.
It has \emph{stencil width} $W$ if 
 $F^\dx(x, u(x), u(x) - u(\cdot) )$ depends only on values $u(y)$ for $\norm{y-x}_\infty/\dx  \le W$.  A \emph{solution} of the finite difference scheme is a grid function which satisfies the equation $F^\dx[u](x) = 0$ for all $x \in \Grd$.
The finite difference operator is elliptic if 
\[
	r \le s, ~ v(\cdot) \le w(\cdot) \implies 
\Sk\left(x,r,v(\cdot)\right) \leq \Sk(x,s,w(\cdot)).
\]
\end{definition}

\begin{definition}[Consistent]\label{def:consistentSimpler}
The scheme~$\Sk$ is \emph{consistent} with the equation~$F$ if for any smooth function $\phi$ and $x\in{\Omega}$,
\[
\lim_{\dx \to 0,y\to x} \Sk[\phi](y) = F(D^2 \phi(x), \grad \phi(x),\phi(x),x).
\]
The scheme is accurate to order $k$ if 
\[
\lim_{y\to x} \Sk[\phi](y) = F(D^2 \phi(x), \grad \phi(x),\phi(x),x) + \bO(\dx^k).
\]
\end{definition}
\begin{remark}
	Consistency is usually verified by a Taylor series argument. 
\end{remark}

\begin{definition}[Discrete Comparison Principle]
Given the finite difference operator $\Sk: C(\Grd) \to C(\Grd)$, the \emph{comparison principle} holds for $\Sk$ if
\begin{equation}
\label{comparison}\tag{Comp}
\Sk[u](x)  \le \Sk[v](x) \text{ for all } x  \implies u(x) \le v(x)  \text{ for all } x.
\end{equation}
\end{definition}

\begin{remark}
In the Discrete Comparison Principle, the boundary conditions are encoded in $\Sk$, the assumption $\Sk[u] \leq \Sk[v]$ means $u \le v$ at Dirichlet boundary points.  Uniqueness of solutions follows from the Discrete Comparison Principle, since if $u,v$ are solutions, then  $\Sk(u) = \Sk(v) = 0$, so $u \le v$ and $u\ge v$, and thus  $u=v$.
\end{remark}

\begin{definition}[Lipschitz constant of the scheme] 
The finite difference operator $\Sk: C(\Grd) \to C(\Grd)$ is Lipschitz continuous with constant $K^\dx$ if $K^\dx$ is the smallest constant such that
\[
\Abs{\Sk\left(x,r,v(\cdot)\right) - \Sk\left(x,s,w(\cdot)\right)} \le K^\dx \max( \abs{r-s}, \norm{v - w}_\infty ), \text { for all } x \in \Grd
\]	
\end{definition}

\begin{remark}
	In the definition above, the maximum on the right hand side can be replaced with a maximum over the neighbouring grid values without changing the Lipschitz constant.  
\end{remark}

In \cite{ObermanSINUM}, it is shown that the Euler map $u \to u - \rho F[u]$  with constant $\rho \le 1/K^\dx$ is monotone, and a (non-strict) contraction.  By adding an arbitrarily small multiple of $u$ to the scheme, the comparison principle holds, and the Euler map is a strict contraction. The resulting scheme provides a convergent discretization of the solution of the parabolic PDE $u_t + F[u]=0$, since the discretization is monotone and stable. 

In this context, monotone means that the discrete comparison principle holds: if  $u(x,n), v(x,n)$ are solutions of the scheme, $u(x,n) \leq v(x,n)$ for all $x$ implies $u(x,n+1) \leq v(x,n+1)$ for all $x$.

Filtered schemes were first proposed in \cite{FroeseObermanFiltered} in the context of the Monge-Amp\`{e}re equation to overcome the limited accuracy of the wide-stencil elliptic schemes. The idea is to blend a stable elliptic convergent scheme with an accurate scheme and retain the advantages of both: stability and convergence of the former, and higher accuracy of the latter. The only requirement on the accurate scheme is consistency. Filtered schemes have also been used for  Hamilton-Jacobi equations in \cite{FilteredHJ}.  We require that the difference between the filtered scheme and the elliptic scheme is uniformly bounded.  Under this assumption, the proof of the Barles-Souganidis theorem can be modified to obtain convergence results for filtered schemes \cite{FroeseObermanFiltered}.

\subsection{Example finite difference schemes}

Define the standard finite difference operators for $u_x, u_y$ and $u_{xy}$.
\begin{definition}[Standard finite differences for $u_x$, $u_y$,  and $u_{xy}$]
\begin{equation}
\label{standard_fd}
\begin{aligned}
& u_x^h := \frac{u(x+h,y)-u(x-h,y)}{2h} = u_x(x,y) + \bO(h^2),\\
& u_{xx}^h := \frac{u(x+h,y)-2u(x,y)+u(x-h,y)}{h^2} = u_{xx}(x,y) + \bO(h^2),\\
& u_{xy}^h := \frac{u(x+h,y+h)+u(x-h,y-h)-u(x+h,y-h)-u(x-h,y+h)}{4h^2} = u_{xy}(x,y) + \bO(h^2),
\end{aligned}
\end{equation}
and, similarly for $u_y^h$, and $u_{yy}^h$ in the $y$ coordinate. 
\end{definition}

The operator $-u_{xx}^h$ is elliptic, the others are not. 

\begin{definition}
Define the  backward and forward first order derivatives
\begin{align*}
D^-_x[u](x,y) &:= \frac{u(x,y)-u(x-h,y)}{h} = u_x(x,y) + \bO(h),
\\
-D^+_x[u](x,y) &:= \frac{u(x,y)-u(x+h,y)}{h} = -u_x(x,y) + \bO(h),
\end{align*}
and, similarly for $D^-_y[u]$ and $D^-_y[u]$.
\end{definition}
The operators $D^-_x$ and $-D^+_x$ are elliptic.

\subsection{Non-decreasing functions and elliptic discretizations}\label{sec:schemeEikonal}

In order to build elliptic difference schemes, we study the properties of non-decreasing maps. This is required since in general elliptic schemes are built composing non-decreasing maps with elliptic terms. Moreover, for some nonlinear elliptic PDEs the domain of ellipticity is restricted and thus we need to build non-decreasing extensions of the underlying functions.

In this section we define elliptic schemes for $\Abs{\nabla u}$ and $-\Abs{\nabla u}$. This is done both as an illustration of the general principle, and because these schemes will be needed for our discretization of the two dimensional affine curvature operator. 

\begin{definition}[Non-decreasing functions]
For $x,y \in \R^N$ we say $x \leq y$ if $x_i \leq y_i$ for all $i =1,\dots, N$. The function $F:\R^N \to \R$ is non-decreasing, $F \in ND(\R^N)$, if 
\[
x \leq y \implies F(x) \leq F(y).
\]
Write $\R^+ = \{ x \in \R \mid x \geq 0 \}$ and $\R^- = \{ x \in \R \mid x \leq 0 \}$.
Furthermore, if $F \in ND(\R^N)$ and $F: \R^N \to \R^+$, (resp. $F: \R^N \to \R^-)$ we write $F \in ND^+(\R^N)$ (resp. $F \in ND^-(\R^N)$). 
\end{definition}

\begin{remark}
When $f \in C^1(\R^N)$, if $f$ is nondecreasing in each variable,  i.e., $f_{x_i} \geq 0$ for all $i =1,\dots, N$, then $f \in ND(\R^N)$. 
\end{remark}

\begin{remark}
The set of nondecreasing functions is closed under the composition of functions. In particular, sums of nondecreasing functions are nondecreasing. 
\end{remark}

\begin{example}
The function $x^+ = \max(x,0) \in ND^+(\R)$ and  $x^- = \min(x,0)$ is in $ND^-(\R)$.
Write $N(x,y) = \sqrt{x^2+y^2}$, and define $N^+(x,y) := N(x^+,y^+)$ and $N^-(x,y) := -N(x^-,y^-)$. Then $N^+ \in ND^+(\R^2)$, and  $N^- \in ND^-(\R^2)$. Furthermore, $N^+ = N$ on  $\{ x, y \geq 0 \}$, $N^- = -N$ on  $\{ x, y \leq 0 \}$.
\end{example}

\begin{example}[Upwinding]
	More generally, if we consider the operator $a(x)u_x$, then the upwind discretization 
	\[
	a^+ D^-_x[u] + a^-D^+_x[u]
	\]
is first order accurate  and elliptic. 
\end{example}

The first example of using elliptic discretizations as building blocks is given by the following.

\begin{example} \label{defn:elliptic_1}\label{lemma:basic_elliptic}
Define 
\[
\Abs{u_x^h}^+ = \max\left\{-D^+_xu,D^-_xu,0\right\}, 
\quad 
-\Abs{u_x^h}^- = \min\left\{-D^+_xu,D^-_xu,0\right\}
\]
that approximate $\abs{u_x}$ and $-\abs{u_x}$to first order.  The operators $\Abs{u_x^h}^+$ and $-\Abs{u_x^h}^-$ are elliptic, the former is nonnegative, and the latter is nonpositive. 
\end{example}

\begin{proof}
Since $D^-_x$ and $-D^+_x$ are elliptic and $\max,\min \in ND(\R^2)$, the composed functions $\Abs{u_x^h}^+ $ and $-\Abs{u_x^h}^-$ are elliptic.
\end{proof}

%
%

\begin{example}

Define 
\[
\abs{\nabla u^h}^+ = N \left(|u_x^h|^+,|u_y^h|^+\right),
\qquad
-\abs{\nabla u^h}^- = -N\left(-|u_x^h|^-,-|u_y^h|^-\right)
\]
which are elliptic,  consistent with $\Abs{\nabla u}$, and $-\Abs{\nabla u}$, respectively,  and first order accurate.  
\end{example}

\begin{proof}
Since $|u_x^h|^+$ and $|u_y^h|^+$ are nonnegative, 
\[
\abs{\nabla u^h}^+ = N \left(|u_x^h|^+,|u_y^h|^+\right) = N^+ \left(|u_x^h|^+,|u_y^h|^+\right).
\]
Thus, since $N^+ \in ND(\R^2)$ and $|u_x^h|^+$ and $|u_y^h|^+$ are elliptic, the composed function $|\nabla u^h|^+$ is elliptic.

Similarly, $-|u_x^h|^-$ and $-|u_y^h|^-$ are elliptic and nonpositive and 
\[
-|\nabla u^h|^- = N^-\left(-|u_x^h|^-,-|u_y^h|^-\right)
\]
with $N^- \in ND(\R^2)$. Thus $-|\nabla u^h|^-$ is elliptic. 

Consistency and first order accuracy follow from the generalized chain rule: the discretization of each term is consistent and first order accurate, and $N(\cdot)$ is a 1-Lipschitz function.
\end{proof} 

\section{Numerical methods for the model equation}\label{sec:model1D}

In this section we build convergent discretizations to the one dimensional model of our equation defined by
\bq\label{1Dmodel}\tag{AC-1D}
\Aff^{1D}[u] := A(u_x,u_{xx}) = \left(u_x^2 \:u_{xx}\right)^{1/3} = f ,\quad x \in (-1,1),
\eq
as well as the parabolic PDE,
\[
u_t = \Aff^{1D}[u]-f,
\quad \text{ for } (x,t) \in (-1,1) \times (0,\infty).
\]
along with initial and boundary conditions. In order to do so we need to build elliptic discretizations for $\Aff^{1D}$, which is achieved by studying the nonlinear function $A(p,q)$. A naive approach would suggest to simply substitute the elliptic discretizations for $\Abs{u_x}$ and $u_{xx}$ into $A(p,q)$. However, this is not possible since $A(p,q) \not\in ND(\R^2)$: $dA/dp  = 2/3(q/p)^{1/3}$ and so $A$ fails to be non-decreasing when $pq < 0$. 

Our approach is the following. First, in \autoref{subsec:elliptic1D}, we write $A(p,q)$ as a sum of two nondecreasing functions in terms of $\Abs{p}$, $-\Abs{p}$, $q$. These terms are replaced by $\abs{u^h_x}$, $-\abs{u^h_x}$, $u_{xx}^h$ which are elliptic and consistent and thus a consistent and elliptic discretization for $\Aff^{1D}$ is build. Then, in \autoref{subsec:Lipschitz1D}, we present a Lipschitz regularization of the function $A(p,q)$ and proceed similarly. The Lipschitz regularization is needed to build a provably convergent explicit scheme for the parabolic PDE as discussed in \autoref{sec:convergencetheory}. Finally, in \autoref{subsec:convergence1D}, we present the convergence results.

\subsection{An elliptic discretization of the one dimensional operator}\label{subsec:elliptic1D}


In the next lemma, we decompose $A(p,q)$ into the sum of two nondecreasing functions.
\begin{lemma}\label{lemma:aux_elliptic}
Define $A^+(p,q) = A(p^+,q^+)$ and $A^-(p,q) = A(p^-,q^-)$. Then $A^+ \in ND^+(\R^2)$, $A^- \in ND^-(\R^2)$ and 
\[
-A(p,q) = A(p,-q)  = A^+(\abs{p},-q) + A^-(-\abs{p},-q),
\quad 
\text{ for all } p,q
\]
\end{lemma}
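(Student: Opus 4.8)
The plan is to use the identity $A(p,q)=(p^{2}q)^{1/3}=\abs{p}^{2/3}\,q^{1/3}$, where throughout $t\mapsto t^{1/3}$ denotes the \emph{real} cube root $\sgn(t)\abs{t}^{1/3}$, an odd increasing bijection of $\R$. With this convention $A$ is even in $p$ (it depends on $p$ only through $\abs p$), is odd in $q$, and is the product of the nonnegative factor $\abs p^{2/3}$ with the odd increasing factor $q^{1/3}$. These three observations drive the whole argument, so I would record them first.

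Next I would verify the two membership claims. For $A^{+}$: since $A^{+}(p,q)=A(p^{+},q^{+})=(p^{+})^{2/3}(q^{+})^{1/3}$ is a product of two nonnegative factors, each of which is a composition of nondecreasing maps ($t\mapsto t^{+}$ followed by $t\mapsto t^{2/3}$, resp.\ $t\mapsto t^{1/3}$, on $[0,\infty)$) of its own variable, and a product of nonnegative nondecreasing functions is nondecreasing, we get $A^{+}\in ND^{+}(\R^{2})$. For $A^{-}$, rather than repeat the sign bookkeeping, I would use the symmetry $A^{-}(p,q)=A(p^{-},q^{-})=-A(-p^{-},-q^{-})=-A^{+}(-p,-q)$, which follows from oddness of $A$ in $q$, evenness in $p$, and the identities $-p^{-}=(-p)^{+}$, $-q^{-}=(-q)^{+}$. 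Since $A^{+}$ is nonnegative and nondecreasing, $(p,q)\mapsto A^{+}(-p,-q)$ is nonnegative and nonincreasing, hence its negative lies in $ND^{-}(\R^{2})$.

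Finally I would check the displayed identity. The first equality $-A(p,q)=A(p,-q)$ is immediate from oddness of the cube root. For the second, because $\abs p\ge 0$ and $-\abs p\le 0$ the truncations simplify, giving $A^{+}(\abs p,-q)+A^{-}(-\abs p,-q)=\big(\abs p^{2}(-q)^{+}\big)^{1/3}+\big(\abs p^{2}(-q)^{-}\big)^{1/3}=\abs p^{2/3}\big[((-q)^{+})^{1/3}+((-q)^{-})^{1/3}\big]$; the bracket equals $(-q)^{1/3}$ since for any real $s$ exactly one of $s^{+},s^{-}$ vanishes while the other equals $s$, and $0^{1/3}=0$. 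Hence the sum is $\abs p^{2/3}(-q)^{1/3}=(p^{2}(-q))^{1/3}=A(p,-q)$, as required. The computation is entirely elementary; the only thing that needs care — and that I would flag explicitly at the start — is the real cube root convention, since every step (oddness in $q$, the factorization $\abs p^{2/3}q^{1/3}$, and the splitting $((-q)^{+})^{1/3}+((-q)^{-})^{1/3}=(-q)^{1/3}$) relies on it.
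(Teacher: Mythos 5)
Your proof is correct, and its overall shape matches the paper's: verify the displayed identity by an elementary sign analysis in $q$, then verify the monotonicity claims separately. Where you differ is in how the monotonicity is established. The paper computes the partial derivatives $A^+_p = \tfrac{2}{3}(q^+/p^+)^{1/3}$ and $A^+_q = \tfrac{1}{3}(p^+/q^+)^{2/3}$ and observes they are nonnegative; these formulas are undefined on the axes $p^+=0$ or $q^+=0$, so that argument implicitly relies on continuity to extend the conclusion across the non-differentiable set. Your route — writing $A^+(p,q) = (p^+)^{2/3}(q^+)^{1/3}$ as a product of two nonnegative, nondecreasing functions of separate variables, and then obtaining $A^-$ from $A^+$ via the symmetry $A^-(p,q) = -A^+(-p,-q)$ — is calculus-free and sidesteps that issue entirely, at the cost of having to record the cube-root conventions ($A$ even in $p$, odd in $q$) up front, which you rightly flag. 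Your verification of the decomposition via $\left((-q)^+\right)^{1/3} + \left((-q)^-\right)^{1/3} = (-q)^{1/3}$ is just a compact repackaging of the paper's two-case check on the sign of $q$. Both arguments are complete; yours is slightly more robust on the axes.
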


\begin{proof}
The fact that $A$ can be decomposed follows from checking two cases, depending on the sign of $q$.

Next we establish that $A^+ \in ND^+(\R^2)$ and  $A^- \in ND^-(\R^2)$.  First it is clear that $A^+ \geq 0$ and that $A^- \leq 0$. Compute the partial derivatives, 
\[
A^+_p = \frac{2}{3}\left(\frac{q^+}{p^+}\right)^{1/3} \geq 0,
\qquad 
A^+_q = \frac{1}{3}\left(\frac{p^+}{q^+}\right)^{2/3} \geq 0,
\]
So $A^+$ is nondecreasing in each variable, which is enough to show $A^+ \in ND$.  Similarly, consider $A^-(p,q)$. Again taking partial derivatives, we see that $A^-$ is nondecreasing in each variable, so $A^- \in ND$.
\end{proof}

\begin{lemma}\label{lemma:consistencyellipticAC1D}
Define the finite difference operator 
\begin{align}\tag*{$(AC)^{1D,e}$}\label{ellipticAff1D}
-\Aff^{1D,e}[u] 
= 
A^+\left( \Abs{u_x^h}^+,  -u_{xx}^h  \right) + A^-\left( -\Abs{u_x^h}^-,-u_{xx}^h  \right )
\end{align}
where the finite difference operators involved are defined above. Then  $-\Aff^{1D,e}$ is elliptic and consistent with the $-\Aff^{1D}$.
\end{lemma}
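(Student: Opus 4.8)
The plan is to verify the two claimed properties — ellipticity and consistency — separately, since they follow from quite different mechanisms. For ellipticity, I would invoke the composition principle already used repeatedly in the excerpt: a non-decreasing function composed with elliptic finite difference operators yields an elliptic finite difference operator. By Lemma~\ref{lemma:aux_elliptic} we know $A^+ \in ND^+(\R^2)$ and $A^- \in ND^-(\R^2)$, hence both are in $ND(\R^2)$. The arguments fed into them are $\Abs{u_x^h}^+$, $-\Abs{u_x^h}^-$, and $-u_{xx}^h$. By Example~\ref{lemma:basic_elliptic} the operators $\Abs{u_x^h}^+$ and $-\Abs{u_x^h}^-$ are elliptic, and $-u_{xx}^h$ is elliptic by the standard remark that $-u_{xx}^h$ is elliptic. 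Therefore each summand $A^+(\Abs{u_x^h}^+,-u_{xx}^h)$ and $A^-(-\Abs{u_x^h}^-,-u_{xx}^h)$ is a non-decreasing function of elliptic operators, hence elliptic; and since sums of elliptic operators are elliptic, $-\Aff^{1D,e}$ is elliptic. I would state this chain explicitly but without belaboring the monotonicity-of-composition lemma, which is standard in this framework.

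For consistency, the key point is that Lemma~\ref{lemma:aux_elliptic} gives the \emph{exact} algebraic identity
\[
-A(p,q) = A^+(\abs{p},-q) + A^-(-\abs{p},-q) \quad \text{for all } p,q,
\]
so the finite difference operator is, by design, built to reproduce $-\Aff^{1D}[u] = -A(u_x,u_{xx})$ in the limit. Concretely, fix a smooth $\phi$ and a point $x$. If $\phi_x(x)\neq 0$, then for $h$ small and $y$ near $x$ all three difference quotients converge: $\Abs{\phi_x^h}^+(y) \to \abs{\phi_x(x)}$, $-\Abs{\phi_x^h}^-(y) \to -\abs{\phi_x(x)}$, and $-\phi_{xx}^h(y) \to -\phi_{xx}(x)$. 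Since $A^+$ is continuous on $\R^2$ and $A^-$ is continuous on $\R^2$ (the cube-root composed with the positive/negative parts is continuous, with $A^\pm=0$ at the origin), passing to the limit inside $A^+$ and $A^-$ and using the identity from Lemma~\ref{lemma:aux_elliptic} with $p = \phi_x(x)$, $q = \phi_{xx}(x)$ gives exactly $-A(\phi_x(x),\phi_{xx}(x)) = -\Aff^{1D}[\phi](x)$, which is the required consistency.

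The one genuine subtlety — and what I expect to be the main obstacle — is consistency at points where $\phi_x(x) = 0$, i.e. on the degenerate set of the operator. There the difference quotients $\Abs{\phi_x^h}^+$ and $-\Abs{\phi_x^h}^-$ still converge to $0$, and by continuity of $A^\pm$ at points of the form $(0,\cdot)$ we get $A^+(0,-\phi_{xx}(x)) + A^-(0,-\phi_{xx}(x)) = 0$, matching $-\Aff^{1D}[\phi](x) = -(0\cdot\phi_{xx}(x))^{1/3} = 0$. So the argument goes through provided one checks that $A^+$ and $A^-$ are indeed continuous (not merely $C^1$ off the axes); this is immediate from $0 \le A^+(p,q) = (p^+(q^+)^{1/2})^{2/3}\cdot\!$-type bounds showing $A^+(p,q)\to 0$ as $p^+\to0$ or $q^+\to0$, and similarly for $A^-$. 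I would make this continuity remark explicitly, since it is the only place where the naive ``plug in and take limits'' reasoning could fail, and then conclude by citing the generalized chain rule / the fact that each building-block discretization is itself consistent.
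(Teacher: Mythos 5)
Your proposal is correct and follows essentially the same route as the paper: ellipticity via composing the nondecreasing functions $A^{\pm}$ from Lemma~\ref{lemma:aux_elliptic} with the elliptic building blocks $\Abs{u_x^h}^+$, $-\Abs{u_x^h}^-$, $-u_{xx}^h$, and consistency via the exact decomposition identity together with consistency of each constituent scheme. The paper dispatches consistency in one sentence, whereas you also verify continuity of $A^{\pm}$ and the degenerate case $\phi_x(x)=0$ explicitly; these are correct refinements of, not departures from, the paper's argument.
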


\begin{proof}
The operators $|u_x^h|^+$, $-|u_x^h|^-$, $-u_{xx}^h$ are elliptic. 
Then, due to Lemma \ref{lemma:aux_elliptic}, the operator $-\Aff^{1D,e}[u]$ consists of the composition of the nondecreasing functions $A^+$ and $A^-$ with the three preceding operators, with the first two taking the place of $\abs{p}$ and $-\abs{p}$ and the last one taking the place of $-q$. Since the operators are elliptic and the functions are nondecreasing, $-\Aff^{1D,e}[u]$ is elliptic.
Consistency follows from the consistency of each of the schemes used. 
\end{proof}

\begin{remark}[Accuracy]\label{rmk:accuracy1D}
One challenge in forming a discretization of \eqref{1Dmodel} is that the accuracy breaks down near $u_{xx} = 0$.  
For example, if we use second order accurate approximations of $u_{xx}$, the accuracy of finite differences for the operator is only $\bO(h^{2/3})$. 
Consider the expression
\[
A(p+h^l,q+h^2) = ((p + h^k)^2(q+ h^2))^{1/3}
\]
where $k = 1$ or $2$.  
If $p\not=0, q=0$ we get
\[
A(p+h^k,q+h^2) = h^{2/3} (p + h^l)^{2/3} = \bO(h^{2/3}).
\]
So regardless of the accuracy of the discretization of the $u_x$ term, the overall accuracy of the scheme cannot be better than $\bO(h^{2/3})$.  In fact, a similar argument shows that the order of accuracy is $2k/3$ near $p=0, q\not=0$.  Our elliptic discretization, which uses $k=1$, will have order of accuracy $2/3$.  
\end{remark}

\subsection{Lipschitz regularization of the one dimensional operator}\label{subsec:Lipschitz1D}

The function $A(p,q)$ fails to be Lipschitz continuous near the axis $p=0$, $q=0$. Hence the elliptic scheme presented in the previous section is not Lipschitz, a property that is required in order to build a monotone convergent scheme for the time dependent problem. Thus, using the notation for the sign function  $\sgn(q) = q/\abs{q}$ for $q\not=0$ and $\sgn(0) = 0$ otherwise, we regularize $A(p,q)$ as follows.

\begin{definition}Define for $K= K(\delta), L = L(\delta) > 0$, the regularized function 
\bq\label{Adelta}
A^\delta(p,q) =	\sgn(q)\min(\abs{A(p,q)},K\abs{p},L\abs{q}).
\eq
\end{definition}

Naturally, we can then define the regularized PDE operator as
\[
\Aff^{1D,\delta}[u] = A^\delta(u_x,u_{xx}).
\]

Defining an elliptic and consistent scheme for $\Aff^{1D,\delta}[u]$ is accomplished by noticing that the discretization \ref{ellipticAff1D} generalizes when we replace $A$ with the regularized version $A^\delta$ in each term: just like for $A(p,q)$ in Lemma \ref{lemma:aux_elliptic}, we can decompose $A^\delta(p,q)$ into the sum of two nondecreasing functions. This is achieved in the following lemma.

\begin{lemma}
Define $A^{\delta,+}(p,q) = A^\delta(p^+,q^+)$ and $A^{\delta,-}(p,q) = A^\delta(p^-,q^-)$. Then
\[-A^\delta(p,q)  = A^{\delta,+}(\abs{p},-q)+A^{\delta,-}(-\abs{p},-q),\]
where $A^{\delta,+} \in ND^+(\R^2)$ and $A^{\delta,-} \in ND^-(\R^2)$.
\end{lemma}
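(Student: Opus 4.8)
The plan is to follow the two-step template from the proof of Lemma~\ref{lemma:aux_elliptic}: first derive the additive decomposition by a case split on $\sgn(q)$, and then establish monotonicity by writing $A^\delta$ restricted to a single closed quadrant as a nondecreasing map composed with nondecreasing functions. The useful preliminary observation is that, since $A(p,q)=(p^2q)^{1/3}$, one has $\abs{A(p,q)}=\abs{p}^{2/3}\abs{q}^{1/3}$ and $\sgn(A(p,q))=\sgn(q)$, so that
\[
A^\delta(p,q)=\sgn(q)\,\min\!\left(\abs{p}^{2/3}\abs{q}^{1/3},\,K\abs{p},\,L\abs{q}\right).
\]

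For the decomposition, I would expand $A^{\delta,+}(\abs{p},-q)=A^\delta(\abs{p},(-q)^+)$ and $A^{\delta,-}(-\abs{p},-q)=A^\delta(-\abs{p},(-q)^-)$ and verify the identity $-A^\delta(p,q)=A^{\delta,+}(\abs{p},-q)+A^{\delta,-}(-\abs{p},-q)$ in the three cases $q>0$, $q<0$, $q=0$. If $q>0$, then $(-q)^+=0$ and $A^\delta(\cdot,0)=0$, so the first summand vanishes, while $A^\delta(-\abs{p},-q)=-\min(\abs{p}^{2/3}q^{1/3},K\abs{p},Lq)=-A^\delta(p,q)$, which gives the identity; if $q<0$ the roles of the two summands are interchanged; and if $q=0$ all three terms vanish. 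This is exactly the two-case check indicated in Lemma~\ref{lemma:aux_elliptic}, now with the truncation terms $K\abs{p}$ and $L\abs{q}$ carried along without any difficulty.

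For the monotonicity, the key point is that on the closed first quadrant $\{p\ge 0,\ q\ge 0\}$ one has $A^\delta(p,q)=\min(p^{2/3}q^{1/3},Kp,Lq)$ --- the formula remains valid on the axes because $Kp\ge 0$ and $Lq\ge 0$ --- and each of the three functions $p^{2/3}q^{1/3}$, $Kp$, $Lq$ is nonnegative and nondecreasing in each variable there. Since the pointwise minimum of nondecreasing functions is nondecreasing, $A^\delta$ is nonnegative and nondecreasing on that quadrant. As $(p,q)\mapsto(p^+,q^+)$ is a nondecreasing map of $\R^2$ into the closed first quadrant, the composition $A^{\delta,+}(p,q)=A^\delta(p^+,q^+)$ lies in $ND^+(\R^2)$. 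The argument for $A^{\delta,-}$ is the mirror image: writing $p=-a$, $q=-b$ with $a,b\ge 0$, one has $A^\delta(-a,-b)=-\min(a^{2/3}b^{1/3},Ka,Lb)$, which is nonpositive and nondecreasing in $(p,q)=(-a,-b)$ on the closed third quadrant, so precomposing with the nondecreasing map $(p,q)\mapsto(p^-,q^-)$ into the third quadrant yields $A^{\delta,-}\in ND^-(\R^2)$.

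All of the computations are elementary, and I anticipate no genuine obstacle. The only place calling for care is resolving the $\sgn(q)$ factor and the absolute values consistently on each quadrant and on the coordinate axes, so that $A^\delta$ really does coincide there with the piecewise-smooth expressions used in the monotonicity step; this is pure bookkeeping.
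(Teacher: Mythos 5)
Your proposal is correct and follows essentially the same route as the paper: the same case split on the sign of $q$ for the additive decomposition, and the same observation that on each closed quadrant $A^\delta$ is a minimum (resp.\ maximum) of nondecreasing pieces, so that precomposing with $(p,q)\mapsto(p^\pm,q^\pm)$ gives the claimed monotonicity. The only cosmetic difference is that the paper writes $A^{\delta,+}=\min(A^+,Kp^+,Lq^+)$ and reuses the monotonicity of $A^+$ already established in Lemma~\ref{lemma:aux_elliptic}, whereas you re-derive the quadrant monotonicity directly.
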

\begin{proof}
To prove the decomposition of $A^\delta$ we have to consider two cases: $q \geq 0$ and $q < 0$.

Suppose first that $q \geq 0$. Then $A^{\delta,+}(\abs{p},-q) = 0$ since $(-q)^+ = 0$. Therefore
\begin{align*}
A^{\delta,+}(\abs{p},-q) + A^{\delta,-}(-\abs{p},-q)		& = A^\delta(-\abs{p},-q)\\
	& = -\sgn(q)\min\left(\abs{A(p,q)},K\abs{p},L\abs{q}\right)\\
	& = -A^\delta(p,q).
\end{align*}

Assume now that $q < 0$. Then $A^{\delta,-}(-\abs{p},-q) = 0$ since $(-q)^- = 0$. Hence
\begin{align*}
A^{\delta,+}(\abs{p},-q) + A^{\delta,-}(-\abs{p},-q)		& = A^\delta(\abs{p},-q)\\
	& = \sgn(-q)\min\left(\abs{A(p,q)},K\abs{p},L\abs{q}\right)\\
	& = -A^\delta(p,q).
\end{align*}

To show that $A^{\delta,+}$ is nondecreasing, we start by rewriting it as
\[
A^{\delta,+}(p,q) = \min\left(A^+(p,q),Kp^+,Lq^+\right).
\]
The result then follows by noticing that $\min$, $A^+$, $Kp^+$, $Lq^+$ are all nondecreasing. Similarly, $A^{\delta,-}$ is also nondecreasing, which follows from rewriting it as
\[
A^{\delta,-}(p,q) = \max\left(A^-(p,q),Kp^-,Lq^-\right).
\]
and noticing that $\max$, $A^-$, $Kp^-$, $Lq^-$ are all nondecreasing.
\end{proof}

Ellipticity and consistency of the regularized scheme with respect to $-\Aff^{1D,e,\delta}[u]$ follows now easily, just like in Lemma \ref{lemma:consistencyellipticAC1D}. For this reason we omit the proof.

\begin{lemma}\label{lemma:1Delliptic}
For $K = K(\delta)$, $L=L(\delta) > 0$, define the finite difference scheme
\bq\label{elliptic_scheme_regular1D}\tag*{$(AC)^{1D,e,\delta}$}
-\Aff^{1D,e,\delta}[u] = A^{\delta,+}\left( \Abs{u_x^h}^+, -u_{xx}^h \right) + A^{\delta,-}\left( -\Abs{u_x^h}^-, -u_{xx}^h \right ).
\eq
Then $-\Aff^{1D,e,\delta}[u]$ is elliptic and consistent with $-\Aff^{1D,\delta}[u]$.
\end{lemma}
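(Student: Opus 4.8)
The plan is to follow the template of the proof of Lemma~\ref{lemma:consistencyellipticAC1D}, substituting the decomposition of $A^\delta$ from the preceding lemma for the decomposition of $A$ provided by Lemma~\ref{lemma:aux_elliptic}.

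For ellipticity, recall that the three building blocks $\Abs{u_x^h}^+$, $-\Abs{u_x^h}^-$ (Example~\ref{lemma:basic_elliptic}) and $-u_{xx}^h$ are elliptic finite difference operators. By the preceding lemma, $A^{\delta,+}\in ND^+(\R^2)$ and $A^{\delta,-}\in ND^-(\R^2)$, and $-\Aff^{1D,e,\delta}[u]$ is exactly the sum of the composition of $A^{\delta,+}$ with the elliptic pair $\left(\Abs{u_x^h}^+,-u_{xx}^h\right)$ --- the first slot in the role of $\abs{p}$, the second in the role of $-q$ --- and the analogous composition of $A^{\delta,-}$ with $\left(-\Abs{u_x^h}^-,-u_{xx}^h\right)$. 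Composing a nondecreasing function with elliptic operators yields an elliptic operator, and sums of elliptic operators are elliptic; hence $-\Aff^{1D,e,\delta}[u]$ is elliptic.

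For consistency, fix a smooth $\phi$ and a point $x$, and let $h\to 0$, $y\to x$. The operators $\Abs{u_x^h}^+$, $-\Abs{u_x^h}^-$ and $-u_{xx}^h$ are consistent with $\abs{\phi_x}$, $-\abs{\phi_x}$ and $-\phi_{xx}$ respectively. The functions $A^{\delta,+}(p,q)=\min\left(A^+(p,q),Kp^+,Lq^+\right)$ and $A^{\delta,-}(p,q)=\max\left(A^-(p,q),Kp^-,Lq^-\right)$ are continuous on all of $\R^2$; the only delicate point, continuity across $q=0$, holds because the term $L\abs{q}$ inside the minimum defining $A^\delta$ forces the value to $0$ there, overriding the discontinuity of $\sgn$. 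Hence the limits may be taken inside, and using the decomposition identity from the preceding lemma,
\begin{align*}
-\Aff^{1D,e,\delta}[\phi](y) &\;\longrightarrow\; A^{\delta,+}\!\left(\abs{\phi_x(x)},-\phi_{xx}(x)\right)+A^{\delta,-}\!\left(-\abs{\phi_x(x)},-\phi_{xx}(x)\right)\\
&= -A^\delta\!\left(\phi_x(x),\phi_{xx}(x)\right) = -\Aff^{1D,\delta}[\phi](x),
\end{align*}
which is precisely the consistency requirement.

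Every step is a direct appeal to a fact already in hand, so I do not anticipate a genuine obstacle; the one point that warrants explicit care is exactly the global continuity of $A^{\delta,\pm}$ (and of $A^\delta$) across the axis $q=0$, which is what legitimises the limit passage above. I would not claim any order of accuracy: the truncation built into $A^\delta$ means only consistency with the regularized operator $-\Aff^{1D,\delta}$ can hold, and even apart from that the accuracy would be limited to $\bO(h^{2/3})$, as in Remark~\ref{rmk:accuracy1D}.
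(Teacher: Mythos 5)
Your proof is correct and follows exactly the route the paper intends: the paper omits the proof, stating only that it proceeds ``just like in Lemma~\ref{lemma:consistencyellipticAC1D}'', and your argument is precisely that template applied with the decomposition of $A^\delta$ in place of that of $A$. Your explicit check of the continuity of $A^{\delta,\pm}$ across $q=0$, and your remark that only consistency with the regularized operator $-\Aff^{1D,\delta}$ (not $-\Aff^{1D}$) is being claimed, are both correct and appropriately careful.
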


Proving consistency of the regularized scheme with respect to $-\Aff^{1D}[u]$ requires extra work as the parameters $K$, $L$ need to be chosen carefully. The next theorem summarizes the results proven in the lemmas that follow.

\begin{theorem}
Asssume $K = h^{-1/3}$ and $L = h^{-4/3}$. Let $x \in \Omega$ be a reference point on the grid and $\phi$ be a smooth                  function that is defined in a neighborhood of the grid. Then the scheme $\Aff^{1D,e,\delta}$ defined by \ref{elliptic_scheme_regular1D} is consistent with $\Aff^{1D}$ and has accuracy
\[
\Aff^{1D,e,\delta}[\phi](x) - \Aff^{1D}[\phi](x) = \bO(h^{2/3}).
\]
Moreover, $\Aff^{1D,e,\delta}$ is Lipschitz continuous with constant $C^h$ given by
\bq\label{Lipconst1D}
C^h = h^{-4/3} + 2h^{-10/3}.
\eq
\end{theorem}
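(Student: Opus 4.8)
The plan is to prove the theorem in three parts: (1) consistency and the $\bO(h^{2/3})$ accuracy bound, which splits into analyzing the behavior of the regularized operator near and away from the degenerate set $\{p=0\}\cup\{q=0\}$; (2) verification that the choices $K=h^{-1/3}$, $L=h^{-4/3}$ actually recover $\Aff^{1D}$ in the limit; and (3) the Lipschitz estimate giving the constant $C^h$ in \eqref{Lipconst1D}. Each of these is naturally organized as a lemma, as the text indicates ("the results proven in the lemmas that follow").

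For the consistency and accuracy claim, I would first fix the smooth test function $\phi$ and the reference point $x$, and write $p = \phi_x(x)$, $q = \phi_{xx}(x)$. Using the standard finite differences, $\Abs{u_x^h}^+$ and $-u_{xx}^h$ differ from $\abs{p}$ and $-q$ by $\bO(h)$ (and $\bO(h^2)$ respectively, though only the $\bO(h)$ matters here in light of Remark~\ref{rmk:accuracy1D}). The heart of the argument is to bound $A^\delta(p+\bO(h), q+\bO(h^2)) - A(p,q)$. I would case-split: if $p\neq 0$ and $q\neq 0$, then for $h$ small the min in \eqref{Adelta} is achieved by $\abs{A(p,q)}$ (since $K\abs{p}\sim h^{-1/3}\abs{p}\to\infty$ and $L\abs{q}\sim h^{-4/3}\abs{q}\to\infty$ dominate the bounded quantity $\abs{A}$), so $A^\delta$ agrees with $A$ and the error is controlled by the Lipschitz behavior of $A$ away from the axes — giving $\bO(h)$ there. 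The delicate regime is near the axes. When $q$ is small (or zero), the bound $L\abs{q}$ may be the active one, and one uses $\abs{A(p,q)} = \abs{p}^{2/3}\abs{q}^{1/3}$; the worst case, as flagged in Remark~\ref{rmk:accuracy1D}, is $q=0$, $p\neq 0$, where the discretized second derivative is $\bO(h^2)$ so the scheme returns something of size $(p^2\cdot\bO(h^2))^{1/3}=\bO(h^{2/3})$, matching $\Aff^{1D}[\phi](x)=0$ up to $\bO(h^{2/3})$. When $p$ is small, the active bound may be $K\abs{p}$; here $K\cdot\bO(h) = \bO(h^{2/3})$ with $K=h^{-1/3}$, again consistent. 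Collecting these cases, the overall error is $\bO(h^{2/3})$, and consistency in the Barles–Souganidis sense follows by letting $h\to 0$, $y\to x$.

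For the Lipschitz constant, I would work from the decomposition in Lemma~\ref{lemma:1Delliptic} and the explicit formulas $A^{\delta,+}(p,q)=\min(A^+(p,q),Kp^+,Lq^+)$ and $A^{\delta,-}(p,q)=\max(A^-(p,q),Kp^-,Lq^-)$. Since min and max of functions do not increase Lipschitz constants, it suffices to bound the Lipschitz constants of $A^\delta$ restricted appropriately: $A^\delta(p,q)$ is $K$-Lipschitz in $p$ (because $\abs{A^\delta}\le K\abs{p}$ forces the $p$-slope to be at most $K$ wherever $A$ is not active, and where $A$ is active $\abs{A_p}=\tfrac23(q/p)^{1/3}\le \tfrac13 K$-ish can be compared... actually one bounds it via the min structure directly) and $L$-Lipschitz in $q$. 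Then one must account for the finite-difference operators themselves: $\Abs{u_x^h}^+$ has Lipschitz constant $1/h$ with respect to the grid values (it is built from $D^\pm_x$ which have constant $1/h$), and $-u_{xx}^h$ has Lipschitz constant $2/h^2$ (the standard second difference has coefficients summing in absolute value to $4/h^2$, but measured against $\norm{v-w}_\infty$ one gets... I would compute this carefully — it is $4/h^2$ by the naive bound, though the $2/h^2$ in \eqref{Lipconst1D} suggests a sharper count). Composing via the chain rule for Lipschitz functions, $C^h \le K\cdot(1/h) + L\cdot(2/h^2) = h^{-1/3}\cdot h^{-1} + h^{-4/3}\cdot 2h^{-2} = h^{-4/3} + 2h^{-10/3}$, which is exactly \eqref{Lipconst1D}.

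The main obstacle I anticipate is the accuracy analysis near the degenerate set: one must carefully track which of the three terms in the min defining $A^\delta$ is active as a function of how $(p,q)$ scales with $h$, and confirm that in every regime the resulting error is $\bO(h^{2/3})$ and never worse — in particular that the interaction between the $\bO(h)$ error in $u_x^h$ and the cube-root singularity does not blow up. This requires a somewhat tedious but routine case analysis; the choices $K=h^{-1/3}$, $L=h^{-4/3}$ are precisely tuned so that $K\cdot h = h^{2/3}$ and $L\cdot h^2 \cdot$ (the cube root) stay at the $h^{2/3}$ threshold, so the bookkeeping, while delicate, should close. The Lipschitz bound, by contrast, is essentially a direct composition estimate once the constituent Lipschitz constants are pinned down.
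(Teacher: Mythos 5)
Your Lipschitz estimate is essentially the paper's own argument (Lemma~\ref{lemma:Lipconst1D}): the derivative bounds $\abs{A^\delta_p}\le K$, $\abs{A^\delta_q}\le L$ give $\abs{A^\delta(p_1,q_1)-A^\delta(p_2,q_2)}\le K\abs{p_1-p_2}+L\abs{q_1-q_2}$, and composing with the constants $1/h$ for $\abs{u_x^h}^\pm$ and $2/h^2$ for $-u_{xx}^h$ yields $C^h=K/h+2L/h^2$. (Your hesitation about $2/h^2$ versus $4/h^2$ resolves in favor of $2/h^2$ because the scheme's Lipschitz constant is measured against the differences $u(x)-u(\cdot)$, of which $-u_{xx}^h$ uses exactly two with coefficient $1/h^2$ each.)

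For the accuracy claim you take a genuinely different route, and it is here that your proposal has a gap. You propose a pointwise case analysis on which term of the min in \eqref{Adelta} is active, verify the extreme cases ($p\ne0,q\ne0$; $q=0,p\ne0$; $p$ small), and then assert that the intermediate regimes "should close." That is precisely the part you cannot defer: the interaction between the $\bO(h)$ perturbation of $p$ and the cube-root singularity as $(p,q)$ ranges over all scales relative to $h$ is the whole difficulty, and you name it as an obstacle without resolving it. The paper avoids the case analysis entirely by a uniform two-term decomposition (Lemmas~\ref{lemma:aux_regularization} and~\ref{lemma:accuracyregular1D}): first, $\Abs{\Aff^{1D,e,\delta}[\phi]-\Aff^{1D,\delta}[\phi]}\le K\cdot\bO(h)+L\cdot\bO(h^2)$, using the global Lipschitz bound on $A^\delta$ (valid for \emph{all} $(p,q)$, which is the entire point of the regularization); second, the explicit regularization error $\Abs{A^\delta(p,q)-A(p,q)}\le\max\bigl(\tfrac{4}{27K^2}\abs{q},\tfrac{2}{3\sqrt{3L}}\abs{p}\bigr)$, obtained by maximizing $A-A^\delta$ over the two regions where they differ. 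With $K=h^{-1/3}$, $L=h^{-4/3}$ every one of the four resulting terms is $\bO(h^{2/3})$ uniformly in $(p,q)$, with no tracking of active branches. To repair your argument you would either have to carry out the full multi-scale case analysis (including, e.g., $p\sim h^{s}$ for all $s\in(0,1)$) or, more efficiently, adopt the paper's decomposition, which also explains why the exponents $1/3$ and $4/3$ are forced: they balance $Kh=h^{2/3}$, $Lh^2=h^{2/3}$, $K^{-2}=h^{2/3}$, and $L^{-1/2}=h^{2/3}$ simultaneously.
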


We start by showing that our regularization of $A$ is indeed Lipschitz continuous

\begin{lemma}\label{lemma:aux_regularization}
Suppose  $K\sqrt{L} \geq 1$. Then 
\[
\Abs{\frac{d}{dp} A^\delta(p,q)}  \leq K
\qquad
\Abs{\frac{d}{dq} A^\delta(p,q)}  \leq L
\]
where the derivatives exist, and $A^\delta(p,q)$ is Lipschitz continuous. Moreover,
\[
\Abs{A^\delta(p,q) - A(p,q)} \leq \max\left(\frac{4}{27K^2} \abs{q},\frac{2}{3\sqrt{3L}}\abs{p}\right), 
\quad 
\text{ for all } p,q.
\]

\end{lemma}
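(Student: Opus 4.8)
The plan is to analyze the regularized function $A^\delta(p,q) = \sgn(q)\min(\abs{A(p,q)}, K\abs{p}, L\abs{q})$ directly by cases, exploiting the structure as a minimum of three functions. First I would establish the Lipschitz bounds on the partial derivatives. By symmetry (the formula is odd in $q$ and even in $p$), it suffices to treat $p \geq 0$, $q > 0$, where $A^\delta(p,q) = \min(A(p,q), Kp, Lq)$. Where the derivative exists, it equals the derivative of whichever of the three functions achieves the minimum. The derivatives of $Kp$ and $Lq$ are exactly $K$ and $L$ respectively, which already satisfy the claimed bounds, so the only work is on the region where $A(p,q) = (p^2 q)^{1/3}$ is the active term. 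There $A_p = \tfrac{2}{3}(q/p)^{1/3}$ and $A_q = \tfrac{1}{3}(p/q)^{2/3}$. The point is that in this region we have $A(p,q) \leq Kp$ and $A(p,q) \leq Lq$, i.e. $(p^2q)^{1/3} \leq Kp$ gives $q/p \leq K^3$ hence... wait, more carefully: $q \leq K^{3/2} p^{1/2}$-type inequalities. From $A \le Kp$: $p^2 q \le K^3 p^3$, so $q \le K^3 p$, giving $A_p = \tfrac23 (q/p)^{1/3} \le \tfrac23 K < K$. From $A \le Lq$: $p^2 q \le L^3 q^3$, so $p^2 \le L^3 q^2$, i.e. $p/q \le L^{3/2}$, giving $A_q = \tfrac13 (p/q)^{2/3} \le \tfrac13 L^{} \cdot$... $(L^{3/2})^{2/3} = L$, so $A_q \le \tfrac13 L < L$. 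So the derivative bounds hold term by term, and the hypothesis $K\sqrt L \ge 1$ is not even needed for these two bounds — it will be needed instead to guarantee that $A^\delta$ is genuinely continuous/Lipschitz as a whole (so that the three pieces glue: the min of the three functions is Lipschitz with constant $\max(K,L)$ once each piece is, which is automatic, but one should check the pieces are compatible, e.g. that $Kp$ and $Lq$ actually intersect the graph of $A$ appropriately — this is where $K\sqrt L \ge 1$ enters). I would spell out that $Kp = Lq$ on the ray $q = (K/L)p$, and along that ray $A(p,q) = (p^2 \cdot (K/L)p)^{1/3} = (K/L)^{1/3} p$, and one checks $(K/L)^{1/3} \ge \min(K, L/(K/L)) $-type consistency using $K\sqrt L \ge 1$; essentially $K\sqrt{L}\ge 1$ ensures $K \le L \cdot$ (something) so that the $\min$ is attained in the "expected" nested order.

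Next, for the approximation estimate $\abs{A^\delta(p,q) - A(p,q)} \leq \max\!\big(\tfrac{4}{27K^2}\abs{q}, \tfrac{2}{3\sqrt{3L}}\abs{p}\big)$, again reduce to $p \ge 0, q > 0$. If $A(p,q) \le \min(Kp, Lq)$ then $A^\delta = A$ and the difference is zero, so assume the min is one of $Kp$, $Lq$, i.e. $A^\delta(p,q) = \min(Kp, Lq) < A(p,q)$. Then $\abs{A^\delta - A} = A(p,q) - \min(Kp,Lq) \le A(p,q) - Kp$ (if $Kp$ is the active one) or $\le A(p,q) - Lq$ (if $Lq$ is). In the first subcase $Kp \le Lq$ and $Kp < A = (p^2q)^{1/3}$, so $q > K^3 p$; I want to bound $(p^2 q)^{1/3} - Kp$ by $\tfrac{4}{27K^2} q$. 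Setting $t = (q/p)^{1/3} > K$, the quantity is $p(t - K)$ and $q = p t^3$, so I need $p(t-K) \le \tfrac{4}{27K^2} p t^3$, i.e. $27 K^2 (t - K) \le 4 t^3$ for all $t > K$ — a single-variable calculus inequality, provable by checking the function $g(t) = 4t^3 - 27K^2 t + 27K^3$ is nonnegative for $t \ge 0$ (it has a double root structure at $t = 3K/2$... indeed $g(3K/2) = 4\cdot 27K^3/8 - 27K^2\cdot 3K/2 + 27K^3 = \tfrac{27}{2}K^3 - \tfrac{81}{2}K^3 + 27K^3 = 0$, and $g'(3K/2) = 12\cdot 9K^2/4 - 27K^2 = 0$, so $3K/2$ is a double root, confirming $g \ge 0$). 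The subcase with $Lq$ active is symmetric in structure: set $s = (p/q)^{2/3}$, the difference is $q(s^{1/2} \cdot$ ... $) - Lq$, and one reduces to $s^{1/2} q - $ ... leading to the $\tfrac{2}{3\sqrt{3L}}$ constant via an analogous one-variable optimization. I would present the two model inequalities cleanly and note the constants come out exactly as stated.

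The main obstacle is bookkeeping rather than depth: correctly enumerating which of the three terms in the $\min$ is active in each region, verifying that the glueing along the boundaries between regions is consistent (this is precisely the role of $K\sqrt L \ge 1$, and I expect it must be invoked to rule out a pathological ordering where, say, $A(p,q)$ is squeezed out entirely or the two linear caps cross "inside" the region where $A$ is smaller), and getting the sharp constants $\tfrac{4}{27K^2}$ and $\tfrac{2}{3\sqrt{3L}}$ from the calculus inequalities with the right exponents. I would organize the write-up as: (1) reduce to the first quadrant by symmetry; (2) prove the derivative bounds term-by-term as above; (3) record the one-variable inequality $4t^3 \ge 27K^2(t-K)$ and its companion; (4) assemble the approximation bound by cases. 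No step requires more than elementary single-variable calculus once the case structure is fixed.
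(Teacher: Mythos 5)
Your proposal is correct and follows essentially the same route as the paper: identify the regions where each term of the $\min$ is active (with $K\sqrt{L}\ge 1$ ensuring the wedges $\abs{q}\ge K^3\abs{p}$ and $\abs{q}\le L^{-3/2}\abs{p}$ are properly ordered), bound the partial derivatives term-by-term using $q/p\le K^3$ and $p/q\le L^{3/2}$ on the region where $A$ is active, and obtain the error constants by a one-variable optimization in each clipped region. Your double-root factorization $4t^3-27K^2t+27K^3=4(t-3K/2)^2(t+3K)$ is just the paper's critical-point computation ($p=\tfrac{8}{27K^3}q$, resp.\ $q=(3L)^{-3/2}p$) in a different parameterization, and both yield the stated constants $\tfrac{4}{27K^2}$ and $\tfrac{2}{3\sqrt{3L}}$.
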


\begin{proof}
First we determine the sets where each term in the minimum is active. We claim that
\[
A^\delta(p,q) = 
\begin{cases}
Lq,				&  \abs{q} \leq L^{-3/2} \abs{p},\\
\sgn(q)K\abs{p},	&  \abs{q} \geq K^3\abs{p},\\
(p^2q)^{1/3}		& \text{ otherwise.}
\end{cases}
\]
Indeed, since $K\sqrt{L} \geq 1$, the claim follows from 
\[
\abs{q} \leq L^{-3/2} \abs{p} \Rightarrow K\abs{p} \geq L \abs{q}
\quad \text{and} \quad
\abs{q} \geq K^3\abs{p} \Rightarrow L\abs{q} \geq K \abs{p}.
\]
and
\[
L\abs{q} \leq \abs{A(p,q)} \Leftrightarrow \abs{q} \leq L^{-3/2} \abs{p}
\quad \text{and} \quad
K\abs{p} \leq \abs{A(p,q)} \Leftrightarrow K^3 \abs{p} \leq \abs{q}.
\]

We continue the proof by proving the derivate estimates. Computing the derivative with respect to $p$ gives
\[
\frac{d}{dp} A^\delta(p,q) =
\begin{cases}
0, &  \abs{q} \leq L^{-3/2} \abs{p}\\
\sgn(q)K\sgn(p), &  \abs{q} \geq K^3\abs{p}  \\
\frac 2 3 (qp^{-1})^{1/3} & \text{ otherwise } 
\end{cases}
\]
In the third case, since $\abs{q/p} \leq K^3$, the partial derivative is bounded by $\frac{2}{3} K$, and so we can conclude that $\abs{dA^\delta/dp} \leq K$.

Similarly, computing 
\[
\frac{d}{dq} A^\delta(p,q) =
\begin{cases}
	L, &  q \leq L^{-3/2} p \\
	0, &  q \geq K^3p  \\
	\frac 1 3 (pq^{-1})^{2/3} & \text{ otherwise } 
\end{cases}
\]
In the third case, since $\abs{p/q} \leq L^{3/2}$ the value is bounded by $\frac 1 3 L$, and the global bound holds.

Finally, we prove the estimate on the error introduced by the regularization. Since both $A$ and $A^\delta$ are even functions with respect to $p$ and odd with respect to $q$, we can assume without loss of generality that $p$ and $q$ are both positive. We have two cases to consider: $\abs{q} \geq K^3 \abs{p}$ and $\abs{q} \leq L^{-3/2} \abs{p}$ which is where $A$ and $A^\delta$ differ.

If $\abs{q} \geq K^3 \abs{p}$ then
\[
\Abs{A^\delta(p,q) - A(p,q)} \leq \frac{4}{27K^2} \abs{q}.
\]

In this case, $A^\delta(p,q) = Kp$ and so
\[
\frac{d}{dp}\left(A(p,q)-A^\delta(p,q)\right) = K-\frac{2}{3}\left(\frac{q}{p}\right)^{1/3} = 0 \Leftrightarrow \frac{8}{27} q = K^3 p \Leftrightarrow p = \frac{8}{27K^3} q
\]
where $\alpha = 2/3$. Hence
\[
\max_p \Abs{A^\delta(p,q) - A(p,q)} \leq \Abs{A^\delta\left(\frac{8}{27K^3} q,q\right)-A\left(\frac{8}{27K^3}q,q\right)} = \frac{4}{27K^2}\abs{q}.
\]

If $\abs{q} \leq L^{-3/2} \abs{p}$ then
\[
\Abs{A^\delta(p,q) - A(p,q)} \leq \frac{2}{3\sqrt{3L}}\abs{p}.
\]
	
In this case, $A^\delta(p,q) = Lq$ and so
\[
\frac{d}{dq}\left(A(p,q)-A^\delta(p,q)\right) = L-\frac{1}{3}\left(\frac{p}{q}\right)^{2/3} = 0 \Leftrightarrow q = (3L)^{-3/2} p.
\]
Hence
\[
\max_q \Abs{A^\delta(p,q) - A(p,q)} \leq \Abs{A^\delta\left(p,(3L)^{-3/2} p\right)-A\left(p,(3L)^{-3/2} p\right)} = \frac{2}{3\sqrt{3L}}\abs{p}.
\]

The result now follows straightforwardly.
\end{proof}

Now, we show that $\Aff^{1D,e,\delta}[u]$ is Lipschitz continuous.

\begin{lemma}\label{lemma:Lipconst1D}
$\Aff^{1D,e,\delta}[u]$ is Lipschitz continuous with constant
\[C^h = \frac{K}{h}+\frac{2L}{h^2}.\]
\end{lemma}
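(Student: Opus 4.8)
The plan is to combine two ingredients: the pointwise Lipschitz bounds on $A^\delta$ from Lemma~\ref{lemma:aux_regularization}, and elementary Lipschitz bounds on the individual finite difference operators appearing in \ref{elliptic_scheme_regular1D}. By Lemma~\ref{lemma:aux_regularization}, $A^\delta$ is Lipschitz with $\abs{\partial_p A^\delta}\le K$ and $\abs{\partial_q A^\delta}\le L$ off a null set of lines, so integrating along axis-parallel segments gives $\abs{A^\delta(p_1,q_1)-A^\delta(p_2,q_2)}\le K\abs{p_1-p_2}+L\abs{q_1-q_2}$ for all $p_i,q_i$; I will also use the trivial fact $A^\delta(p,0)=0$ for every $p$. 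For the difference operators, measured in the difference variables $v(\cdot)=u(x)-u(\cdot)$ as in the definition of $K^\dx$, the operators $\Abs{u_x^h}^+$ and $-\Abs{u_x^h}^-$ are $\max$ (resp.\ $\min$) of $D^-_x u$, $-D^+_x u$ and $0$, hence have Lipschitz constant $1/h$, while $u_{xx}^h=\tfrac1{h^2}\big((u(x-h)-u(x))+(u(x+h)-u(x))\big)$ has Lipschitz constant $2/h^2$.

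The structural observation that makes the constant come out to $K/h+2L/h^2$ (rather than twice that) is that, since $A^\delta(\cdot,0)=0$, at most one of the two summands in \ref{elliptic_scheme_regular1D} is active. Using $\Abs{u_x^h}^+\ge 0\ge -\Abs{u_x^h}^-$ together with the decomposition of $A^\delta$ established just before Lemma~\ref{lemma:1Delliptic}, one writes
\[
-\Aff^{1D,e,\delta}[u] = A^\delta\!\big(\Abs{u_x^h}^+,(-u_{xx}^h)^+\big) + A^\delta\!\big(-\Abs{u_x^h}^-,(-u_{xx}^h)^-\big),
\]
so the first summand vanishes when $u_{xx}^h\ge 0$ and the second when $u_{xx}^h\le 0$.

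Now fix grid functions $u,v$ and set $\Delta:=\norm{(u(x)-u(\cdot))-(v(x)-v(\cdot))}_\infty$; I would split into cases according to the signs of $u_{xx}^h u$ and $u_{xx}^h v$. If both are $\le 0$ (or both $\ge 0$), only one summand survives for each, and applying the Lipschitz bound on $A^\delta$ together with the bounds on $\Abs{u_x^h}^\pm$ and $u_{xx}^h$ gives $\abs{\Aff^{1D,e,\delta}[u]-\Aff^{1D,e,\delta}[v]}\le (K/h+2L/h^2)\Delta$ directly. The case that requires care — and is the main obstacle — is when $u_{xx}^h u$ and $u_{xx}^h v$ have opposite signs, say $u_{xx}^h u\le 0\le u_{xx}^h v$: then the first summand survives only for $u$ and the second only for $v$, and bounding the two pieces separately would double the constant. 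Instead I compare each surviving term to $A^\delta(\cdot,0)=0$, using only the $L$-Lipschitz dependence in the second slot, to get $\abs{\Aff^{1D,e,\delta}[u]}\le L\abs{u_{xx}^h u}$ and $\abs{\Aff^{1D,e,\delta}[v]}\le L\abs{u_{xx}^h v}$, whence
\[
\abs{\Aff^{1D,e,\delta}[u]-\Aff^{1D,e,\delta}[v]} \le L\big(\abs{u_{xx}^h u}+\abs{u_{xx}^h v}\big) = L\,\abs{u_{xx}^h u-u_{xx}^h v} \le \frac{2L}{h^2}\Delta,
\]
using $\abs{a}+\abs{b}=\abs{a-b}$ for $a\le 0\le b$. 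In every case the bound is at most $(K/h+2L/h^2)\Delta$, which is the claimed $C^h$. The crux is thus the opposite-sign case: one must exploit both that $A^\delta$ vanishes on $\{q=0\}$ (so the first-derivative contribution $K/h$ drops out there) and the cancellation $\abs{a}+\abs{b}=\abs{a-b}$, rather than estimating the two halves of \ref{elliptic_scheme_regular1D} independently.
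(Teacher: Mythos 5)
Your proof is correct and rests on the same two ingredients as the paper's: the bound $\abs{A^\delta(p_1,q_1)-A^\delta(p_2,q_2)}\le K\abs{p_1-p_2}+L\abs{q_1-q_2}$ from Lemma~\ref{lemma:aux_regularization}, and the Lipschitz constants $1/h$ and $2/h^2$ of $\Abs{u_x^h}^\pm$ and $u_{xx}^h$ in the difference variables. Where you go beyond the paper is in justifying why the constant is $K/h+2L/h^2$ rather than twice that: the paper applies the $A^\delta$ estimate and concludes with ``the result follows easily then,'' implicitly relying on the fact that at most one of the two summands in \ref{elliptic_scheme_regular1D} is active at any grid point, but it never addresses what happens when the active summand switches between the two grid functions being compared. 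Your treatment of that case --- using $A^\delta(\cdot,0)=0$ to discard the $K/h$ contribution and the identity $\abs{a}+\abs{b}=\abs{a-b}$ for $a\le 0\le b$ to recover the single factor $2L/h^2$ --- is exactly the missing step, and it is sound. So this is the paper's argument carried out in full rather than a different route; the only thing it buys is a complete justification of the stated constant, which the paper leaves to the reader.
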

\begin{proof}
Using the derivative estimates proved in Lemma \ref{lemma:aux_regularization} and the triangle inequality, we get
\begin{align*}
\Abs{A^\delta(p_1,q_1)-A^\delta(p_2,q_2)}	& \leq \Abs{A^\delta(p_1,q_1)-A^\delta(p_2,q_1)}+\Abs{A^\delta(p_2,q_1)-A^\delta(p_2,q_2)}\\
									& \leq \Abs{\frac{d}{dp}A^\delta(\tilde{p},q_1)}\abs{p_1-p_2} + \Abs{\frac{d}{dq}A^\delta(p_2,\tilde{q})}\abs{q_1-q_2}\\
									& \leq K\abs{p_1-p_2} + L\abs{q_1-q_2}.
\end{align*}
for some $\tilde{p},\tilde{q} \in \R$. Here, the schemes $\abs{u^h_x}^\pm$ take the place of $p_1,p_2$ and have Lipschitz constant of $1/h$. On the other hand, $q_1,q_2$ are replaced by $(-u_{xx}^h)^\pm$ which has Lipschitz constant $2/h^2$. The result follows easily then.
\end{proof}

Finally, we study how $K$ and $L$ can be chosen to ensure that $\Aff^{1D,e,\delta}[u]$ is consistent with $\Aff^{1D}[u]$.

\begin{lemma}\label{lemma:accuracyregular1D}Let $x \in \Omega$ be a reference point on the grid and $\phi$ be a twice differentiable function that is defined in a neighborhood of the grid. Assume $K = \bO(h^{-\alpha})$ and $L = \bO(h^{-\beta})$ such that $K\sqrt{L} \geq 1$, $\alpha \in (0,1)$ and $\beta \in (0,2)$. Then the scheme $\Aff^{1D,e,\delta}$ defined by \ref{elliptic_scheme_regular1D} is consistent with $\Aff^{1D}$ and has accuracy
\[
\Aff^{1D,e,\delta}[\phi](x) - \Aff^{1D}[\phi](x) = \bO\left(h^{1-\alpha} + h^{2-\beta}+h^{\min(2\alpha,\beta/2)}\right).
\]
Moreover, the optimal choice of $\alpha$ and $\beta$ is given by 
$\alpha = 1/3$ and $\beta = 4/3$, in which case the accuracy is $\bO(h^{2/3})$.
\end{lemma}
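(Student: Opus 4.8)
The plan is to split the error $\Aff^{1D,e,\delta}[\phi](x) - \Aff^{1D}[\phi](x)$ into two pieces using the triangle inequality: the regularization error $\Aff^{1D,e,\delta}[\phi] - \Aff^{1D,e}[\phi]$ (the difference between using $A^\delta$ and $A$ in the elliptic scheme) and the discretization error $\Aff^{1D,e}[\phi] - \Aff^{1D}[\phi]$ of the unregularized elliptic scheme. The latter is $\bO(h^{2/3})$ by the argument in Remark~\ref{rmk:accuracy1D} (using $k=1$ for the $u_x$ term) and, more carefully, it contributes the $h^{1-\alpha}$ term: the first-order accuracy of $\abs{u_x^h}^\pm$ enters $A$ with exponent $2/3$ near $q \neq 0$, giving error $h^{2/3}$ when $\alpha = 1/3$; away from the degenerate set the estimate is governed by how far $\phi$'s derivatives push into the region where the $K$ or $L$ cap is active.

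For the regularization error, I would use the bound from Lemma~\ref{lemma:aux_regularization}, $\Abs{A^\delta(p,q) - A(p,q)} \leq \max\left(\tfrac{4}{27K^2}\abs{q}, \tfrac{2}{3\sqrt{3L}}\abs{p}\right)$, applied at the grid-function arguments $p = \Abs{u_x^h}^\pm$ and $q = -u_{xx}^h$ evaluated at $\phi$. Since $\phi$ is smooth, $\abs{p}$ and $\abs{q}$ are bounded (up to $\bO(h)$) by $\abs{\phi_x(x)}$ and $\abs{\phi_{xx}(x)}$, so this error is $\bO(K^{-2}) + \bO(L^{-1/2}) = \bO(h^{2\alpha}) + \bO(h^{\beta/2})$, i.e.\ $\bO(h^{\min(2\alpha, \beta/2)})$. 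One must be slightly careful that $A^\delta$ is applied \emph{termwise} in the decomposition $A^{\delta,+} + A^{\delta,-}$, but since in each case one of the two summands vanishes (as shown in the lemma preceding Lemma~\ref{lemma:1Delliptic}) and the other equals $-A^\delta$ of the appropriate arguments, the termwise regularization error is controlled by the same bound.

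The remaining term $h^{2-\beta}$ comes from tracking the consistency error of $-u_{xx}^h$ more carefully: although $u_{xx}^h$ is second-order accurate, the cap $L\abs{q}$ with $L = \bO(h^{-\beta})$ means that when $\phi_{xx}(x) = 0$ the scheme sees $q = \bO(h^2)$ (or $\bO(h)$, depending on the stencil) and the capped value $Lq$ or the cube root $(p^2 q)^{1/3}$ contributes an error of order $h^{2}\cdot h^{-\beta}$ in the region where the $L$-cap is active. Collecting all three contributions gives the stated accuracy $\bO(h^{1-\alpha} + h^{2-\beta} + h^{\min(2\alpha,\beta/2)})$. Finally, optimizing: one wants all three exponents as large as possible, which requires balancing $1 - \alpha$ against $2\alpha$ (forcing $\alpha = 1/3$) and $2 - \beta$ against $\beta/2$ (forcing $\beta = 4/3$); checking that $\min(2\alpha,\beta/2) = \min(2/3, 2/3) = 2/3$ and $K\sqrt{L} = h^{-1/3} h^{-2/3} = h^{-1} \geq 1$ confirms the hypothesis of Lemma~\ref{lemma:aux_regularization} and yields the $\bO(h^{2/3})$ rate.

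The main obstacle I anticipate is the bookkeeping near the degenerate sets $\phi_x(x) = 0$ and $\phi_{xx}(x) = 0$: one has to argue case-by-case about which branch of $A^\delta$ (the $L$-cap, the $K$-cap, or the cube root) is active at the perturbed grid values $p + \bO(h)$, $q + \bO(h^2)$, and show the claimed error bound holds uniformly across the transition regions between branches. This is exactly the kind of Taylor-expansion-plus-casework that Remark~\ref{rmk:accuracy1D} sketches, but making it rigorous for the \emph{regularized} scheme — where the branch boundaries themselves move with $h$ — requires care that the worst case over all branches is still $\bO(h^{1-\alpha} + h^{2-\beta} + h^{\min(2\alpha,\beta/2)})$.
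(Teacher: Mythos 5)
Your decomposition is not the paper's, and the place where they differ is exactly where your argument goes wrong. The paper splits the error as $\bigl(\Aff^{1D,e,\delta}[\phi]-\Aff^{1D,\delta}[\phi]\bigr)+\bigl(\Aff^{1D,\delta}[\phi]-\Aff^{1D}[\phi]\bigr)$: the first piece is the discretization error of the \emph{regularized} operator, bounded by $K\,\bO(h)+L\,\bO(h^{2})=\bO(h^{1-\alpha}+h^{2-\beta})$ using the estimate $\abs{A^\delta(p_1,q_1)-A^\delta(p_2,q_2)}\le K\abs{p_1-p_2}+L\abs{q_1-q_2}$ from Lemma~\ref{lemma:Lipconst1D}, with $p_1-p_2=\bO(h)$ and $q_1-q_2=\bO(h^{2})$ the consistency errors of $\abs{u_x^h}^\pm$ and $u_{xx}^h$; the second piece is the regularization error at the \emph{continuum} arguments $(u_x,u_{xx})$, which Lemma~\ref{lemma:aux_regularization} bounds by $\bO(K^{-2}+L^{-1/2})=\bO(h^{\min(2\alpha,\beta/2)})$. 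You instead split off the regularization error at the discrete arguments (that half of your argument is fine, including the termwise observation) and are left with the discretization error of the \emph{unregularized} elliptic scheme, $\Aff^{1D,e}[\phi]-\Aff^{1D}[\phi]$. That quantity contains no $K$ and no $L$, so it cannot ``contribute the $h^{1-\alpha}$ term,'' and your derivation of the $h^{2-\beta}$ term from ``the region where the $L$-cap is active'' refers to a cap that simply does not exist in $\Aff^{1D,e}$. This portion of the proposal is not a proof; it is an attempt to force your decomposition to reproduce the paper's three terms, which it does not do.

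Your route is salvageable, but not by the reasoning you give, and not by citing Remark~\ref{rmk:accuracy1D}: that remark only exhibits points where the error is at least $h^{2/3}$ (a lower bound on the worst case), whereas you need the upper bound $\Aff^{1D,e}[\phi]-\Aff^{1D}[\phi]=\bO(h^{2/3})$. That upper bound is in fact true, via a global H\"older estimate such as $\abs{A(p_1,q)-A(p_2,q)}\le\abs{q}^{1/3}\abs{p_1-p_2}^{2/3}$ and $\abs{A(p,q_1)-A(p,q_2)}\le 2\abs{p}^{2/3}\abs{q_1-q_2}^{1/3}$, and since $\min(1-\alpha,2\alpha)\le 2/3$ the resulting bound $\bO(h^{2/3}+h^{\min(2\alpha,\beta/2)})$ implies the one stated in the lemma. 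So if you supply that H\"older estimate and delete the paragraphs purporting to produce $h^{1-\alpha}$ and $h^{2-\beta}$, you get a correct (and in some parameter ranges sharper) proof. The paper's splitting avoids this casework entirely, at the price of the hypotheses $\alpha\in(0,1)$, $\beta\in(0,2)$ needed to make $h^{1-\alpha}$ and $h^{2-\beta}$ vanish.
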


\begin{proof}
We showed in the previous lemma that
\[\Abs{A^\delta(p_1,q_1)-A^\delta(p_2,q_2)} \leq K\abs{p_1-p_2} + L\abs{q_1-q_2}.\]
Here, $\Abs{u_x^h}^\pm$ take the place of $p_1$, while $p_2$ is replaced by $\Abs{u_x}$. On the other hand, $q_1$ is replaced by $-u_{xx}^h$, while $q_2$ is replaced by $-u_{xx}$. The result follows from the consistent of the finite difference operators
\[\Abs{u_x^h}^\pm = \Abs{u_x} + \bO(h), \quad \left(-u_{xx}^h \right)^\pm = (-u_{xx})^\pm + \bO(h^2).\]
Hence
\[
\Aff^{1D,e,\delta}[\phi](x) - \Aff^{1D,\delta}[\phi](x) = \bO\left(h^{1-\alpha} + h^{2-\beta}\right).
\]

A direct application of Lemma \ref{lemma:aux_regularization}, where $u_x$ and $u_{xx}$ take the place of $p$ and $q$ respectively, leads to the estimate
\[\Abs{\Aff^{1D,\delta}[\phi] - \Aff^{1D}[\phi]} \leq \max\left(\frac{4}{27K^2} \abs{u_{xx}},\frac{2}{3\sqrt{3L}}\abs{u_x}\right).\]
Therefore, since $K = \bO(h^{-\alpha})$ and $L = \bO(h^{-\beta})$
\[
\Aff^{1D,e}[\phi](x) - \Aff^{1D}[\phi](x) = \bO\left(h^{\min(2\alpha,\beta/2)}\right).
\]
The accuracy of $\Aff^{1D,e,\delta}$ then follows from the equality
\[
\Aff^{1D,e,\delta}[\phi](x) - \Aff^{1D}[\phi](x) = \Aff^{1D,e,\delta}[\phi](x) - \Aff^{1D,\delta}[\phi] + \Aff^{1D,e}[\phi](x) - \Aff^{1D}[\phi](x)
\]

Finally, we observe that
\[\max(\min(1-\alpha,2\alpha)) = \max(\min(2-\beta,\beta/2)) = \frac{2}{3},\]
with the maximums being attained at $\alpha = 1/3$ and $\beta = 4/3$, thus justifying the optimal choice of $\alpha$ and $\beta$.
\end{proof}
\begin{remark}
As a result of the proof, we show as well that $\Aff^{1D,e,\delta}$ is consistent with $\Aff^{1D,\delta}$ with accuracy
\[
\Aff^{1D,e,\delta}[\phi](x) - \Aff^{1D,\delta}[\phi](x) = \bO\left(h^{1-\alpha} + h^{2-\beta}\right).
\]	
\end{remark}

\begin{remark}
With the optimal choice of $\alpha$ and $\beta$, the overall accuracy of $\Aff^{1D,e,\delta}$ and $\Aff^{1D,e}$ with respect to $\Aff^{1D}$ is the same (see Remark \ref{rmk:accuracy1D}), meaning that no accuracy is lost due to the regularization.
\end{remark}

\subsection{Convergence theorems for the one-dimensional model}\label{subsec:convergence1D}

Having proved the ellipticity and consistency of the schemes, the uniform convergence follows as discussed in \autoref{sec:convergencetheory}.

The first convergence result is for the elliptic problem, where there is no need for the regularized scheme.
\begin{theorem} Let $u(x)$ be the unique viscosity solution of $\Aff^{1D}[u] = f$ in $\Omega$, along with suitable boundary conditions. For each $h>0$,  let $u^{1D,e,h}$ be the uniformly bounded solution of $\Aff^{1D,e}[u] = f$. Then $u^{1D,e,h} \to u$ locally uniformly, as $h \to 0$.
\end{theorem}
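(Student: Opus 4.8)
The plan is to deduce this from the Barles--Souganidis convergence theorem, Theorem~\ref{BStheorem}, for which four ingredients are required: a comparison principle for viscosity solutions of the static equation $\Aff^{1D}[u] = f$; consistency of the scheme $\Aff^{1D,e}$ with $\Aff^{1D}$; monotonicity (ellipticity) of the scheme; and stability, in the form of a uniform (in $h$) bound on the family $\{u^{1D,e,h}\}_{h>0}$.

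Three of these are already available. Consistency and ellipticity of $\Aff^{1D,e}$ are precisely the content of Lemma~\ref{lemma:consistencyellipticAC1D}. Stability is the standing hypothesis that each $u^{1D,e,h}$ is uniformly bounded, independently of $h$; and existence and uniqueness of $u^{1D,e,h}$ follow from the discussion in Section~\ref{sec:convergencetheory}: after adding an arbitrarily small multiple of $u$ to the scheme, the discrete comparison principle holds and the associated Euler map is a strict contraction, whose fixed point is the desired grid function. For the comparison principle on the PDE side, I would note that $-\Aff^{1D}$ is degenerate elliptic (the one-dimensional analogue of Lemma~\ref{lem:degell}, via the representation \eqref{AffPQ1D}), and that the comparison principle from \cite{giga2006surface}, stated for the parabolic problem, carries over to the static equation $\Aff^{1D}[u]=f$ with the chosen boundary conditions, as recorded in the remark following the Comparison Principle.

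With these facts in place, the convergence argument is the standard half-relaxed limits procedure. Define
\[
\overline u(x) = \limsup_{h \to 0,\ y \to x} u^{1D,e,h}(y),
\qquad
\underline u(x) = \liminf_{h \to 0,\ y \to x} u^{1D,e,h}(y),
\]
which are finite by the uniform bound and satisfy $\underline u \le \overline u$ everywhere. Using consistency (Definition~\ref{def:consistentSimpler}) together with the ellipticity of $\Aff^{1D,e}$, one shows in the usual way that $\overline u$ is a viscosity subsolution and $\underline u$ a viscosity supersolution of $\Aff^{1D}[u] = f$, with the boundary conditions interpreted in the viscosity sense. The comparison principle then forces $\overline u \le \underline u$, hence $\overline u = \underline u =: u$, which is the unique viscosity solution; and equality of the two half-relaxed limits is exactly locally uniform convergence $u^{1D,e,h} \to u$ as $h \to 0$.

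The main obstacle is not on the discretization side, which is already settled, but in confirming that the static problem genuinely admits a comparison principle between upper semicontinuous subsolutions and lower semicontinuous supersolutions: the cube-root nonlinearity makes $\Aff^{1D}$ non-Lipschitz, and the equation carries no zeroth-order term, so uniqueness relies essentially on the boundary conditions and on the structure inherited from \cite{giga2006surface} (or on a direct one-dimensional argument). The secondary technical point is ensuring the boundary conditions are formulated so that the half-relaxed limits satisfy them in the viscosity sense; once the comparison principle is granted, the remaining steps are routine.
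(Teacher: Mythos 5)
Your proposal is correct and follows exactly the paper's route: the paper's own proof is the single sentence that convergence follows from the Barles--Souganidis theorem, with consistency and ellipticity supplied by Lemma~\ref{lemma:consistencyellipticAC1D}, stability built into the hypothesis of uniform boundedness, and the comparison principle for the static equation covered by the remark following the parabolic Comparison Principle. You have simply made explicit the half-relaxed-limits machinery and the verification of the four ingredients that the paper leaves implicit.
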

\begin{proof}
Convergence for the elliptic discretization $-\Aff^{1D,e}$ follows from the Barles-Souganidis theorem.
\end{proof}

Unlike in the elliptic problem, in the parabolic problem we need to use the regularized scheme, with the time discretization being given by a forward Euler step. This leads us to
\bq\label{discretizationtime1D}
u(\cdot,t+dt) = u(\cdot,t) + dt \:\Aff^{1D,e,\delta}[u(\cdot,t)].
\eq

\begin{theorem}\label{theorem:convergence_regular_1D}
Let  $u(x,t)$ be the unique viscosity solution of  $u_t = {\Aff}^{1D}[u]$  in $\Omega \times [0,\infty)$, along with $u(x,0) = u_0(x)$ and suitable boundary conditions. Assume as well that $K = h^{-1/3}$ and $L = h^{-4/3}$. For each $h >0$,  let $u^{1D,e,h}$ be the uniformly bounded solution of the monotone  time discretization \eqref{discretizationtime1D} with $dt \leq 1/{C^h}$ given by \eqref{Lipconst1D}. Then $u^{1D,e,h} \to u$ locally uniformly, as $dt,h  \to 0$.
\end{theorem}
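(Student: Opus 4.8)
The plan is to invoke the Barles--Souganidis convergence framework in the parabolic form discussed in \autoref{sec:convergencetheory}: the limiting equation $u_t = \Aff^{1D}[u]$ is degenerate elliptic and, by the theory recalled in Section~2, admits a comparison principle, so its viscosity solution is unique. It therefore suffices to verify that the forward Euler time discretization \eqref{discretizationtime1D} is consistent, monotone and stable, and then to conclude by Theorem~\ref{BStheorem}.

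First I would establish monotonicity. By Lemma~\ref{lemma:1Delliptic} the spatial operator $-\Aff^{1D,e,\delta}$ is elliptic, and by Lemma~\ref{lemma:Lipconst1D} it is Lipschitz continuous with constant $C^h = K/h + 2L/h^2$, which for the prescribed choice $K = h^{-1/3}$, $L = h^{-4/3}$ is exactly \eqref{Lipconst1D}. As recalled in \autoref{sec:convergencetheory} (following \cite{ObermanSINUM}), for an elliptic operator with Lipschitz constant $C^h$ the Euler map $u \mapsto u + dt\,\Aff^{1D,e,\delta}[u]$ is a non-strict contraction, hence monotone, as soon as $dt \le 1/C^h$; adding an arbitrarily small multiple of $u$ makes it a strict contraction and enforces the discrete comparison principle. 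Stability is then immediate: it is precisely the standing hypothesis that for each $h>0$ there is a uniformly bounded solution $u^{1D,e,h}$, and monotonicity propagates the $L^\infty$ bounds of the initial and boundary data uniformly in $h$ and $dt$.

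The hard part will be consistency, precisely because the regularization parameters $K = h^{-1/3}$ and $L = h^{-4/3}$ diverge as $h \to 0$, so consistency with the true operator $\Aff^{1D}$ (rather than merely with the $h$-dependent operator $\Aff^{1D,\delta}$) is not automatic. This is exactly what Lemma~\ref{lemma:accuracyregular1D} is designed to supply: the exponents $\alpha = 1/3 \in (0,1)$, $\beta = 4/3 \in (0,2)$ satisfy $K\sqrt{L} = h^{-1}\ge 1$, and the lemma then gives, for any smooth test function $\phi$,
\[
\Aff^{1D,e,\delta}[\phi](x) - \Aff^{1D}[\phi](x) = \bO\!\left(h^{2/3}\right)\longrightarrow 0 .
\]
The forward Euler step adds the usual $\bO(dt)$ truncation error in time; note that the time-step restriction forces $dt \le 1/C^h \to 0$ together with $h$, so this term also vanishes, and the boundary terms are handled in the standard way. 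Combining these shows the space--time scheme \eqref{discretizationtime1D} is consistent with $u_t = \Aff^{1D}[u]$.

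With consistency, monotonicity and stability in hand, Theorem~\ref{BStheorem} in its parabolic version (cf.\ \autoref{sec:convergencetheory}) yields the locally uniform convergence $u^{1D,e,h} \to u$ as $h, dt \to 0$, completing the argument. The only genuinely delicate ingredient is the consistency step, and it has already been resolved by the balancing of the regularization exponents carried out in Lemma~\ref{lemma:accuracyregular1D}.
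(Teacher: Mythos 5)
Your proposal is correct and follows essentially the same route as the paper: monotonicity of the Euler map from the Lipschitz bound of Lemma~\ref{lemma:Lipconst1D} under the CFL-type restriction $dt \le 1/C^h$, consistency with $\Aff^{1D}$ from Lemma~\ref{lemma:accuracyregular1D} with $\alpha = 1/3$, $\beta = 4/3$, and then the Barles--Souganidis theorem. The paper's own proof is simply a terser statement of the same argument, citing \cite{ObermanSINUM} for the monotone Euler map and \cite{BSnum} for the convergence framework.
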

\begin{proof}
The elliptic scheme $\Aff^{1D,e,\delta}$ leads to a monotone time discretization, in other words, the solution map satisfies a comparison principle if $dt \le 1/{C^h}$ where $C^h$ is the Lipschitz constant of the elliptic scheme by \cite{ObermanSINUM}. Then the Barles-Souganidis theorem \cite{BSnum} applies.
\end{proof}

\begin{remark}
It is also true that, for fixed values of $K,L$, there is a unique viscosity solution, $u^\delta$, of the regularized PDE.  Then, fixing $K,L$ and using the discretization above with $dt = \bO(h^2)$, the forward Euler method converges uniformly to $u^\delta$ as $h\to 0$. 
\end{remark}

\section{Nonconvergence of standard finite differences}\label{sec:standard}\label{sec:breakaccurate}

In this section we show that standard finite differences are unstable for both the one dimensional and the two dimensional operators that we study. 
This instability can be understood from the one dimensional model, which, if we take $|u_x| = 1$, reduces to the operator $u_{xx}^{1/3}$.  It is certainly plausible that the singularity near $u_{xx} =0$ could lead to instabilities. 
This motivates the regularization introduced in the previous section, which replaces the singularity of the cube root with a linear function with large slope.  It also motives the convergent finite difference schemes, which have an explicit time step that ensures the convergence of the time dependent schemes.

We begin with an example where the standard finite scheme does not converge for the one-dimensional model. Next we consider the time dependent equation and the associated scheme obtained with the (unregularized) elliptic scheme and a forward Euler step in time. A scaling argument suggest that the choice $dt = \bO(h^{4/3})$ should provide a stable scheme. In fact, this scaling argument can be improved to a proof of the maximum principle for the homogeneous equation with the same time step. However, the maximum principle is not enough for convergence (we need the comparison principle) and we demonstrate divergence with that time step. Using a smaller time step $dt = \bO(h^2)$ appears to fix the problem.  (The standard finite difference scheme diverges for the example we present no matter how small the time step).  The example is then generalized to the two-dimensional operator.

\subsection{Nonconvergence of standard finite differences in one dimension}

Consider the discretization given by inserting the standard centered differences, given by \eqref{standard_fd},
\[
\Aff^{1D,a}[u] = \left(\left(u_x^h\right)^2\left(u_{xx}^h\right)\right)^{1/3}.
\]

We considered an exact solution  $u_0(x) = \sin(2\pi x)$ of \eqref{1Dmodel}.  Then we solved the time dependent problem, using the forward Euler time discretization,  with initial data given by the solution $u(x,0) = u_0(x)$   We found that this solution was unstable for the standard finite difference scheme.
The results are displayed in Figure \ref{fig:1Dbreak}, which illustrates that the numerical solution diverges from the exact solution.  This effect persists over different grid sizes, and over smaller time steps. 
\begin{figure}[htp]
\centering
\begin{tabular}{cccc}
\includegraphics[width=0.2\textwidth]{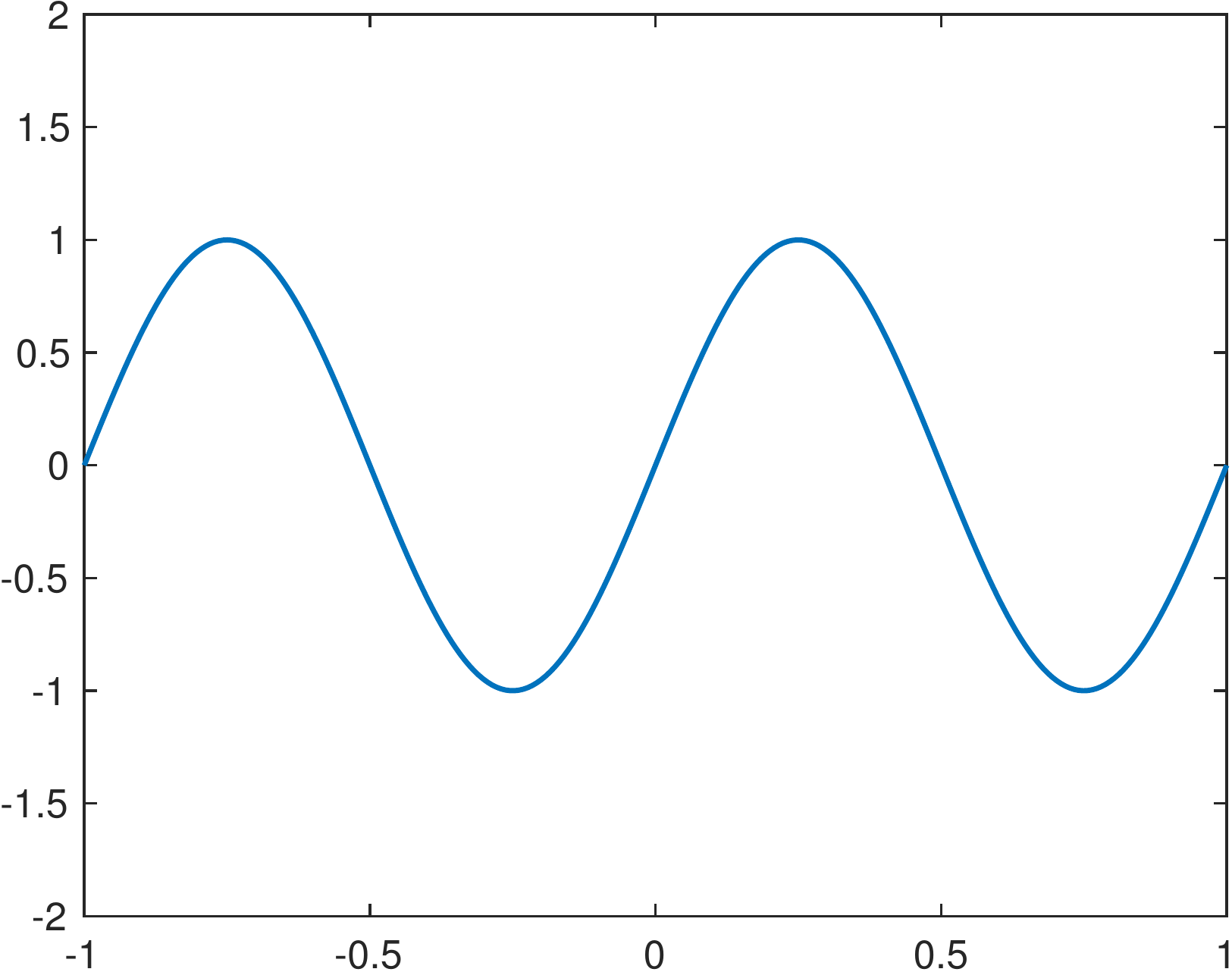} & \includegraphics[width=0.2\textwidth]{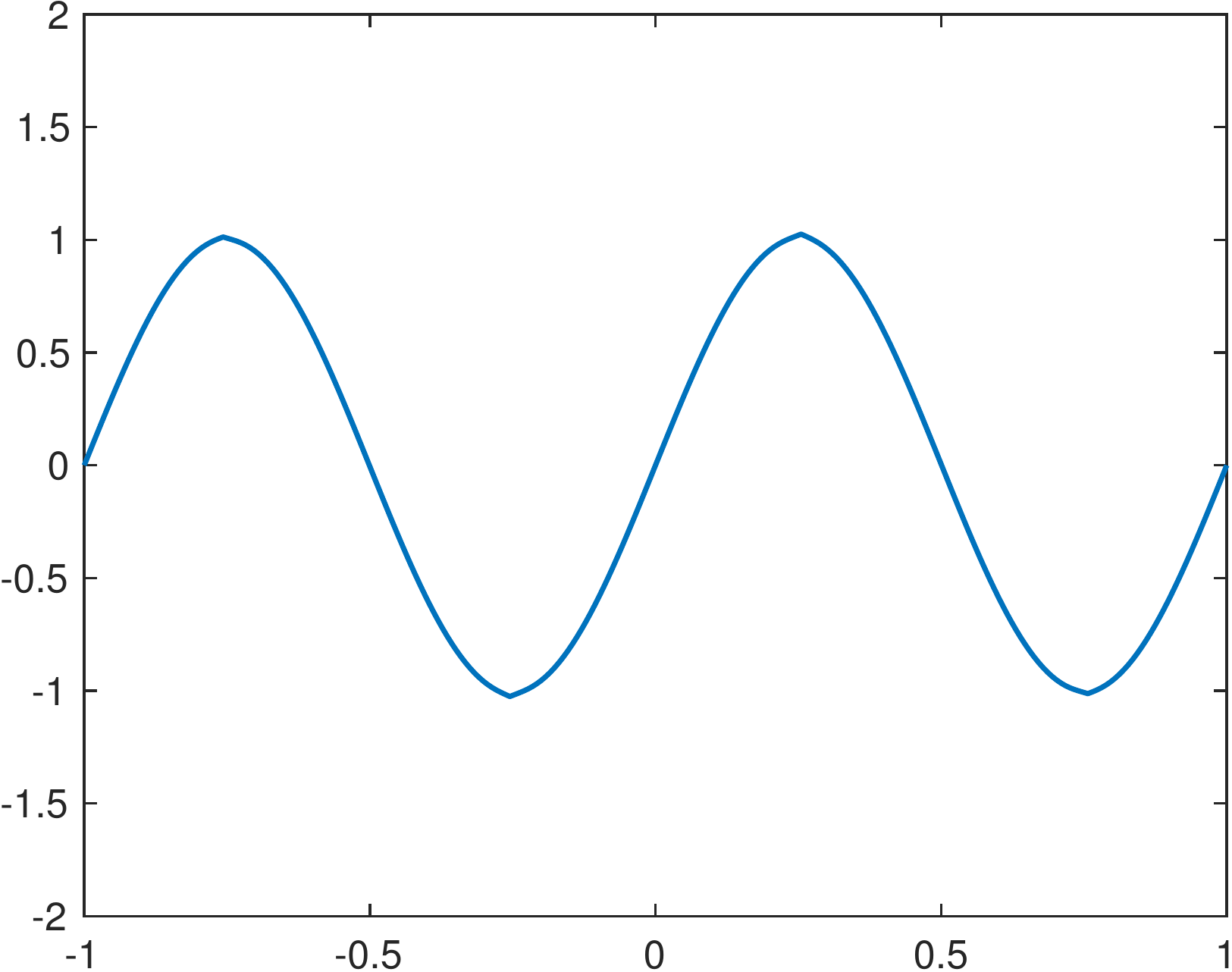} & \includegraphics[width=0.2\textwidth]{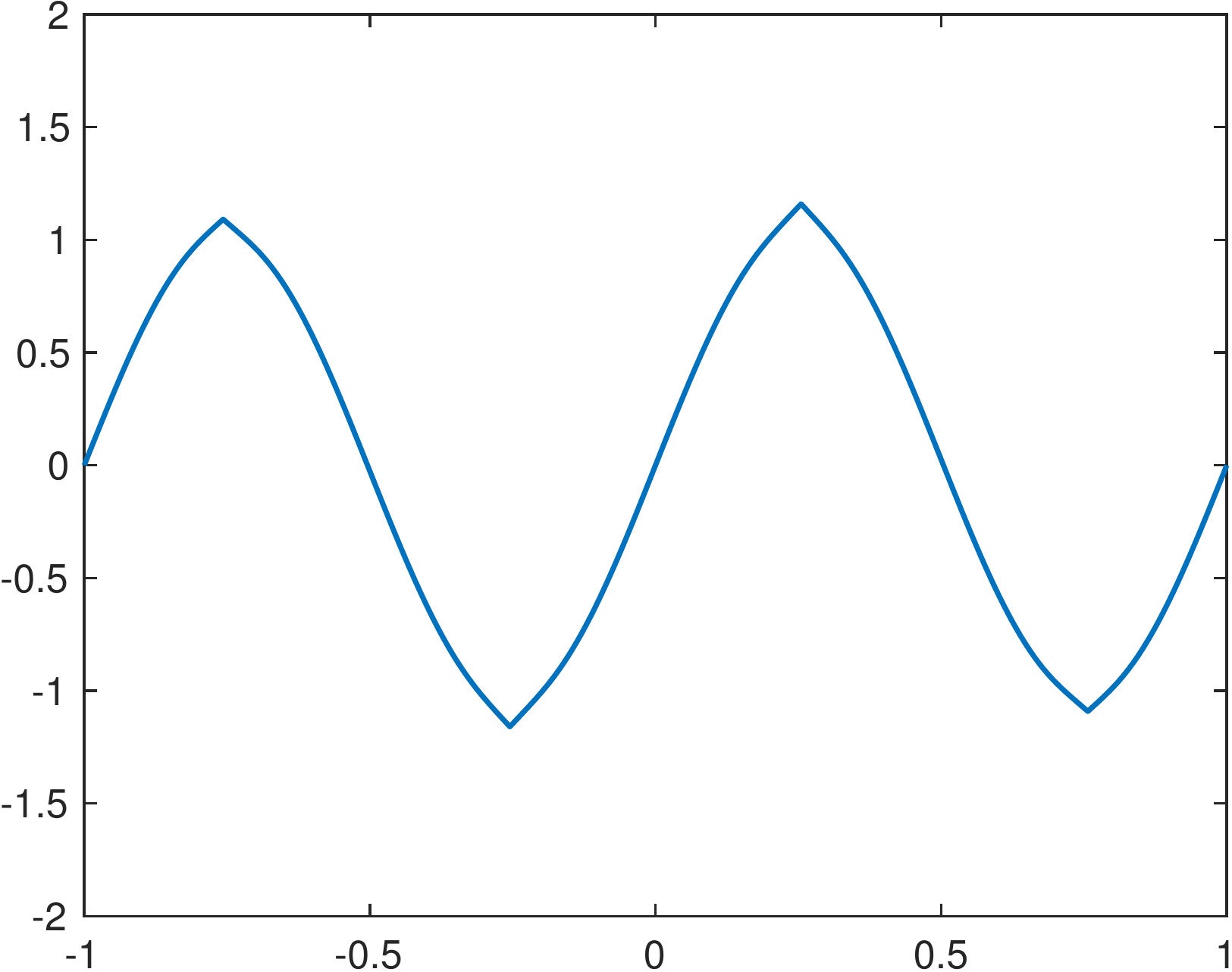} & \includegraphics[width=0.2\textwidth]{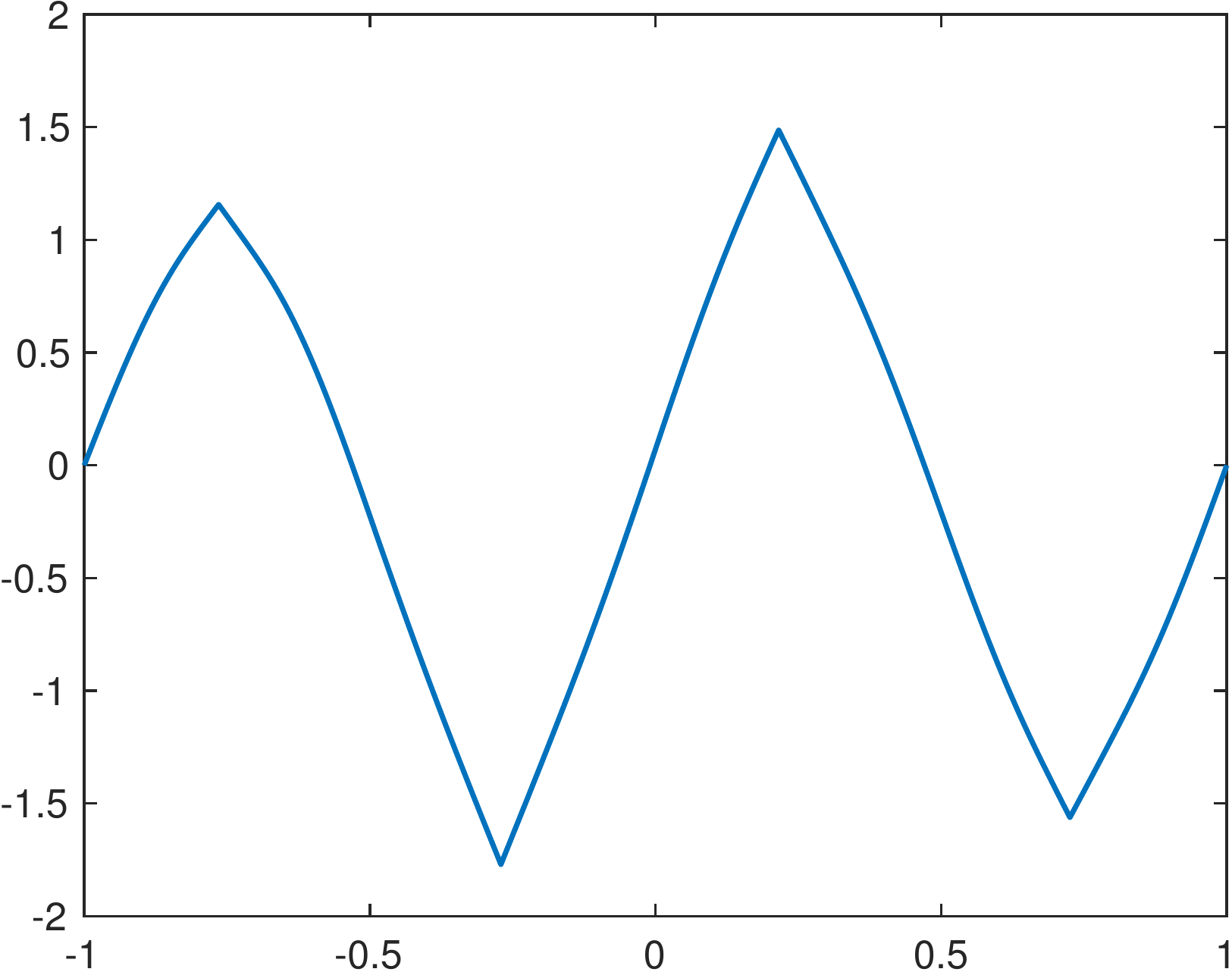}
\end{tabular}
\caption{Solution of the one-dimensional model equation using standard finite differences at times $t \in \{0,1,2,5\}$. Set $dt = h^2/2$ on a  $256$-point grid.}
\label{fig:1Dbreak}
\end{figure}

Next we consider the discretization of the time-dependent equation 
\begin{equation}\label{aff1dt}
u_t = \Aff^{1D}[u] - f.
\end{equation}
Using a forward Euler method in time, and the elliptic method in space leads to 
\begin{equation}\label{aff1dth}
u(\cdot,t+dt) = \Phi_t^{1D,e}(u) := u(\cdot,t) + dt (\Aff^{1D,e}[u(\cdot,t)]-f).
\end{equation}

A scaling argument suggests $dt = \bO(h^{\frac{4}{3}})$ might be stable, since the operator $\Aff^{1D}$ is homogeneous to this order in $h$.   In fact, we are able to prove the following.   
\begin{lemma} When $f=0$ in \eqref{aff1dt}, the solution map $\Phi^{1D,e}$ satisfies the maximum principle
\[\min \Phi^{1D,e}_t(u) \leq \Phi^{1D,e}_{t+dt}(u) \leq \max \Phi^{1D,e}_t(u),\]
provided $dt \leq (h^4/2)^{1/3}$.
\end{lemma}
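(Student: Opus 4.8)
The plan is to expand the forward Euler update $\Phi^{1D,e}_{t+dt}(u)(x) = u(x) + dt\,\Aff^{1D,e}[u](x)$ pointwise at a grid node $x$, and to split into the cases forced by the elliptic discretization \ref{ellipticAff1D} according to the signs of the one-sided differences of $u$ and of $-u^h_{xx}$. The key structural fact is that $\Aff^{1D,e}[u](x)$ is built from the quantities $\Abs{u^h_x}^+$, $-\Abs{u^h_x}^-$ (both controlled by the neighbouring values $u(x\pm h)$) and $-u^h_{xx} = \frac{2u(x)-u(x+h)-u(x-h)}{h^2}$. Writing $m = \min_y \Phi^{1D,e}_t(u)(y)$ and $M = \max_y \Phi^{1D,e}_t(u)(y)$ over the grid (equivalently over the stencil of $x$ after a standard localization), I want to show $m \le u(x) + dt\,\Aff^{1D,e}[u](x) \le M$.

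First I would treat the upper bound at a node $x$; the lower bound follows by the symmetry $\Aff^{1D,e}[-u] = -\Aff^{1D,e}[u]$ applied at the node where $u$ achieves the opposite extreme, or by a parallel computation. If $u^h_{xx}(x) \le 0$ then $-u^h_{xx} \ge 0$, only the $A^{\delta,+}$-type term (here with $K,L$ absent since we are using the unregularized $\Aff^{1D,e}$, i.e.\ $A^+(\,\cdot\,)=\left(\,\cdot\,\right)^{1/3}$-type) is active, and $\Aff^{1D,e}[u](x) = \bigl((\,|u^h_x|^+\,)^2(-u^h_{xx})\bigr)^{1/3}$; the main estimate is to bound this cube root by a multiple of $(\max_{y}u(y)-u(x))$ divided by $h^{4/3}$, using that both $|u^h_x|^+$ and $-u^h_{xx}$ are controlled by the spread $u(x\pm h)-u(x)$. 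Concretely, if $a = u(x+h)-u(x)$ and $b = u(x-h)-u(x)$, then $\Abs{u^h_x}^+ \le \max(|a|,|b|)/h$ and $-u^h_{xx} = -(a+b)/h^2$, and when this is nonnegative one gets $\Aff^{1D,e}[u](x)\le h^{-4/3}\bigl(\max(|a|,|b|)^2 \cdot(-(a+b))\bigr)^{1/3}$. Bounding $\max(|a|,|b|)$ and $-(a+b)$ crudely by $2\max(M-u(x),\text{(spread)})$ and optimizing the resulting scalar inequality of the form $c\,\theta^{1/3}\le M - u(x)$ — valid once $dt \le (h^4/2)^{1/3}$ — gives $u(x)+dt\,\Aff^{1D,e}[u](x)\le M$. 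The case $u^h_{xx}(x) > 0$ is easier for the upper bound: then $\Aff^{1D,e}[u](x)\le 0$ (the active term is $A^-$-type, which is nonpositive), so $u(x)+dt\,\Aff^{1D,e}[u](x)\le u(x)\le M$ trivially; and this is the case that matters for the lower bound, handled symmetrically.

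The main obstacle I anticipate is the bookkeeping in the "bad" case — the upper bound when $-u^h_{xx}\ge 0$ — namely getting the constant in the scalar inequality to come out exactly so that the threshold is $(h^4/2)^{1/3}$ rather than some other constant times $h^{4/3}$. This requires being careful that the quantity multiplying $h^{-4/3}$ inside the cube root is bounded by $2$ times the cube of the spread between $u(x)$ and its neighbours (hence the factor $1/2$), and then checking that $dt\le(h^4/2)^{1/3}$ makes $dt\cdot h^{-4/3}\cdot 2^{1/3}\cdot(\text{spread})^{2/3}\cdot(\text{spread})^{1/3}\le(\text{spread})$, i.e.\ the $dt$ absorbs exactly the $h^{-4/3}$ and the stray constant. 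Everything else — the reduction to the stencil, the sign case split, the symmetry argument for the lower bound — is routine.
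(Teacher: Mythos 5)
Your overall strategy --- a pointwise case split on the sign of $u_{xx}^h$, with the nontrivial branch controlled by a scalar inequality in the local spread --- can be made to work and does produce exactly the threshold $(h^4/2)^{1/3}$, but as written your two branches are swapped because you have misread the sign convention of the scheme: the display \ref{ellipticAff1D} defines $-\Aff^{1D,e}[u]$, not $\Aff^{1D,e}[u]$. When $u_{xx}^h\le 0$ the active term is $A^+\ge 0$, so $-\Aff^{1D,e}[u]\ge 0$, hence $\Aff^{1D,e}[u]\le 0$ and the upper bound is the \emph{trivial} case; when $u_{xx}^h>0$ one gets $\Aff^{1D,e}[u]=\left((\Abs{u_x^h}^-)^2\,u_{xx}^h\right)^{1/3}\ge 0$, and this is the branch that needs the estimate. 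This is not merely a labelling slip: in the branch you call hard, the quantities $\Abs{u_x^h}^+=\max\{-a,-b,0\}/h$ and $-u_{xx}^h=-(a+b)/h^2$ (with $a=u(x+h)-u(x)$, $b=u(x-h)-u(x)$) are controlled by $u(x)-\min u$, not by $M-u(x)$. Indeed, if $u$ has a strict local maximum at $x$ with $u(x)=M$ and $u(x\pm h)=M-1$, then $M-u(x)=0$ while your cube root equals $(2/h^4)^{1/3}>0$, so the proposed bound "cube root $\le$ multiple of $M-u(x)$" is false there. In the correct hard branch ($u_{xx}^h>0$) the bounds $\Abs{u_x^h}^-=\max\{a,b,0\}/h\le (M-u(x))/h$ and $u_{xx}^h=(a+b)/h^2\le 2(M-u(x))/h^2$ do hold and give $\Aff^{1D,e}[u]\le (2/h^4)^{1/3}(M-u(x))$, whence $dt\le(h^4/2)^{1/3}$ suffices; the lower bound is symmetric.

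For comparison, the paper's proof avoids the case split entirely. It sandwiches $-\frac{2}{h}\Abs{u_x^h}^-\le -u_{xx}^h\le \frac{2}{h}\Abs{u_x^h}^+$, uses the monotonicity of $A^{\pm}$ in $q$ to reduce the scheme to the eikonal operators, obtaining $-\left(\frac2h\right)^{1/3}\Abs{u_x^h}^-\le -\Aff^{1D,e}[u]\le \left(\frac2h\right)^{1/3}\Abs{u_x^h}^+$, and then observes that $u(x)+h\Abs{u_x^h}^-$ and $u(x)-h\Abs{u_x^h}^+$ are exactly the maximum and minimum of $\{u(x-h),u(x),u(x+h)\}$; the condition $dt\,(2/h)^{1/3}\le h$ is precisely the stated time-step restriction. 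That route yields the slightly stronger local statement that the updated value lies between the stencil's min and max, with no case analysis. Your corrected case-by-case argument would be a valid alternative, but you should fix the signs before relying on it.
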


\begin{proof}
We have
\[
-\frac{2}{h} \Abs{u_x^h}^- \leq -u_{xx}^h \leq \frac{2}{h}\Abs{u_x^h}^+.
\]
Thus, since $A^+ \in ND^+(\R^2)$ and $A^- \in ND^-(\R^2)$,
\[
0 \leq A^+\left( \Abs{u_x^h}^+,  -u_{xx}^h  \right) \leq A^+\left( \Abs{u_x^h}^+,  \frac{2}{h}\Abs{u_x^h}^+  \right) = \left(\frac{2}{h}\right)^{1/3}\Abs{u_x^h}^+
\]
and
\[
0 \geq A^+\left( \Abs{u_x^h}^-,  -u_{xx}^h  \right) \geq A^+\left( \Abs{u_x^h}^-,  -\frac{2}{h}\Abs{u_x^h}^-  \right) = -\left(\frac{2}{h}\right)^{1/3}\Abs{u_x^h}^-
\]
and so
\[
-\left(\frac{2}{h}\right)^{1/3}\Abs{u_x^h}^- \leq -\Aff^{1D,e}[u] \leq \left(\frac{2}{h}\right)^{1/3}\Abs{u_x^h}^+
\]
Hence
\[
u(x) - dt\left(\frac{2}{h}\right)^{1/3}\Abs{u_x^h}^+
\leq
u(x) + dt \Aff^{1D,e}[u]
\leq
u(x) + dt\left(\frac{2}{h}\right)^{1/3}\Abs{u_x^h}^-
\]

The proof now follows due to the choice of $dt$ and from observing that
\[
u(x) + h \Abs{u_x^h}^- = \max\{u(x+h),u(x-h),u(x)\}
\]
and
\[
u(x) - h \Abs{u_x^h}^+ = \min\{u(x+h),u(x-h),u(x)\}.
\]
\end{proof}

However, as the following example shows, the maximum principle by itself is not enough to guarantee convergence and so the above choice for the time step is not guaranteed to produce a convergent scheme.   In practice, the above choice for the time step leads to a divergent but bounded scheme with nonlinear instabilities.
\begin{example}\label{Example1D}
We solved \eqref{aff1dt} using \eqref{aff1dth}.  We took $u(x,0) = u_0$ where $u_0(x) = C x^{4/3}$ and $f =\Aff^{1D}[u_0]$. In Figure \ref{fig:Example1dMax}, we plot the numerical solution obtained at different times with $dt = (h^4/4)^{1/3}$ (The conservative choice of the time step is to account for the fact the equation is not homogeneous). The exact solution is clearly unstable.  For larger times, the solution has the same shape but with small high frequency oscillations in time.  On the other hand, choosing $dt = h^2/2$ leads to convergence. (The data is not presented to save space.)

\begin{figure}[htp]
\centering
\begin{tabular}{cccc}
\includegraphics[width=0.2\textwidth]{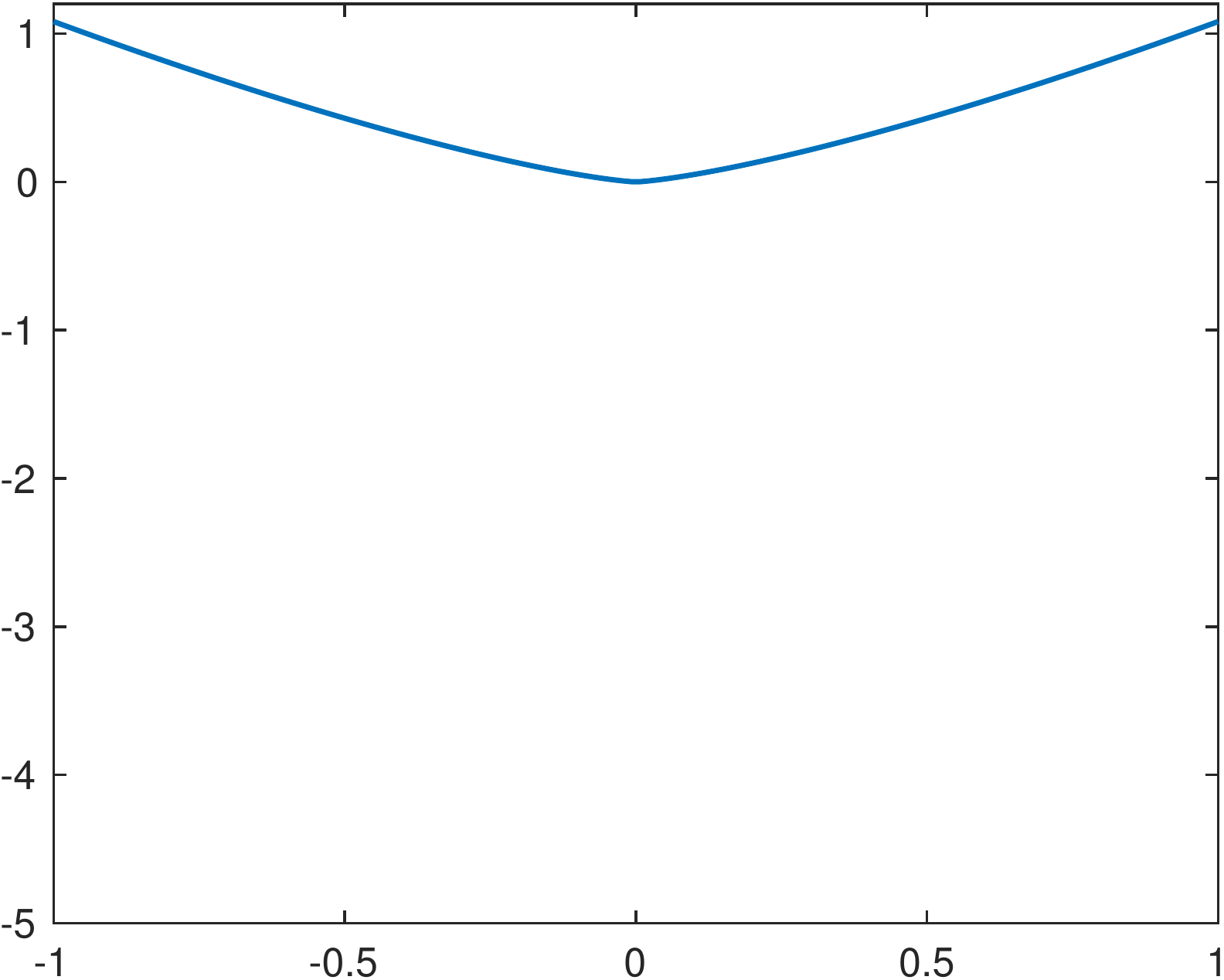} & \includegraphics[width=0.2\textwidth]{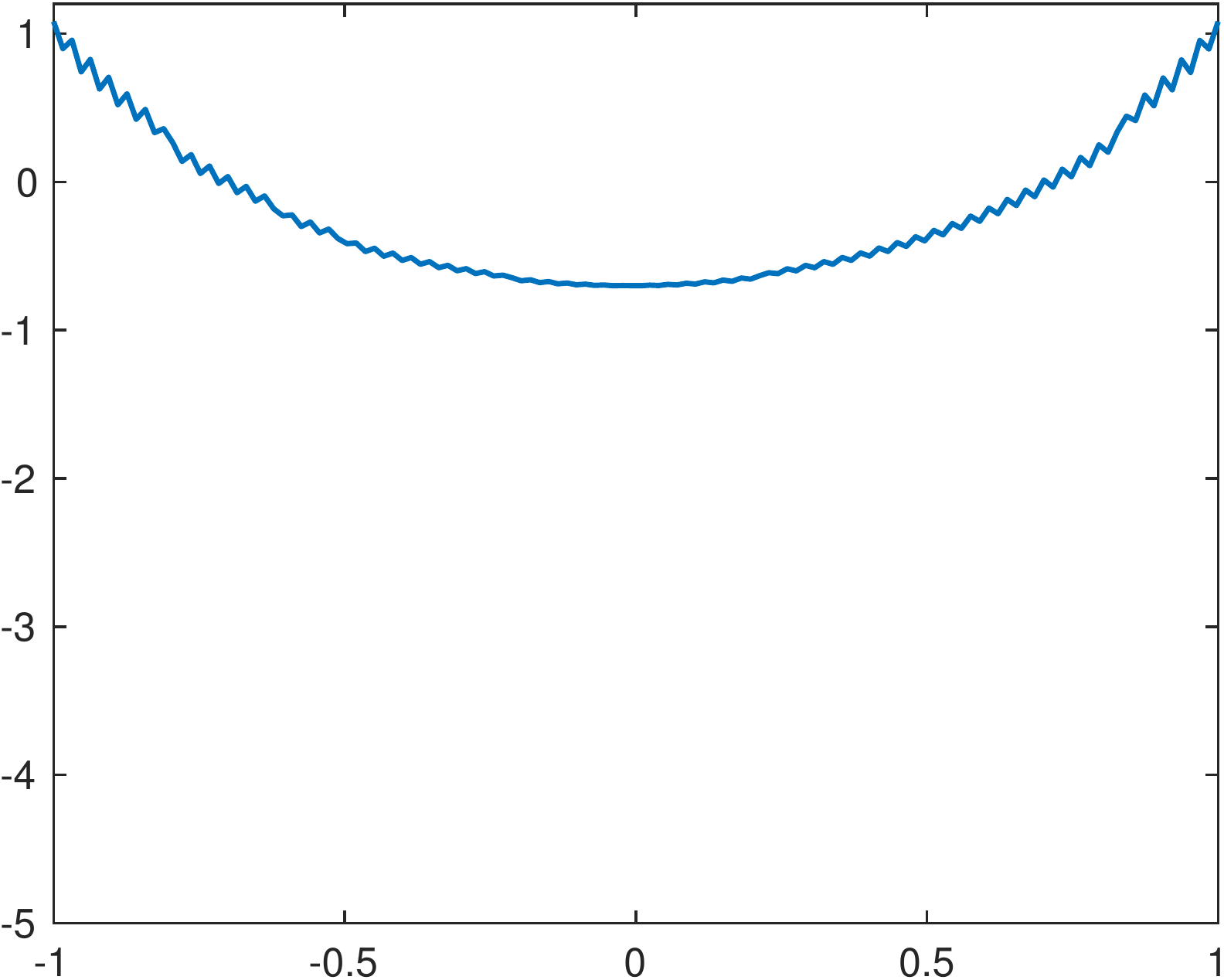} & \includegraphics[width=0.2\textwidth]{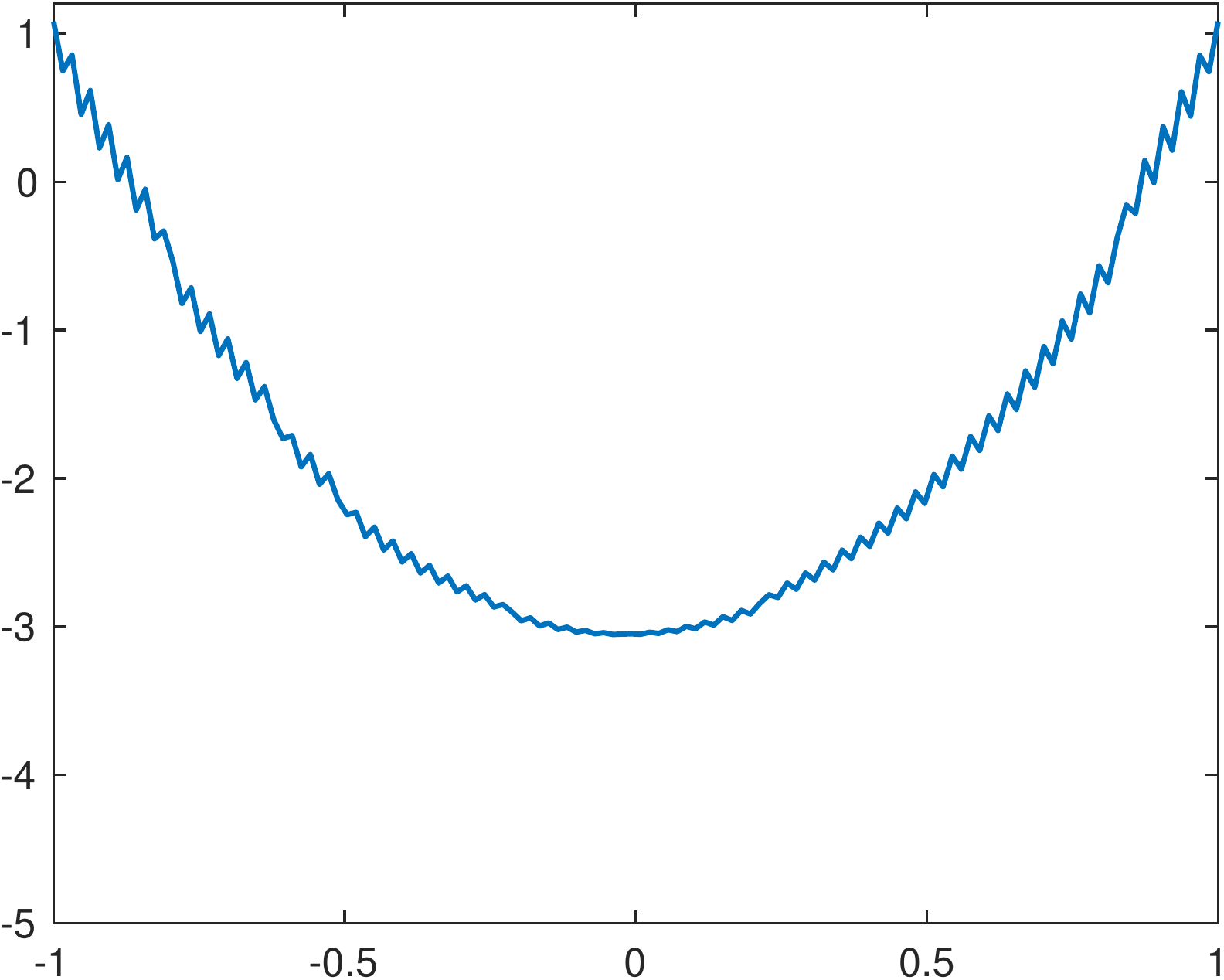} & \includegraphics[width=0.2\textwidth]{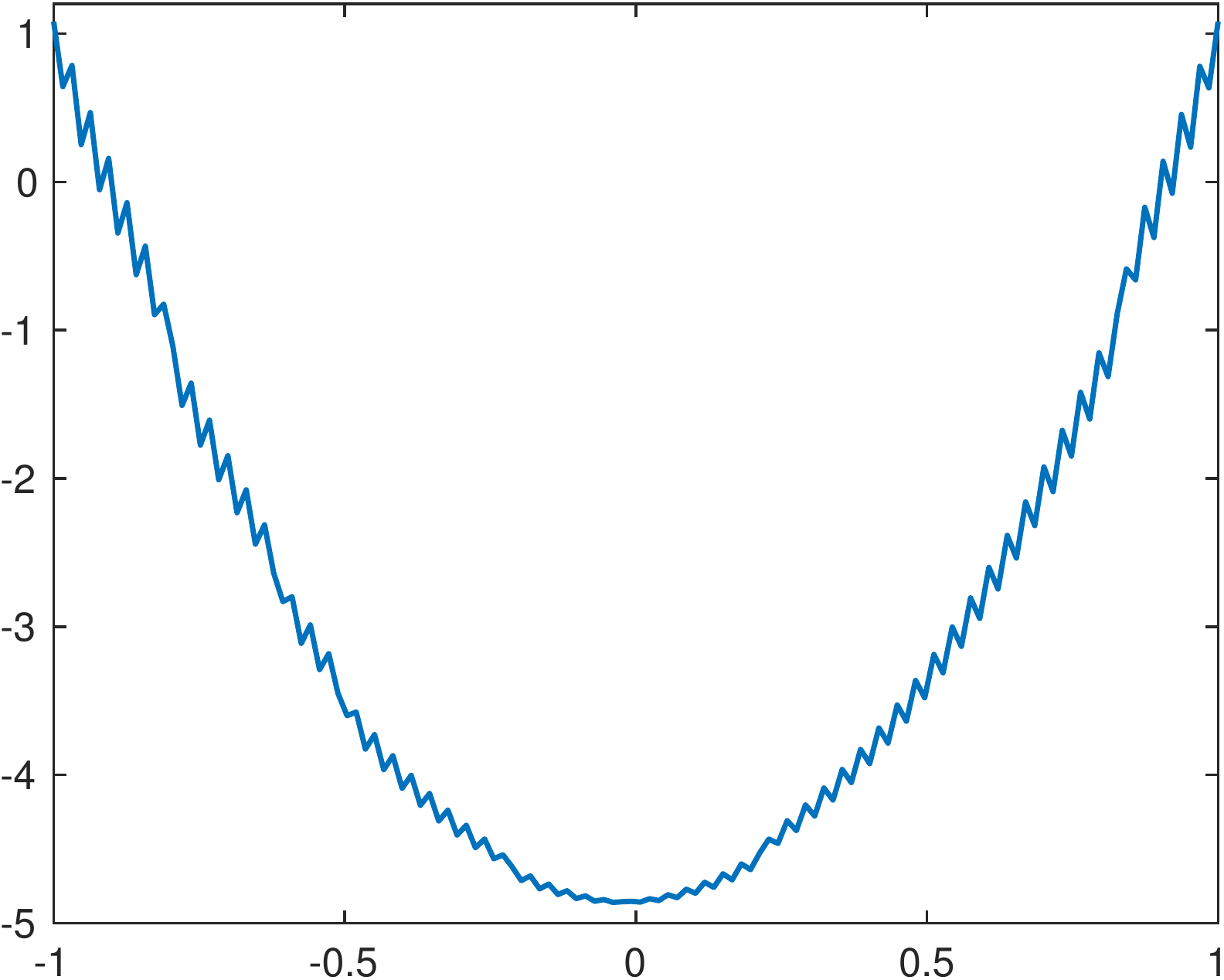}
\end{tabular}
\caption{Plot of the solution obtained described in Example \ref{Example1D} at time $t \in \{0,1,5,20\}$ on a $128$-point grid.}
\label{fig:Example1dMax}
\end{figure}
\end{example}

\subsection{Two dimensions}

In this section we define and study the standard finite difference scheme for $\Aff[u]$. We show that the accuracy degenerates near points where the affine curvature is zero. We give an example of a steady solution where the standard finite difference scheme breaks down.

Using \eqref{curvature}, we can write
\begin{align*}
\Aff[u]	
		& = \left(u_{xx} u_y^2 - 2u_x u_y u_{xy} + u_{yy} u_x^2\right)^{1/3}.
\end{align*}

\begin{definition}
Let $u:\R^2\to\R$. We define the scheme
\begin{align}\tag*{$(AC)^a$}\label{accurateAC}
\Aff^a[u] = \left(u_{xx}^h (u_y^h)^2 - 2u_x^h u_y^h u_{xy}^h + u_{yy}^h (u_x^h)^2\right)^{1/3}
\end{align}
that approximates $\Aff[u]$.
\end{definition}

\begin{remark}
The $u_{xy}^h$ term is not elliptic, and consequently $-\Aff^a$ is not elliptic.
\end{remark}

\begin{lemma}\label{lemma:standardaccuracy}  
Let $(x,y) \in \Omega$ be a reference point on the grid and $\phi$ be a smooth function that is defined in a neighborhood of the grid. Then the scheme $\Aff^a[\phi]$ defined by \ref{accurateAC} approximates $\Aff[\phi]$  with accuracy
\[
\Aff^a[\phi](x,y) - \Aff[\phi](x,y) = 
\begin{cases}
\bO(h^2), & \Aff[\phi](x,y) \neq 0,\\
\bO(h^{\frac{2}{3}}), &\Aff[\phi](x,y) = 0.
\end{cases}
\]
\end{lemma}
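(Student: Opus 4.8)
The plan is to factor both operators through the real cube root and a fixed polynomial in the derivatives, and then track how the $\bO(h^2)$ consistency of the standard centered differences passes through that cube root. Write $Q[u] := u_{xx}u_y^2 - 2u_xu_yu_{xy} + u_{yy}u_x^2$, so that $\Aff[u] = (Q[u])^{1/3}$ and, with each derivative replaced by the corresponding standard centered difference from \eqref{standard_fd}, $\Aff^a[u] = (Q^h[u])^{1/3}$ where $Q^h[u] := u_{xx}^h(u_y^h)^2 - 2u_x^hu_y^hu_{xy}^h + u_{yy}^h(u_x^h)^2$. Throughout, the cube root is the odd real cube root, so $(\Aff[\phi](x,y))^3 = Q[\phi](x,y)$.

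The first step is to show $Q^h[\phi](x,y) = Q[\phi](x,y) + \bO(h^2)$. Each of $u_x^h, u_y^h, u_{xx}^h, u_{yy}^h, u_{xy}^h$ evaluated at $\phi$ differs from the corresponding derivative of $\phi$ at $(x,y)$ by $\bO(h^2)$, with constants controlled by bounds on the derivatives of $\phi$ on a fixed neighborhood of $(x,y)$; since $Q$ is a fixed polynomial in these five quantities and the derivatives of $\phi$ are bounded near $(x,y)$, forming the products and the sum preserves the $\bO(h^2)$ error. This is the routine Taylor-series part.

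The second step is the case split according to $a := Q[\phi](x,y) = (\Aff[\phi](x,y))^3$. If $\Aff[\phi](x,y) \neq 0$ then $a \neq 0$, so for $h$ small enough $Q^h[\phi](x,y)$ lies in a neighborhood of $a$ bounded away from the origin, on which $t \mapsto t^{1/3}$ is Lipschitz with constant depending only on that lower bound; hence $\Abs{\Aff^a[\phi](x,y) - \Aff[\phi](x,y)} = \Abs{(Q^h[\phi](x,y))^{1/3} - a^{1/3}} \le C\,\Abs{Q^h[\phi](x,y) - a} = \bO(h^2)$. If $\Aff[\phi](x,y) = 0$ then $a = 0$, so $Q^h[\phi](x,y) = \bO(h^2)$ while $\Aff[\phi](x,y) = 0$; using $\Abs{t^{1/3}} = \Abs{t}^{1/3}$ we obtain $\Abs{\Aff^a[\phi](x,y) - \Aff[\phi](x,y)} = \Abs{Q^h[\phi](x,y)}^{1/3} = \bO(h^{2/3})$, the claimed degenerate rate.

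There is no serious obstacle; the only delicate point is the loss of smoothness of the cube root at the origin, which is precisely what prevents a rate better than $\bO(h^{2/3})$ at points where the affine curvature vanishes, and which is handled by the odd-power bound $\Abs{t^{1/3}} = \Abs{t}^{1/3}$ in place of a Lipschitz estimate. This is the two-dimensional counterpart of the one-dimensional accuracy discussion in Remark \ref{rmk:accuracy1D}.
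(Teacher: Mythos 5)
Your proof is correct and follows essentially the same route as the paper: both first establish $Q^h[\phi] = Q[\phi] + \bO(h^2)$ from the second-order accuracy of the centered differences, then split on whether $Q[\phi](x,y)$ vanishes, using the smoothness of the cube root away from the origin for the $\bO(h^2)$ case (the paper via a Taylor expansion of $(r+t)^{1/3}$, you via the equivalent local Lipschitz bound) and the bound $\Abs{t^{1/3}} = \Abs{t}^{1/3}$ for the degenerate $\bO(h^{2/3})$ case. Your write-up is somewhat more careful about the uniformity of the constants, but there is no substantive difference.
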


\begin{proof}
All the standard finite differences used are second order accurate. Therefore
\[\phi_{xx}^h \left(\phi_y^h\right)^2 - 2\phi_x^h \phi_y^h \phi_{xy}^h + \phi_{yy}^h \left(\phi_x^h\right)^2 = \phi_{xx} \phi_y^2 - 2\phi_x \phi_y \phi_{xy} + \phi_{yy} \phi_x^2 + \bO(h^2),\]
in other words, $(\Aff^a[\phi])^3 = (\Aff[\phi])^3 + \bO(h^2)$. In addition, the Taylor expansion tells us that
\[(r+t)^{1/3} = r^{1/3} + \frac{t}{3r^{2/3}} + \bO(t^2)\]
when $r \neq 0$. Hence, when $\Aff[\phi](x,y) \neq 0$, it follows that $\Aff^a[\phi] = \Aff[\phi]+\bO(h^2)$.
Otherwise, when $\Aff[\phi](x,y) = 0$, we can only conclude that $\Aff^a[\phi] = \Aff[\phi]+\bO(h^{\frac{2}{3}})$.
\end{proof}

Now we present an example that shows that the scheme $\Aff^a$ does not converge.  We choose a level set function and a right hand side so that the equation is a steady state (see Example~\ref{ex:static}(d) for more details). 
Starting from initial data corresponding to the exact solution, and evolving in time with a forward Euler step, the solution changes to order one.  Indeed, the solution does not appear to reach a steady state, even after running for a long time. See Figure~\ref{fig:exBreak} for snapshots in time of the solution.  Similar behaviour was observed on finer grids (although it took a longer time to reach a comparable change in the solution).

\begin{figure}[htp]
\centering
\begin{tabular}{ccc}
\includegraphics[width=0.3\textwidth]{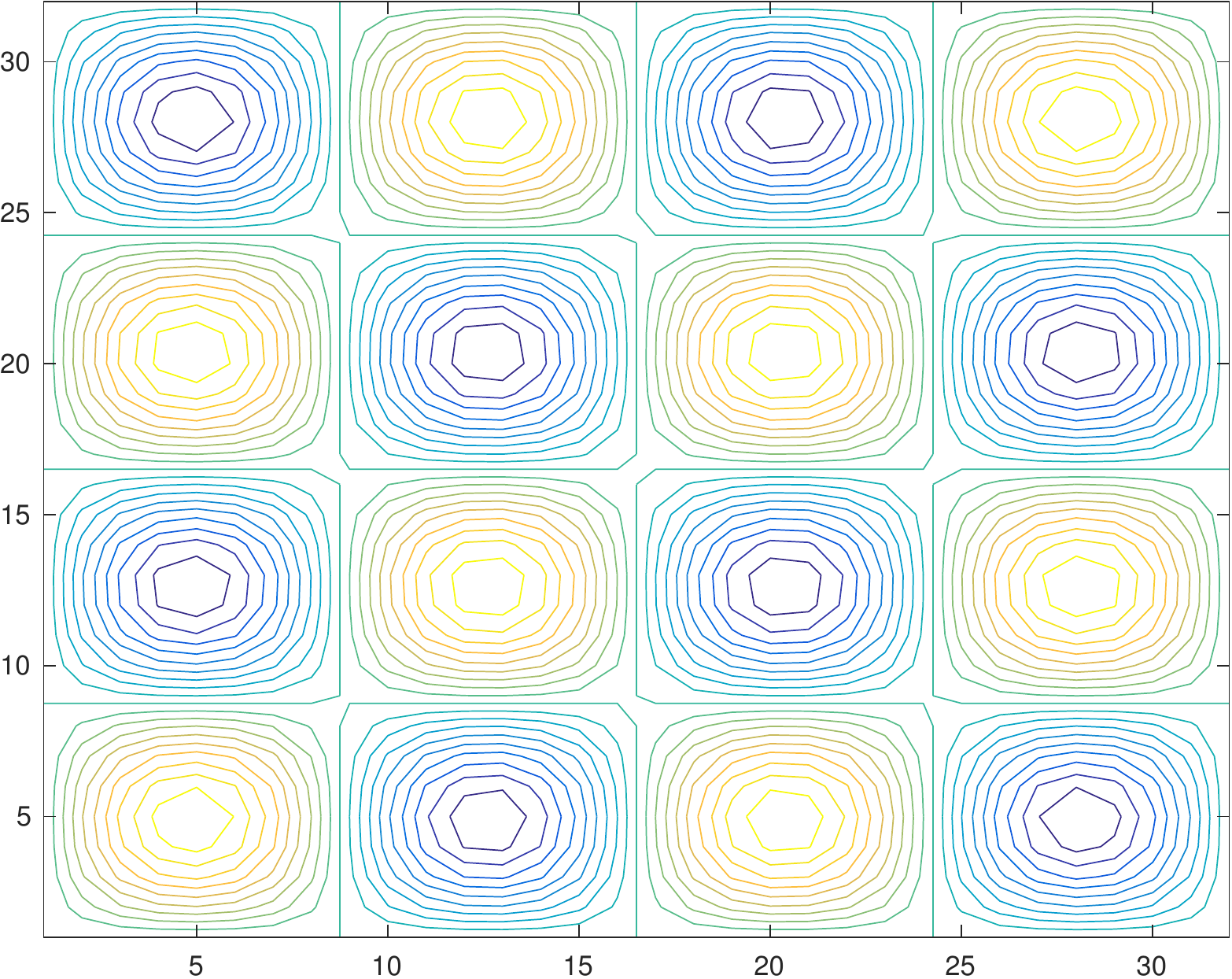} & \includegraphics[width=0.3\textwidth]{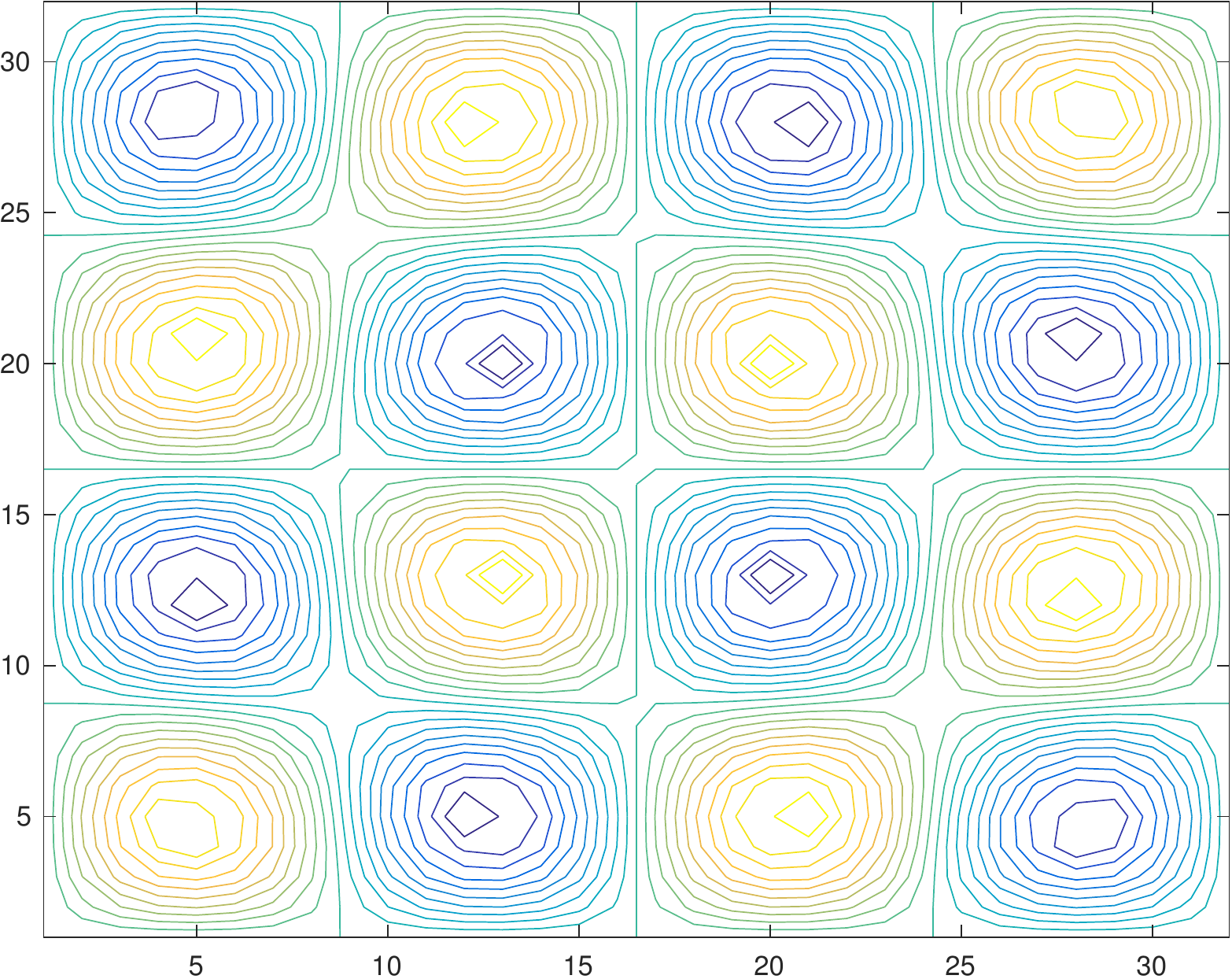} & \includegraphics[width=0.3\textwidth]{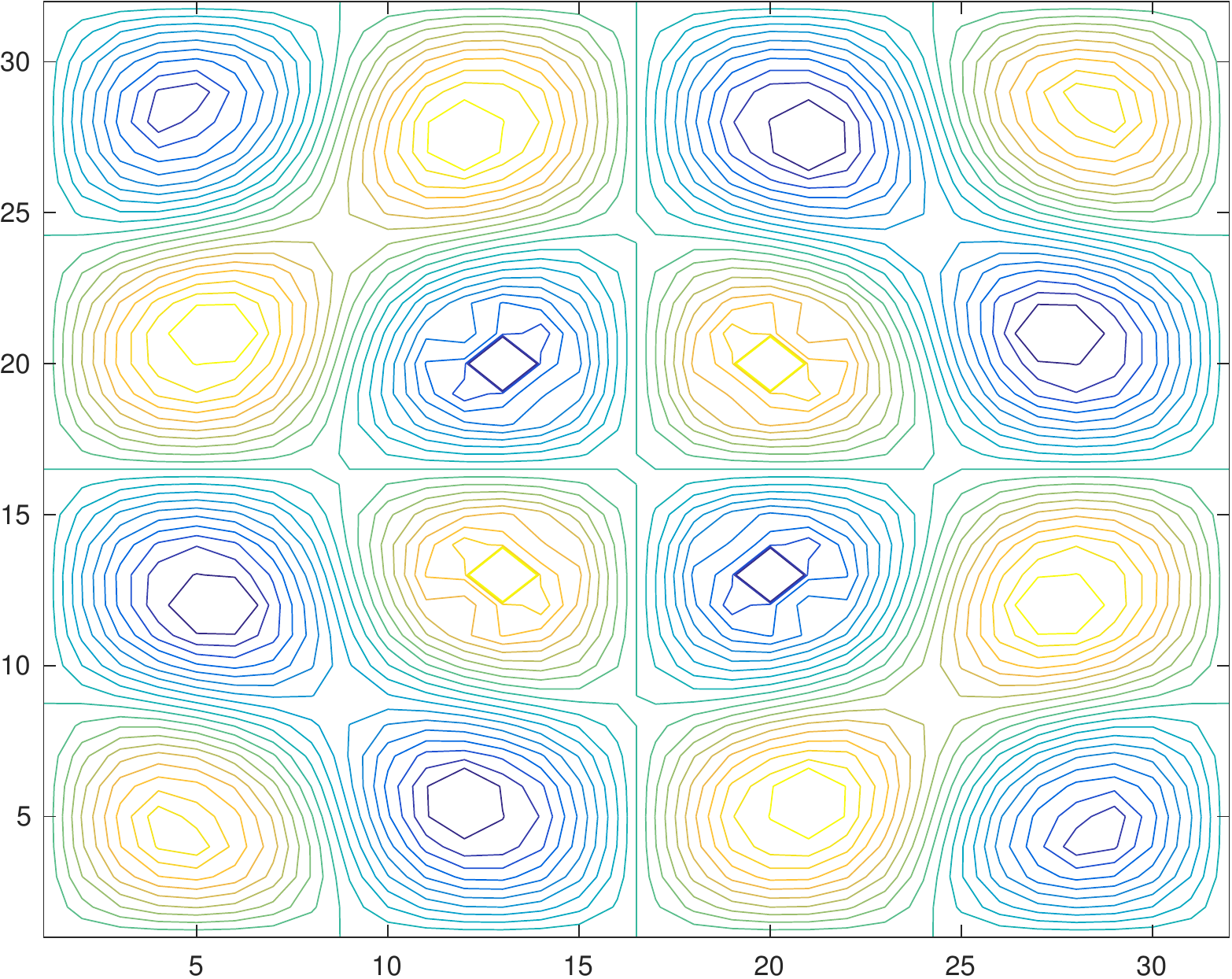}\\
\includegraphics[width=0.3\textwidth]{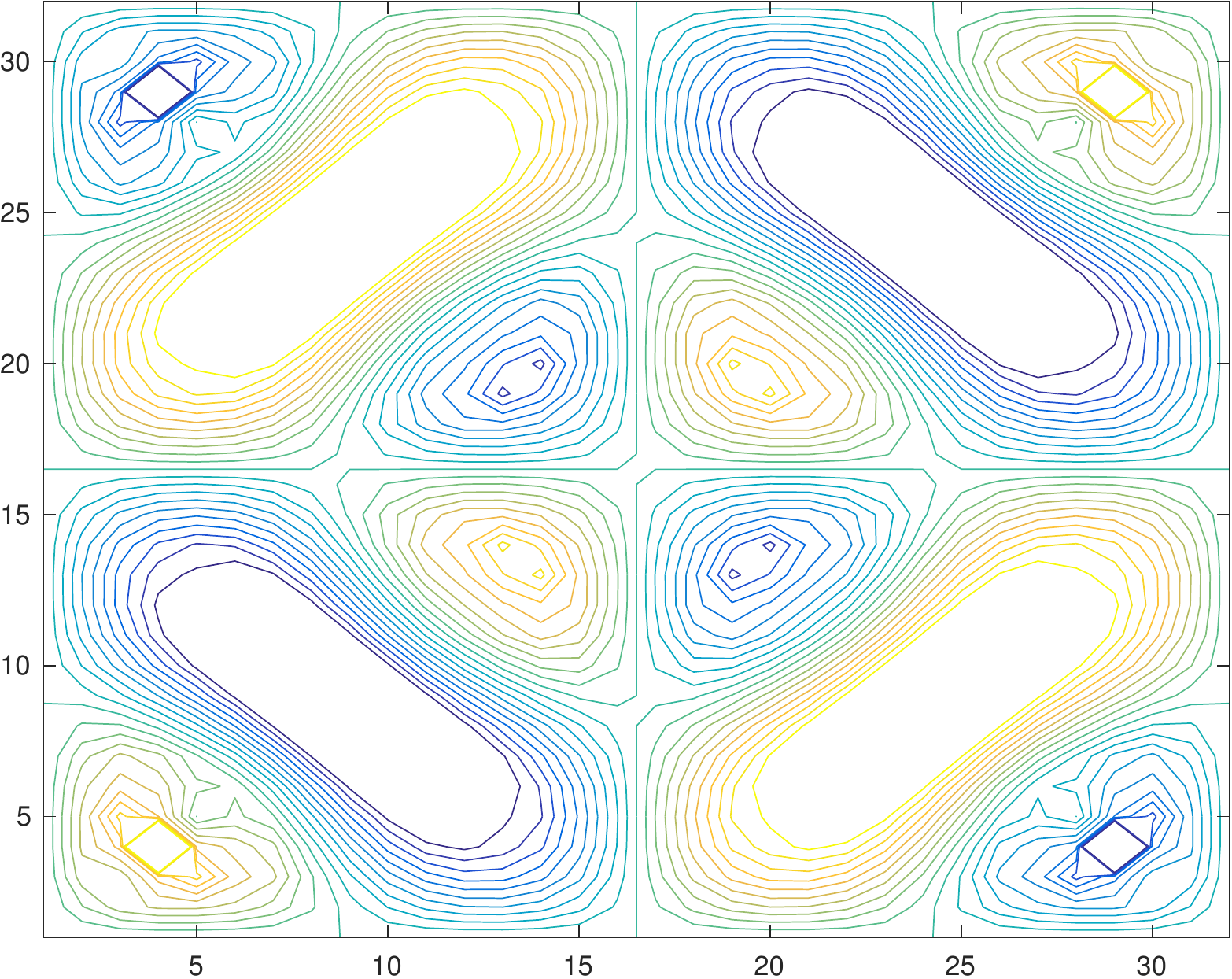} & \includegraphics[width=0.3\textwidth]{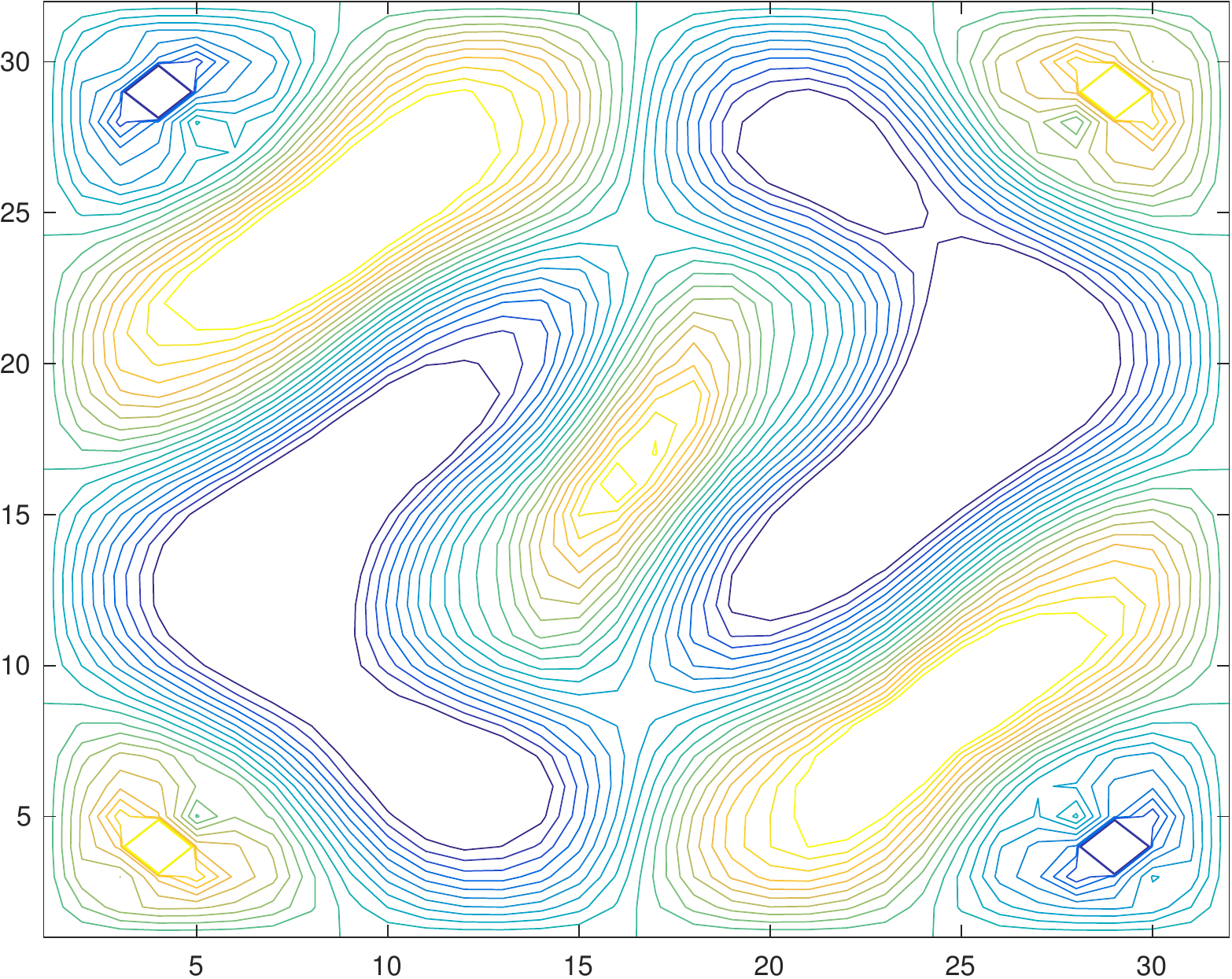} & \includegraphics[width=0.3\textwidth]{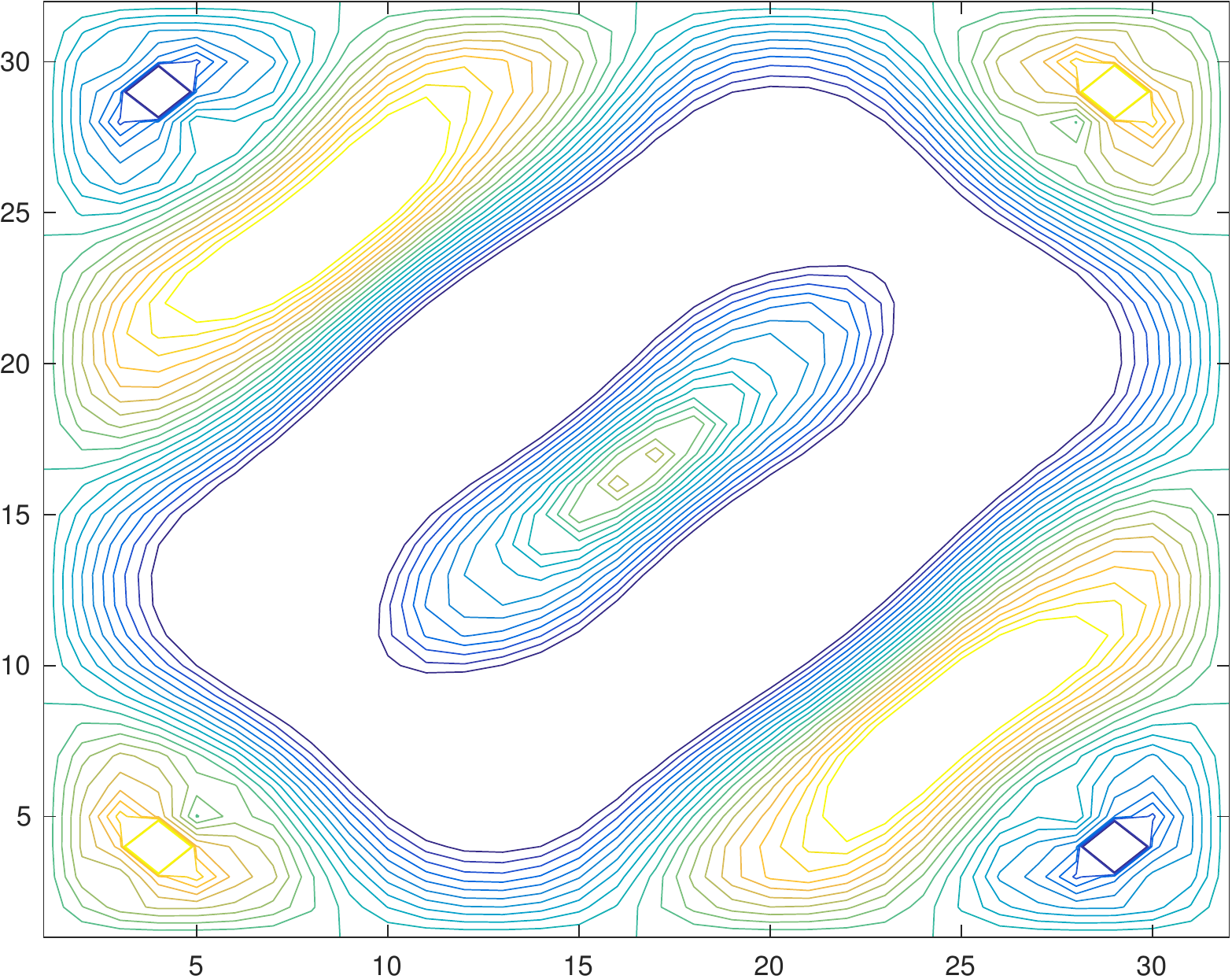}
\end{tabular}
\caption{Example \ref{ex:static} $(d)$ with the standard finite difference solver: level sets of the solution at times $t = 0$ (upper left), $t = 15$ (upper center), $t = 17$ (upper right), $t = 20$ (lower left), $t = 40$ (lower center) and $t = 50$ (lower right) with $dt = h^2/2$ on a $32 \times 32$ grid.}
\label{fig:exBreak}
\end{figure}

%

\section{Convergent finite difference methods}\label{sec:numerics}

Based on the convergence theory discussed above, and on the elliptic discretization of the one dimensional model equation, we now build a discretization of the affine curvature operator.   This discretization is based on the median scheme for the mean curvature operator from \cite{ObermanMC}.  We could also use the morphological operator \cite{MorphologicalMC}, which results in a very similar discretization of the operator $\Aff[u]$.  
We establish the accuracy of the discretization, and show that it is elliptic.  The scheme is augmented to a more accurate, but still convergent,  filtered scheme which interpolates between the standard scheme and the elliptic scheme.   We regularize the operator, which allows us to build a convergent monotone time discretization. 

\subsection{Median for $\Delta_1 u$}\label{sec:schemeMC}

In \cite{ObermanMC} an elliptic scheme for $\Delta_1 u(x)$ is presented based on taking the median of the values of $u$ sampled in a small approximately circular neighborhood of $x$.  The motivation follows from observing, using \eqref{curvature}, that  
\[
\Delta_1 u = u_{\T\T},
\qquad \T = \frac{(-u_y,u_x)}{(u_x^2+u_y^2)^{1/2}},
\]
where $\T$ is the (Euclidean) unit tangent. The median captures an approximation to $u_{\T \T}$, the second tangential derivative of $u$, since the larger values point in the direction of the gradient and the smaller values point in the opposite directions. 

We outline the scheme here.  We will define it at the reference point $(x,y)$. Let $(x_1,y_1),\ldots,(x_{n_S},y_{n_S})$ be the $n_S$ neighbours and set $d\theta = \frac{2\pi}{n_S}$. We refer to $d\theta$ as the directional resolution. Denote the value of the solution at the point $(x_i,y_i)$ by $u_i$. Set $\vi = (x_i,y_i)-(x,y)$ and $(x_{-i},y_{-i}) = (x,y)-\vi$. We choose the neighbours such that
\[(x_{i+1},y_{i+1}) = h \: n_\theta\left(\cos(i d\theta),\sin(i d\theta)\right) + (e_i,f_i)\]
where $|e_i|,|f_i| \leq h$. Thus $h$ is the spatial resolution and $n_\theta$ denotes the width of the stencil. In fact, for $n_\theta \leq 5$, the neighbours in the first quadrant are given by
\[(x_{i+1},y_{i+1}) = h\left(\lfloor n_\theta \cos(i d\theta) \rceil, \lfloor n_\theta \sin(i d\theta) \rceil\right)\]
where $i = 0,\ldots,\frac{n_S}{4}-1$, with the points on the remaining three quadrants being obtained by $\frac{\pi}{2}$, $\pi$ and $\frac{3\pi}{2}$ rotations. Here $\lfloor x \rceil$ denotes the integer closest to $x$.

%

\begin{definition}
Let $u:\R^2 \to \R$. Define the scheme
\bq\label{monotoneMC}\tag*{$(MC)^e$}
\Delta_1^e u(x,y)  = 2\frac{\underset{i=1,\ldots,n_S}\median u_i-u(x,y)}{(h \:n_\theta)^2}
\eq
that approximates $\Delta_1 u$.
\end{definition}

In general, consistency of finite difference schemes follows by Taylor expansions. However, additional care is needed when the PDE is singular, as when $\nabla u(x) = 0$ in \eqref{MCevolution}. We then recall here the definition of consistency for the mean curvature operator.

\begin{definition}
The numerical scheme $F^{h,d\theta}$ is consistent with $\Delta_1$ if for every smooth function $\phi$ and every $(x,y) \in \R^2$
\[\lim_{h,d\theta \to 0} F^{h,d\theta}[\phi] = \Delta_1\phi\]
at $(x,t)$ if $\nabla \phi(x,y) \neq 0$ and
\[\lambda \leq \liminf_{h,d\theta \to 0} F^{h,d\theta}[\phi] \leq \limsup_{h,d\theta \to 0} F^{h,d\theta}[\phi] \leq \Lambda\]
at $(x,t)$ where $\lambda,\Lambda$ are the least and greatest eigenvalues of $D^2\phi(x,y)$, otherwise.
\end{definition}

\begin{lemma}\label{degenerateellipticMC}The finite difference scheme $-\Delta_1^e u$, given by \ref{monotoneMC}, is elliptic.
\end{lemma}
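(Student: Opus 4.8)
The plan is to verify directly the algebraic monotonicity condition in the definition of an elliptic finite difference operator. First I would rewrite the scheme so that it has the required form $F^\dx[u](x) = F^\dx(x, u(x), u(x)-u(\cdot))$, depending on $u$ only through the differences $u(x,y) - u_i$. The key observation is that the median commutes with translations and sign changes: for reals $a_1,\dots,a_{n_S}$ and a constant $c$ one has $\median_i(c - a_i) = c - \median_i a_i$, since negation reverses the sorted order and adding $c$ shifts it. Applying this with $c = u(x,y)$ and $a_i = u_i$ gives
\[
-\Delta_1^e u(x,y) = \frac{2}{(h\,n_\theta)^2}\bigl(u(x,y) - \median_i u_i\bigr) = \frac{2}{(h\,n_\theta)^2}\,\median_i\bigl(u(x,y) - u_i\bigr),
\]
so in the notation of the definition $F^\dx(x, r, v(\cdot)) = \frac{2}{(h n_\theta)^2}\median_i v_i$, which in fact does not depend on $r$ and sees the neighbouring values only through $v_i = u(x) - u_i$. (This reflects the fact that $\Delta_1^e$ annihilates constants, hence depends only on differences.)

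Second, I would record the elementary fact that the median is a non-decreasing function of its arguments, i.e.\ $\median \in ND(\R^{n_S})$: the $k$-th order statistic is non-decreasing in each input, and the median is either an order statistic (for $n_S$ odd) or the average of two consecutive order statistics (for $n_S$ even), hence non-decreasing in each coordinate. This is the only place where it matters that we use the median rather than, say, an average of all the $u_i$.

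Finally, ellipticity follows immediately: if $r \le s$ and $v(\cdot) \le w(\cdot)$ componentwise, then $\median_i v_i \le \median_i w_i$ by monotonicity of the median, and multiplying by the positive constant $2/(h n_\theta)^2$ preserves the inequality, so $F^\dx(x,r,v(\cdot)) \le F^\dx(x,s,w(\cdot))$, which is exactly the ellipticity condition.

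I do not expect a serious obstacle here; the two points requiring (minor) care are the rewriting of the scheme in difference form via the translation/negation invariance of the median, and the verification that the median is genuinely monotone in each argument. Together these reduce the claim to the building-block principle used throughout Section~\ref{sec:schemeEikonal}: a positive multiple of a non-decreasing function composed with the (trivially elliptic) difference operators $u(x)-u_i$ is elliptic.
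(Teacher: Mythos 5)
Your argument is correct and is essentially the paper's own proof: rewrite $-\Delta_1^e u$ as a positive multiple of $\median_i(u(x,y)-u_i)$ using the translation/negation invariance of the median, and conclude from the fact that the median is nondecreasing in each argument. You simply spell out the two steps the paper leaves implicit.
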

\begin{proof}
We can rewrite the scheme as
\[-\Delta_1^e u = 2\frac{\underset{i=1,\ldots,n_S}\median (u(x,y)-u_i)}{(h \:n_\theta)^2}.\]
Since $\median$ is a nondecreasing function, we can immediately conclude that $\Delta_1^e u$ is elliptic.
\end{proof}

\begin{lemma}\label{consistencyMC}
Let $(x,y) \in \Omega$ be a reference point on the grid and $\phi$ be a smooth function that is defined in a neighborhood of the grid. Then the scheme $\Delta_1^e \phi$ defined by \ref{monotoneMC} is consistent.  Further, it approximates $\Delta_1$ with accuracy
\[\Delta_1^e \phi(x,y) = \Delta_1 \phi(x,y) + \bO\left((h \: n_\theta)^2 + d\theta\right),\]
when $\Abs{\nabla u(x,y)} \neq 0$.
\end{lemma}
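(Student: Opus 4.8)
The plan is to reduce the statement to a Taylor expansion of the neighbour values together with a counting argument for the median. Write $r = h\,n_\theta$ for the stencil radius and let $\vi$ denote the displacement of the $i$-th neighbour, so that $\vi = r\,(\cos\theta_i,\sin\theta_i) + \bO(h)$ with the angles $\theta_i$ forming, up to the $\bO(h)$ rounding of neighbour positions, a uniform grid of resolution $d\theta$; recall that by construction the neighbours occur in antipodal pairs ($\vi$ a displacement $\Rightarrow$ $-\vi$ a displacement) and the angular grid is invariant under $\theta\mapsto\theta+\pi$. Since $\phi$ is smooth, Taylor's formula gives $u_i - u(x,y) = \grad\phi(x,y)\cdot\vi + \tfrac12\,\vi^\intercal D^2\phi(x,y)\,\vi + \bO(r^3)$, and adding this to the identity for the antipodal neighbour cancels the odd-order terms:
\[
\bigl(u_i - u(x,y)\bigr) + \bigl(u_{-i} - u(x,y)\bigr) = \vi^\intercal D^2\phi(x,y)\,\vi + \bO(r^4).
\]

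First I would dispatch the degenerate case $\grad\phi(x,y)=0$. Then $u_i - u(x,y) = \tfrac12\vi^\intercal D^2\phi\,\vi + \bO(r^3)$ lies between $\tfrac{\lambda}{2}\Abs{\vi}^2$ and $\tfrac{\Lambda}{2}\Abs{\vi}^2$ up to $\bO(r^3)$, where $\lambda,\Lambda$ are the extreme eigenvalues of $D^2\phi(x,y)$; since $\Abs{\vi}^2 = r^2(1+\bO(d\theta))$, so does the median, and multiplying by $2/r^2$ gives $\lambda + o(1) \le \Delta_1^e\phi(x,y) \le \Lambda + o(1)$ as $h,d\theta\to 0$, which is exactly the consistency requirement for $\Delta_1$ at a point where the gradient vanishes.

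For the accuracy bound I would assume $\grad\phi(x,y)\neq 0$ and, after rotating coordinates — which only shifts the still-$d\theta$-spaced angles $\theta_i$ by a constant — take $\grad\phi(x,y)=(0,p)$, $p>0$, so that $\T=(1,0)$ and $\Delta_1\phi = \phi_{xx}=\phi_{\T\T}$. Then $u_i - u(x,y) = p\,r\sin\theta_i + \tfrac{r^2}{2}\,\hat\theta_i^\intercal D^2\phi\,\hat\theta_i + \bO(r^3)$. For $\theta_i$ at angular distance at least $\epsilon$ from $\{0,\pi\}$, with $\epsilon$ a suitable constant multiple of $r$ (chosen large relative to $\Norm{D^2\phi}/p$), the first-order term dominates and $u_i - u(x,y)$ has the sign of $\sin\theta_i$; by the $\theta\mapsto\theta+\pi$ symmetry of the grid, exactly as many directions satisfy $\sin\theta_i>\sin\epsilon$ as satisfy $\sin\theta_i<-\sin\epsilon$, so a counting argument places the median among the $\bO(r/d\theta)$ neighbours whose direction lies within $\bO(r)$ of the tangent line. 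On that arc the antipodal-pair means satisfy
\[
\tfrac12\bigl[(u_i - u(x,y)) + (u_{-i} - u(x,y))\bigr] = \tfrac{r^2}{2}\phi_{xx} + \bO(r^2 d\theta + r^4),
\]
the $\bO(r^2 d\theta)$ coming from the offset between $\T$ and the nearest grid direction, while the two members of each pair split around their mean by $\pm p\,r\sin\theta_i$. Since the near-tangent directions and their antipodes form two interleaved arithmetic progressions with common centre $\tfrac{r^2}{2}\phi_{xx}+\bO(r^2 d\theta)$ and nearly equal steps, the median of the whole collection equals $\tfrac{r^2}{2}\phi_{xx}$ up to $\bO(r^2 d\theta + r^4)$. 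Dividing by $\tfrac12 r^2 = \tfrac12(h\,n_\theta)^2$ gives $\Delta_1^e\phi(x,y) = \Delta_1\phi(x,y) + \bO\bigl((h\,n_\theta)^2 + d\theta\bigr)$, and in particular consistency.

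The hard part is this final median computation. The naive estimate from a single near-tangent neighbour only yields $\bO(h\,n_\theta + d\theta)$, since the third-order Taylor term is $\bO(r^3)$, too large after division by $r^2$; recovering the sharp rate forces one to use \emph{together} the antipodal cancellation of odd-order terms, the $\theta\mapsto\theta+\pi$ symmetry of the angular grid (to balance the sign counts and to render the near-tangent values symmetric about $\tfrac{r^2}{2}\phi_{\T\T}$ up to the $\bO(r^2 d\theta)$ offset), and a careful comparison of the $\bO(h)$ position-rounding errors against the radius $r=h\,n_\theta$, which relies on the packing estimate $n_\theta\gtrsim 1/d\theta$. The analogous computation for the mean curvature median scheme is carried out in \cite{ObermanMC}, and the argument transfers to the present, slightly different stencil.
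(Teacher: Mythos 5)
First, a point of comparison: the paper does not actually prove Lemma~\ref{consistencyMC} --- it states it and defers entirely to \cite{ObermanMC} --- so your proposal has to stand on its own. Its architecture (Taylor expansion, cancellation of odd-order terms over antipodal pairs, a sign-counting argument localizing the median to the near-tangent directions, and a separate treatment of the degenerate case $\grad\phi=0$) is the right one and is indeed the strategy of the cited reference; the degenerate case and the localization of the median to the arc of directions within $\bO(r)$ of the tangent are fine.

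The gap is in the final median computation. You assert that on the whole arc of angular width $\bO(r)$ the antipodal-pair means equal $\tfrac{r^2}{2}\phi_{xx}+\bO(r^2\,d\theta+r^4)$, the $\bO(r^2 d\theta)$ coming only from the offset of the nearest grid direction from $\T$. That is false for the pairs at the edge of the arc: in your rotated frame the mean of the pair at angle $\theta_j$ is $\tfrac{r^2}{2}\bigl(\phi_{xx}+2\phi_{xy}\sin\theta_j\cos\theta_j+(\phi_{yy}-\phi_{xx})\sin^2\theta_j\bigr)+\bO(r^4)$, which deviates from $\tfrac{r^2}{2}\phi_{xx}$ by $\Theta(r^2\Abs{\theta_j})$, i.e.\ by $\Theta(r^3)$ when $\Abs{\theta_j}\sim r$ and $\phi_{xy}\neq 0$ in that frame. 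Hence the collection is ``two interleaved arithmetic progressions with a common centre'' only after perturbing each element by $\bO(r^3)$, and such a perturbation can move the median by $\bO(r^3)$; taken at face value your argument therefore yields only $\bO(h\,n_\theta+d\theta)$ --- exactly the naive rate you set out to beat. The repair is to localize the median more sharply than to the $\bO(r)$ arc: writing each antipodal pair as $\{m_j-s_j,\,m_j+s_j\}$ with centre $m_j$ and half-spread $s_j$, one shows that the two middle order statistics are (essentially) $\max_j(m_j-s_j)$ and $\min_j(m_j+s_j)$, so that the median is sandwiched between the \emph{centres} of the two extremizing pairs, and that those extremizing pairs lie within $\bO(d\theta+r^2)$ of the tangent, since their half-spreads must nearly minimize $p\,r\Abs{\sin\theta_j}$ up to the $\bO(r^3)$ odd-order corrections. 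Only for pairs in that smaller range does one have $m_j=\tfrac{r^2}{2}\phi_{\T\T}+\bO(r^2 d\theta+r^4)$, which after division by $\tfrac12 r^2$ gives the claimed $\bO\bigl((h\,n_\theta)^2+d\theta\bigr)$. This refinement is precisely the delicate step carried out in \cite{ObermanMC}, to which you (like the paper) ultimately defer.
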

\begin{remark}
Since $d\theta = \bO(\frac{1}{n_\theta})$, the ``optimal'' choice is $h = \bO(n_\theta^{-3/2})$, which results in accuracy of $\bO(h^{2/3})$.  However this also requires a dense stencil.  In practice, we use a fairly narrow stencil, and combine with a filtered scheme for more accuracy.
\end{remark}

The proof can be found in \cite{ObermanMC}. 

\subsection{Elliptic scheme for $-\Aff[u]$}

We now construct an elliptic scheme for $-\Aff[u]$ following the same procedure as in \autoref{subsec:elliptic1D}. This is accomplished by writing $\Aff[u]$ in terms of $|\nabla u|$ and $\Delta_1 u$ as
\begin{align*}
\Aff[u]	& = \Abs{\nabla u} k[u]^{1/3} 	 = (\Abs{\nabla u}^2 \Delta_1 u)^{1/3} = A(\Abs{\nabla u},\Delta_1 u)
\end{align*}
and using the elliptic schemes $\abs{\nabla u^h}^+$ and $\abs{\nabla u^h}^-$ presented in \autoref{sec:schemeEikonal} and $\Delta_1^e u$ described in \autoref{sec:schemeMC}. 
\begin{remark}
We chose to discretize $\Delta_1u$ with the median scheme \ref{monotoneMC}. However, other schemes could have been, for instance the morphological scheme \cite{MorphologicalMC}.
\end{remark}

\begin{definition}
Let $u:\R^2\to\R$. We define the scheme
\begin{align}\tag*{$(AC)^e$}\label{monotoneAC}
-\Aff^e[u]	= 
A^+\left(\Abs{\nabla u^h}^+,  -\Delta_1^e u\right) + A^-\left( -\Abs{\nabla u^h}^-, -\Delta_1^e u \right )
\end{align}
that approximates $-\Aff[u]$.
\end{definition}

\begin{remark}
The above approach can be generalized to obtain an elliptic scheme for the elliptic operator $-\Abs{\nabla u} G(k[u])$, where $G$ is nondecreasing and homogeneous of order  $\alpha \le 1$,  $G(t r) = t^\alpha G(r)$. (This PDE represents curves evolving with normal speed $G(k)$). In such cases, write
\begin{align*}
\Abs{\nabla u} G(k[u]) = \Abs{\nabla u}^{1-\alpha} G(\Abs{\nabla u} k[u]) = \Abs{\nabla u}^{1-\alpha} G(\Delta_1 u).	
\end{align*}
The elliptic scheme is then given by
\[\left(|\nabla u^h|^+\right)^{1-\alpha} G\left(\left(-\Delta_1^e u\right)^+\right) +\left(|\nabla u^h|^-\right)^{1-\alpha} G\left(\left(-\Delta_1^e u\right)^-\right).\]
\end{remark}

Now we prove the ellipticity and consistency of the scheme $-\Aff^e[u]$.

\begin{lemma}\label{lemma:elliptic}
The finite difference scheme $-\Aff^e[u]$, given by \ref{monotoneAC}, is elliptic.
\end{lemma}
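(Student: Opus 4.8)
The plan is to mirror the argument of Lemma~\ref{lemma:consistencyellipticAC1D}, now in two dimensions, using two elementary closure properties of the class of elliptic finite difference operators, both immediate from the definition: (i) if $\Psi:\R^k\to\R$ is nondecreasing in each variable and $G_1,\dots,G_k$ are elliptic finite difference operators, then $u\mapsto \Psi\bigl(G_1[u],\dots,G_k[u]\bigr)$ is elliptic; and (ii) the sum of elliptic operators is elliptic. Property (i) holds because $r\le s$ and $v(\cdot)\le w(\cdot)$ give $G_i(x,r,v(\cdot))\le G_i(x,s,w(\cdot))$ for each $i$ by ellipticity, and monotonicity of $\Psi$ preserves the inequality; property (ii) is obvious.

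First I would assemble the building blocks. The operators $\Abs{\nabla u^h}^+$ and $-\Abs{\nabla u^h}^-$ were shown to be elliptic in Section~\ref{sec:schemeEikonal}, and $-\Delta_1^e u$ is elliptic by Lemma~\ref{degenerateellipticMC}. Next I would recall from Lemma~\ref{lemma:aux_elliptic} that $A^+\in ND^+(\R^2)$ and $A^-\in ND^-(\R^2)$; in particular each is nondecreasing in both of its arguments on all of $\R^2$, so there is no domain-of-ellipticity restriction to worry about when the (sign-changing) operator $-\Delta_1^e u$ is substituted for the second argument.

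The conclusion is then immediate: $A^+\bigl(\Abs{\nabla u^h}^+,\,-\Delta_1^e u\bigr)$ is a nondecreasing function of two elliptic operators, hence elliptic by (i), and likewise $A^-\bigl(-\Abs{\nabla u^h}^-,\,-\Delta_1^e u\bigr)$ is elliptic; by (ii) their sum $-\Aff^e[u]$, which is exactly the scheme \ref{monotoneAC}, is elliptic.

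There is no genuine obstacle here — it is a routine verification. The one point requiring a moment's care is precisely the one isolated in Lemma~\ref{lemma:aux_elliptic}: the raw function $A(p,q)$ is \emph{not} monotone where $pq<0$, so one must work with the honestly monotone substitutes $A^+,A^-$ rather than $A$, and one must check that \ref{monotoneAC} indeed feeds $\Abs{\nabla u^h}^+\ge 0$ into the first slot of $A^+$ and $-\Abs{\nabla u^h}^-\le 0$ into the first slot of $A^-$, consistently with the decomposition $-A(p,q)=A^+(\abs p,-q)+A^-(-\abs p,-q)$ that underlies both ellipticity and consistency. Once that bookkeeping is confirmed, ellipticity is purely formal.
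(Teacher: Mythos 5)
Your argument is correct and is exactly the paper's route: the paper's proof of this lemma simply says it is similar to Lemma~\ref{lemma:1Delliptic}, i.e.\ one composes the nondecreasing functions $A^+$ and $A^-$ from Lemma~\ref{lemma:aux_elliptic} with the elliptic building blocks $\Abs{\nabla u^h}^+$, $-\Abs{\nabla u^h}^-$, and $-\Delta_1^e u$, and sums. Your explicit statement of the two closure properties and the bookkeeping about which slot receives which sign is a faithful (and slightly more detailed) rendering of the same proof.
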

\begin{proof}
The proof is similar to Lemma \ref{lemma:1Delliptic}.
\end{proof}

\begin{lemma}\label{lemma:consistencyAC}
Let $(x,y) \in \Omega$ be a reference point on the grid and $\phi$ be a smooth function that is defined in a neighborhood of the grid. Then the scheme $\Aff^e[\phi]$ defined by \ref{monotoneAC} is consistent with $\Aff[\phi]$ and has accuracy
\[
\Aff^e[\phi](x,y) - \Aff[\phi](x,y) = 
\begin{cases}
\bO\left(h + (h \: n_\theta)^2+ h \: d\theta +d\theta\right), & \text{if }\Aff[\phi](x,y) \neq 0 \text{ and } \Abs{\nabla u}(x,y) \neq 0,\\
\bO\left(((h \: n_\theta)^2 +  h \: d\theta + d\theta)^{1/3}\right), & \text{if }\Aff[\phi](x,y) = 0 \text{ and } \Abs{\nabla u}(x,y) \neq 0,\\
\bO\left(h^{2/3}\right)	& \text{if }\Abs{\nabla u}(x,y) = 0.
\end{cases}
\]
\end{lemma}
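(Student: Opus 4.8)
The plan is to follow the template of the one-dimensional consistency results (Lemmas~\ref{lemma:consistencyellipticAC1D} and~\ref{lemma:accuracyregular1D}) and the standard-scheme accuracy estimate (Lemma~\ref{lemma:standardaccuracy}), feeding in the consistency of the building blocks: the one-sided gradient discretizations satisfy $\Abs{\nabla\phi^h}^{+} = \Abs{\nabla\phi}+\bO(h)$ and $-\Abs{\nabla\phi^h}^{-} = -\Abs{\nabla\phi}+\bO(h)$, while the median scheme satisfies $\Delta_1^e\phi = \Delta_1\phi + \bO((h\,n_\theta)^2 + d\theta)$ when $\Abs{\nabla\phi}(x,y)\neq 0$ (Lemma~\ref{consistencyMC}), together with the pointwise identity $-\Aff[\phi] = A^{+}(\Abs{\nabla\phi},-\Delta_1\phi) + A^{-}(-\Abs{\nabla\phi},-\Delta_1\phi)$ from Lemma~\ref{lemma:aux_elliptic}. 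Comparing this with the scheme~\ref{monotoneAC} term by term, the argument then splits into the three cases of the statement according to the sign behaviour of $\Delta_1\phi$ and $\Abs{\nabla\phi}$ at the reference point.

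In the first case, $\Abs{\nabla\phi}(x,y)\neq 0$ and $\Aff[\phi](x,y)\neq 0$, equivalently $\Delta_1\phi(x,y)\neq 0$. For $h$ and $d\theta$ small, $-\Delta_1^e\phi$ has the sign of $-\Delta_1\phi$, so exactly one of the two summands $A^{\pm}$ is active and it equals $A$ evaluated at arguments that are bounded away from $0$ and from $\infty$. Following Lemma~\ref{lemma:standardaccuracy}, I would cube the active summand, obtaining $(-\Aff^e[\phi])^3 = (\Abs{\nabla\phi^h}^{\pm})^2(-\Delta_1^e\phi)$, expand this product against $(-\Aff[\phi])^3 = \Abs{\nabla\phi}^2(-\Delta_1\phi)$, and collect the error contributions; the cross term between the $\bO(h)$ error of the squared gradient factor and the $\bO((h\,n_\theta)^2 + d\theta)$ error of $\Delta_1^e\phi$ is exactly what produces the $h\,d\theta$ term. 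Taking the cube root, which is Lipschitz near a value bounded away from $0$, then gives $\bO(h + (h\,n_\theta)^2 + h\,d\theta + d\theta)$.

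In the second case, $\Abs{\nabla\phi}(x,y)\neq 0$ but $\Aff[\phi](x,y)=0$, i.e.\ $\Delta_1\phi(x,y)=0$, so $-\Aff[\phi]=0$ while the active summand of the scheme has magnitude $\big((\Abs{\nabla\phi^h}^{\pm})^2\,\abs{\Delta_1^e\phi}\big)^{1/3}$; since $\Abs{\nabla\phi^h}^{\pm}$ is $\bO(1)$ and $\Delta_1^e\phi = \bO((h\,n_\theta)^2 + d\theta)$, the same bookkeeping yields $\bO\big(((h\,n_\theta)^2 + h\,d\theta + d\theta)^{1/3}\big)$, the cube root being what reduces the order. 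In the third case, $\Abs{\nabla\phi}(x,y)=0$; then $\Aff[\phi](x,y)=0$ outright, since the formula $\Aff[\phi]=(\phi_{xx}\phi_y^2-2\phi_x\phi_y\phi_{xy}+\phi_{yy}\phi_x^2)^{1/3}$ vanishes at critical points of $\phi$. A Taylor expansion gives $\Abs{\nabla\phi^h}^{\pm}=\bO(h)$, and the median scheme gives $\Delta_1^e\phi=\bO(1)$ (bounded by the eigenvalues of $D^2\phi$, via the consistency definition for $\Delta_1$), so each summand is $\big(\bO(h^2)\cdot\bO(1)\big)^{1/3} = \bO(h^{2/3})$.

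The main obstacle is the failure of $A(p,q)=(p^2q)^{1/3}$ to be Lipschitz, indeed continuously differentiable, along the axes $p=0$ and $q=0$. The scheme is accurate at all only because the decomposition of Lemma~\ref{lemma:aux_elliptic} guarantees that at most one of the two summands is active and that its cubed value is a genuine product, so the estimate reduces to the scalar cube root applied to a product; this must be tracked carefully to get the correct powers of $h$ — in particular the degradation to $h^{2/3}$ when $\Aff[\phi]$ or $\Abs{\nabla\phi}$ vanishes — and to confirm that no accuracy is lost relative to the standard scheme off these degenerate sets. A secondary point requiring care is the critical-point case, where the naive Taylor expansion of the median scheme breaks down and one must instead invoke its refined, viscosity-type consistency to obtain the uniform bound on $\Delta_1^e\phi$.
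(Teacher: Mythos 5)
Your proposal is correct and follows essentially the same route as the paper: feed the $\bO(h)$ consistency of $\left(\Abs{\nabla\phi^h}^{\pm}\right)^2$ and the $\bO((h\,n_\theta)^2+d\theta)$ consistency of $\Delta_1^e\phi$ into the cubed operator, then take cube roots, using the Lipschitz expansion of $r\mapsto r^{1/3}$ away from zero in the nondegenerate case and the H\"older-$1/3$ behaviour at zero in the degenerate cases, with the critical-point case handled by $\Abs{\nabla\phi^h}^{\pm}=\bO(h)$ and boundedness of $\Delta_1^e\phi$. Your additional remarks about which summand of the $A^{\pm}$ decomposition is active and the origin of the $h\,d\theta$ cross term are consistent with, and slightly more explicit than, the paper's argument.
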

\begin{proof}
Suppose first that $\Abs{\nabla u}(x,y) \neq 0$. We have
\[\Abs{\nabla \phi(x,y)}^2 = \left(|\nabla \phi^h(x,y)|^\pm\right)^2 + \bO(h) \quad \text{and} \quad \Delta_1^e \phi(x,y) = \Delta_1 \phi(x,y) + \bO\left(d\theta + (h \: n_\theta)^2\right).\]
(See Lemma \ref{consistencyMC}).
Hence, when $\Aff[\phi](x,y) \neq 0$,
\[\Aff^e[\phi]^3 = \Aff[\phi]^3 + \bO\left(h + (h \: n_\theta)^2 + h \: d\theta +d\theta\right)\]
 and, by the Taylor expansion like in Lemma \ref{lemma:standardaccuracy}, we have $\Aff^e[\phi](x,y) = \Aff[\phi](x,y) + \bO\left(h + (h \: n_\theta)^2 + h \: d\theta +d\theta\right)$. Otherwise, when $\Aff[\phi](x,y) = 0$, we necessarily have $\Delta_1 \phi = 0$ and so
\[\Aff^e[\phi]^3 = \Aff[\phi]^3 + \bO\left((h \: n_\theta)^2 + h \: d\theta +d\theta\right).\]
Therefore, $\Aff^e[\phi](x,y) = \Aff[\phi](x,y) + \bO\left(((h \: n_\theta)^2 +  h \: d\theta + d\theta)^{1/3}\right)$.

When $\Abs{\nabla u}(x,y) = 0$,
\[\Abs{\nabla \phi(x,y)}^2 = \left(|\nabla \phi^h(x,y)|^\pm\right)^2 + \bO(h^2)\]
with $\Delta_1^e u$ being bounded. Hence, $\Aff^e[u] = \Aff[u] + \bO(h^{2/3})$.
\end{proof}
\begin{remark}\label{rmk:accuracyAC}
Since $d\theta = \bO(\frac{1}{n_\theta})$, the ``optimal'' choice is $h = \bO(n_\theta^{-3/2})$, which results in accuracy of $\bO(h^{2/9})$ in the worst case.  However this also requires a dense stencil.  In practice, we use a fairly narrow stencil, and combine with a filtered scheme for more accuracy.
\end{remark}

\subsection{Filtered scheme for $\Aff[u]$}\label{subsec:filter}

The filtered scheme, $\Aff^f[u]$, first introduced in \cite{FroeseObermanFiltered}, is defined to be a continuous interpolation between the accurate and the elliptic scheme, which equals the accurate scheme when the two schemes are within $\e$ of each other.

Here, we define filtered schemes by making use of the auxiliary function $S^\e:\R^2 \to \R$, defined to be a continuous function which for $(a,b) \in R^2$,  is equal to $a$ near the diagonal and $b$ off the diagonal. 
\begin{definition}
Define for $\epsilon > 0$, $A_\epsilon = \left\{(a,b) \in \R^2 \mid |a-b| < \epsilon\right\}$. Set $\rho = 10\epsilon$.   Define $d = {\dist}\left((a,b),A_\epsilon\right)$. 
\[
S^\e(a,b) = \begin{cases}
a							& \text{if } (a,b) \in A_\epsilon,\\
\frac{\rho-d}{\rho}a+\frac{d}{\rho}b	& \text{if } d \leq \rho,\\
b							& \text{otherwise},
\end{cases}
\]
Define the filtered scheme 
\bq\tag*{$(AC)^f$}\label{filterAC}
\Aff^f[u] = S^\e\left(\Aff^a[u],\Aff^e[u]\right)
\eq
where $\e = \e(h,d\theta)$.
\end{definition}

While theoretically, the only requirement on $\epsilon$ to ensure the convergence of the filtered schemes is that $\e \to 0$ as $h,d\theta\to0$, in practice $\epsilon$ must be chosen carefully. Intuitively, it should be large enough to permit the accurate scheme to be active where the solution is smooth (as shown in the next lemma), and small enough to force the monotone scheme to be active whenever needed for convergence (for instance, when the solution is singular)	.

\begin{lemma}
Suppose that the formal discretization errors of the schemes $\Aff^e$ and $\Aff^e$ are $\bO(r^{\beta_a})$ and $\bO(r^{\beta_e})$, respectively. Choose $\alpha$ such that $\beta_a > \beta_e > \alpha > 0$. Then if $\phi$ is smooth and $\epsilon = r^\alpha$, $\Aff^f[\phi] = \Aff^a[\phi]$.
\end{lemma}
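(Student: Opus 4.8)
The plan is to show that the two arguments of $S^\e$ are within $\e$ of each other once $r$ is small enough, so that the first branch in the definition of $S^\e$ is selected.

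First I would invoke the triangle inequality against the exact operator value $\Aff[\phi](x,y)$: since both schemes are consistent with accuracy $\bO(r^{\beta_a})$ and $\bO(r^{\beta_e})$ respectively, I would write
\[
\Abs{\Aff^a[\phi] - \Aff^e[\phi]} \le \Abs{\Aff^a[\phi] - \Aff[\phi]} + \Abs{\Aff[\phi] - \Aff^e[\phi]} = \bO(r^{\beta_a}) + \bO(r^{\beta_e}).
\]
Because $\beta_a > \beta_e$, the right-hand side is $\bO(r^{\beta_e})$; that is, there is a constant $C$ (depending on $\phi$ and its derivatives on the relevant neighbourhood, but not on $r$) with $\Abs{\Aff^a[\phi] - \Aff^e[\phi]} \le C r^{\beta_e}$ for all small $r$.

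Next I would compare this with $\e = r^\alpha$. Since $\beta_e > \alpha > 0$, we have $C r^{\beta_e} / r^\alpha = C r^{\beta_e - \alpha} \to 0$ as $r \to 0$, so for $r$ sufficiently small $C r^{\beta_e} < r^\alpha = \e$. Hence $\big(\Aff^a[\phi],\Aff^e[\phi]\big) \in A_\e$, and by the definition of $S^\e$ and of the filtered scheme \ref{filterAC} we get $\Aff^f[\phi] = S^\e\big(\Aff^a[\phi],\Aff^e[\phi]\big) = \Aff^a[\phi]$, as claimed.

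I do not expect any real obstacle here; the only point that requires a little care is that the implied constant in the consistency estimates is uniform in the grid point and in $r$ over the region where $\phi$ is smooth, which is exactly what the $\bO$ notation in the hypothesis encodes (one may, if desired, restrict to a fixed compact neighbourhood of the reference point to make this explicit). The argument is otherwise just the triangle inequality plus the elementary fact that $r^{\beta_e}$ is eventually dominated by $r^\alpha$ when $\beta_e > \alpha$.
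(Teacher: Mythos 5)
Your proposal is correct and follows essentially the same route as the paper's proof: a triangle inequality through the exact operator gives $\Aff^a[\phi]-\Aff^e[\phi] = \bO(r^{\beta_a})+\bO(r^{\beta_e}) = \bO(r^{\beta_e})$, which is eventually below $\e = r^\alpha$ since $\beta_e > \alpha$, so the first branch of $S^\e$ is active. You merely spell out the constant-tracking and the "for $r$ sufficiently small" qualifier that the paper leaves implicit.
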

\begin{proof}
If $\phi$ is smooth then 
\[
{F^a[\phi]-F^e[\phi]} = {\bO(r^{\beta_a})+\bO(r^{\beta_e})} = \bO(r^{\beta_e})  \leq \bO(\e)
\]
so using the definition of $\Aff^f$ it follows that $\Aff^f[\phi] = \Aff^a[\phi]$, as desired.
\end{proof}

\begin{remark}\label{rmk:choice_epsilon}
The lemma tells us that heuristically we could choose $\epsilon = \sqrt{\text{Acc}[\Aff^e]}$, where $\text{Acc}[\Aff^e]$ denotes the accuracy of $\Aff^e$. In the numerical results presented here, we defined $\epsilon$ based on the accuracy of the scheme away from the singularities of $\Aff[u]$.
\end{remark}

In practice, the filtered scheme allows us to neglect the error coming from the wide stencil, while in theory we still need to send $d\theta \to 0$ for convergence of the filtered scheme. In our numerical examples, we obtain the accuracy of the accurate scheme in most cases.

\subsection{Regularization and Forward Euler method}

As in the for the one dimensional model equation (see \autoref{subsec:Lipschitz1D}), in order to build a provably convergent scheme for the time dependent equation \eqref{ACPDE} we need to regularize the operator.

Write $\Aff[u] = A(\Abs{\nabla u},\Delta_1 u)$, where $A(p,q) = (p^2q)^{1/3}$, and regularize the cube root function as before. This leads to
\[
\Aff^\delta[u]  = A^\delta(\Abs{\nabla u},\Delta_1 u),
\]
where $A^\delta(p,q)  =	\sgn(q)\min( \abs{A(p,q)}, K \abs{p}, L \abs{q} )$.

\begin{remark}
The  regularized operator, $\Aff^\delta[u]$, is still a level set operator. To see this, it is enough to take $K = L = 1/\delta$:
\[\Aff^\delta[u] = \Abs{\nabla u} \sgn(k[u])\min\left(\Abs{k[u]}^{1/3},\frac{\Abs{k[u]}}{\delta},\frac{1}{\delta}\right),\]
The operator reduces to either a multiple of the mean curvature operator, or  the Eikonal equation and otherwise we obtain $\Aff[u]$.
\end{remark}

Similar results regarding ellipticity and consistency hold as for the one-dimensional model, which we present without proof.

\begin{lemma}
For $K=K(\delta)$, $L=L(\delta)$ such that $K\sqrt{L}\geq1$, define the finite difference scheme
\begin{align}\label{elliptic_scheme_regular}\tag*{$(AC)^{e,\delta}$}
-\Aff^{e,\delta}[u] = A^{\delta,+}\left(|\nabla u^h|^+,  -\Delta_1^e u \right) + A^{\delta,-}\left( -|\nabla u^h|^-, -\Delta_1^e u \right )
\end{align}
Then, $-\Aff^{e,\delta}[u]$ is elliptic and consistent with $-\Aff^\delta[u]$.
\end{lemma}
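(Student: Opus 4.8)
The plan is to mirror the one-dimensional argument from Lemma~\ref{lemma:1Delliptic} almost verbatim, replacing the one-dimensional building blocks $\Abs{u_x^h}^\pm$ and $-u_{xx}^h$ by their two-dimensional counterparts $\Abs{\nabla u^h}^\pm$ and $-\Delta_1^e u$. The key structural fact that makes this work is the decomposition lemma for $A^\delta$ already established above, namely
\[
-A^\delta(p,q) = A^{\delta,+}(\abs{p},-q) + A^{\delta,-}(-\abs{p},-q),
\]
together with the facts $A^{\delta,+} \in ND^+(\R^2)$ and $A^{\delta,-} \in ND^-(\R^2)$. Since ellipticity of a composed scheme follows from composing nondecreasing functions with elliptic operators, the only thing I need to supply is that the three inner operators are elliptic and consistent with the right quantities.

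For ellipticity: first recall (from the examples in \autoref{sec:schemeEikonal}) that $\Abs{\nabla u^h}^+$ is elliptic, nonnegative, and consistent with $\Abs{\nabla u}$, while $-\Abs{\nabla u^h}^-$ is elliptic, nonpositive, and consistent with $\Abs{\nabla u}$ as well; and recall from Lemma~\ref{degenerateellipticMC} that $-\Delta_1^e u$ is elliptic (it is the median of the nondecreasing differences $u(x,y)-u_i$, up to the positive constant $2/(h\,n_\theta)^2$). Now plug $\Abs{\nabla u^h}^+$ into the first slot of $A^{\delta,+}$, plug $-\Delta_1^e u$ into the second slot, and do likewise for $A^{\delta,-}$ with $-\Abs{\nabla u^h}^-$. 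Because $A^{\delta,+}$ and $A^{\delta,-}$ are nondecreasing in each argument and the inner operators are elliptic, each summand is an elliptic finite difference operator; the sum of elliptic operators is elliptic, so $-\Aff^{e,\delta}[u]$ is elliptic. I would state this exactly as ``the proof is identical to that of Lemma~\ref{lemma:1Delliptic}'' and only point out the substitution of building blocks, since the paper has already chosen to omit the proof.

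For consistency with $-\Aff^\delta[u]$: by the decomposition identity, at a smooth test function $\phi$ with $\Abs{\nabla\phi}\neq 0$ we have $\Abs{\nabla\phi^h}^\pm \to \Abs{\nabla\phi}$ and $\Delta_1^e\phi \to \Delta_1\phi$ (Lemma~\ref{consistencyMC}), and since $A^{\delta,+}$ and $A^{\delta,-}$ are continuous (each is a min/max of continuous functions), the composition converges to $A^{\delta,+}(\Abs{\nabla\phi},-\Delta_1\phi) + A^{\delta,-}(-\Abs{\nabla\phi},-\Delta_1\phi) = -A^\delta(\Abs{\nabla\phi},\Delta_1\phi) = -\Aff^\delta[\phi]$. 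At points where $\Abs{\nabla\phi}=0$ one uses instead the degenerate form of consistency for the median scheme (the $\liminf/\limsup$ condition) together with the boundedness of $\Delta_1^e\phi$ and the fact that $\Abs{A^\delta(p,q)}\le K\abs{p}\to 0$ as $\Abs{\nabla\phi^h}^\pm\to 0$, so the composed scheme is squeezed to $0$, which is the correct value since $\Aff^\delta$ vanishes when the gradient vanishes (it is dominated by $K\Abs{\nabla u}$).

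I do not expect a serious obstacle here: the lemma is deliberately a two-dimensional transcription of results already proven in \autoref{subsec:Lipschitz1D} and \autoref{sec:schemeMC}, and the paper itself says it is presented ``without proof.'' The only mildly delicate point — the one I would flag as needing the most care — is the behaviour at the singular set $\Abs{\nabla u}=0$, where ``consistency'' for $\Delta_1^e$ is only the weaker two-sided eigenvalue sandwich rather than a genuine limit; but since $A^\delta$ is globally bounded by $K\Abs{\nabla u}$, this weaker statement is more than enough to force the right limiting value, so even that point is routine.
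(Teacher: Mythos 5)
Your proposal is correct and follows exactly the route the paper intends: the paper omits this proof precisely because it is the two-dimensional transcription of Lemma~\ref{lemma:1Delliptic}, using the decomposition $-A^\delta(p,q) = A^{\delta,+}(\abs{p},-q)+A^{\delta,-}(-\abs{p},-q)$ with $A^{\delta,\pm}$ nondecreasing, composed with the elliptic building blocks $\Abs{\nabla u^h}^\pm$ and $-\Delta_1^e u$. Your extra care at the singular set $\Abs{\nabla\phi}=0$, using the bound $\abs{A^\delta(p,q)}\le K\abs{p}$ to squeeze the scheme to the correct value despite the weaker consistency of the median scheme there, is consistent with (and slightly more explicit than) what the paper leaves implicit.
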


As with the one-dimensional case, $K$ and $L$ need to be chosen appropriately in order for $-\Aff^{e,\delta}[u]$ to be consistent with $-\Aff[u]$. Assuming $K = \bO(h^{-\alpha})$, $L = \bO(h^{-\beta})$ and $d\theta = \bO(1/n_\theta) = \bO(h^\gamma)$, full consistency is obtained when $\alpha \in (0,1)$, $\beta \in (0,\frac{2}{3})$ and $\gamma \in (\beta,\frac{2-\beta}{2})$. The optimal choices are $\alpha \in [1/9,7/9]$, $\beta = 4/9$, $\gamma = 2/3$ and, in such case, the accuracy is $\bO(h^{2/9})$. The proof is similar to Lemma \ref{lemma:accuracyregular1D}. As in the one dimensional model, no accuracy is lost with the regularization (see Remark \ref{rmk:accuracyAC}).

The Lipschitz constant of $\Aff^{e,\delta}[u]$ is given by
\bq\label{Lipconst}
C^h = \frac{K}{h} + \frac{2 L}{(h \: n_\theta)^2},
\eq
with the proof similar to Lemma \ref{lemma:Lipconst1D}. In practice, we will choose $K = c_K h^{-1/9}$ and $L = c_L h^{-4/9}$, which leads to
\[C^h = c_Kh^{-10/9} +2\frac{c_L}{{n_\theta}^2}h^{-22/9}.\]

We can finally define and prove the convergence of the discretizations for the time dependent equation \eqref{ACPDE}. The time derivative is discretized with an explicit forward Euler step, while $\Aff[u]$ is discretized using either the elliptic scheme $\Aff^{e,\delta}$ or the regularized filtered scheme $\Aff^{f,\delta}[u]$ (this is easily defined by replacing the elliptic scheme in $\Aff^f$ by its regularized version). This leads to monotone (resp. filtered) time discretization with solution map given by
\bq\label{discretizationtime}
u(\cdot,t+dt) = u(\cdot,t) + dt \Aff^{e,\delta}[u(\cdot,t)], \quad \left(\text{resp. } = u(\cdot,t) + dt \Aff^{f,\delta}[u(\cdot,t)\right).
\eq

\subsection{Convergence Theorems}

Having proved the ellipticity and consistency of the schemes, the uniform convergence follows as discussed in \autoref{sec:convergencetheory},  provided there exists unique viscosity solutions to the PDEs  \eqref{ACPDE} and $\Aff[u] = f$ along with the homogeneous Neumman boundary conditions, which is assured by the theory in \cite{giga2006surface}, as explained above. 

 We also need existence of solutions to the elliptic and filtered schemes.  These do not need to be unique to apply the convergence theory, but existence and uniqueness results for the schemes follow from the discrete comparison principle and from finite dimensional fixed point theorems, as in \cite{ObermanSINUM} and \cite{FroeseObermanFiltered}.  In practice, we can get discrete comparison from a general theorem by adding a small multiple of $u$ to the scheme, \cite{ObermanSINUM}, or we can do extra work to prove it directly. 

There are two convergence results.  The first is for the elliptic problem, where there is no need for regularization.  The second is for the parabolic problem, where we need to regularize and use an explicit Euler time step. 

\begin{theorem} Let $\Omega$ be a bounded convex domain with a $C^2$ boundary and $u$ denote the unique viscosity solution of $\Aff[u] = f$ in $\Omega$, along with homogeneous Neumann boundary conditions on $\partial \Omega$. For each $\epsilon = \epsilon(h,d\theta) >0$,  let $u^{e,\epsilon}$, (resp. $u^{f,\epsilon}$) be the uniformly bounded solution of $\Aff^e[u] = f$,  (resp. $\Aff^f[u] = f$). Then 
\[
u^{e,\epsilon} \to u \text{ and }  u^{f,\epsilon} \to u, \quad \text{ locally uniformly,  as } \epsilon \to 0.
\]
\end{theorem}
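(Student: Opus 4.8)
The plan is to verify the three hypotheses of the Barles--Souganidis framework (Theorem~\ref{BStheorem}) for each of the two schemes $\Aff^e$ and $\Aff^f$, applied to the static equation $\Aff[u] = f$ with homogeneous Neumann boundary conditions, and then invoke the theorem. The PDE side is already in place: the comparison principle for viscosity solutions of $\Aff[u]=f$ with Neumann conditions holds by the theory of \cite{giga2006surface} (as recorded in the Comparison Principle theorem and the remark following it), so there is a unique viscosity solution $u$. For the scheme side, stability is exactly the statement that the solutions $u^{e,\epsilon}$ (resp.\ $u^{f,\epsilon}$) are uniformly bounded, which is part of the hypothesis; as noted in the subsection on convergence, existence of such solutions follows from the discrete comparison principle together with finite-dimensional fixed-point arguments as in \cite{ObermanSINUM} and \cite{FroeseObermanFiltered}. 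Monotonicity of $-\Aff^e$ is precisely ellipticity, established in Lemma~\ref{lemma:elliptic}. Consistency is Lemma~\ref{lemma:consistencyAC}: as $h,d\theta\to0$ (which is forced by $\epsilon = \epsilon(h,d\theta)\to0$), the scheme $\Aff^e[\phi]$ converges to $\Aff[\phi]$ at every point, with the appropriate modified notion of consistency at points where $\nabla\phi = 0$ or $\Aff[\phi]=0$ inherited from the mean-curvature consistency notion.

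For the elliptic scheme $\Aff^e$ the three conditions are therefore immediate from the cited lemmas, and Theorem~\ref{BStheorem} gives $u^{e,\epsilon}\to u$ locally uniformly. For the filtered scheme $\Aff^f$ the argument is the same but uses the filtered version of the Barles--Souganidis theorem from \cite{FroeseObermanFiltered}: the key extra input is that the filtered scheme differs from the elliptic scheme by a uniformly bounded amount, which holds by construction of $S^\e$ since $|S^\e(a,b)-b|\le\rho = 10\epsilon$ for all $(a,b)$. Thus $\Aff^f[u] = \Aff^e[u] + \bO(\epsilon)$ uniformly, $\epsilon\to0$, so the filtered scheme is a perturbation of a convergent monotone scheme in exactly the sense required, and its solutions $u^{f,\epsilon}$ converge locally uniformly to the same limit $u$.

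The main obstacle is not any single computation but the bookkeeping around the degenerate points: one must be careful that the modified consistency notion (the one stated for $\Delta_1$ in terms of the least and greatest eigenvalues of $D^2\phi$, and propagated through $A$) is the correct hypothesis for the Barles--Souganidis theorem for this geometric PDE, and that the Neumann boundary condition is handled within the scheme $\Sk$ as encoded in the discrete comparison principle. Both of these are addressed by the framework of \cite{giga2006surface} and \cite{BSnum}, so the proof reduces to citing the right results. I would write it as: verify (i) comparison principle for the PDE (from \cite{giga2006surface}), (ii) monotonicity = ellipticity (Lemma~\ref{lemma:elliptic}), (iii) consistency (Lemma~\ref{lemma:consistencyAC}), (iv) stability (the uniform bound assumed), then apply Theorem~\ref{BStheorem} for $\Aff^e$ and its filtered analogue from \cite{FroeseObermanFiltered} for $\Aff^f$, the latter using $|S^\e(a,b)-b|\le 10\epsilon$.
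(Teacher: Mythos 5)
Your proposal is correct and follows the same route as the paper: the paper's own proof is a two-line citation of the comparison principle (guaranteed by the assumptions on $\Omega$), the Barles--Souganidis theorem for the elliptic scheme, and the filtered-scheme modification from the filtered-schemes reference. You simply spell out the hypotheses (ellipticity, consistency, stability, and the uniform $\bO(\epsilon)$ bound on $S^\e(a,b)-b$) that the paper leaves implicit because they are established in the preceding lemmas.
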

\begin{proof}
The assumptions on $\Omega$ guarantee that the PDE satisfies a comparison principle. Convergence for the elliptic discretization $-\Aff^e$ then follows from the Barles-Souganidis theorem. 
The modification of the proof for filtered schemes can be found in \cite{FroeseObermanFiltered}.
\end{proof}

\begin{theorem}\label{theorem:convergence_regular}
Let $\Omega$ be a bounded convex domain with a $C^2$ boundary. Assume that  $u(x,t)$ is the unique viscosity solution of $u_t = \Aff[u]$ in $\Omega \times [0,\infty)$, along with $u(x,0) = u_0(x)$ and homogeneous Neumman boundary conditions. Assume as well that $K$ and $L$ are picked so that $\Aff^{e,\delta}$ is consistent with $\Aff$. For each $\epsilon = \epsilon(h,d\theta) >0$,  let $u^{e,\e}$ (resp. $u^{f,\epsilon}$) be the uniformly bounded solution of the monotone (resp. filtered) time discretization \eqref{discretizationtime} with $dt \leq 1/{K^\dx}$ where $K^\dx$ is the Lipschitz constant of $\Aff^{e,\delta}[u]$ \eqref{Lipconst}. Then
\[u^{e,\e} \to u \quad \text{and} \quad u^{f,\e} \to u\]
locally uniformly, as $dt,\e  \to 0$.
\end{theorem}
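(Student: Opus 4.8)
The plan is to reduce the statement to the Barles--Souganidis theorem (Theorem~\ref{BStheorem}) for the elliptic scheme, exactly as was done for the one-dimensional model in Theorem~\ref{theorem:convergence_regular_1D}, and then to pass from the elliptic scheme to the filtered scheme by the bounded-perturbation argument of \cite{FroeseObermanFiltered}. Three ingredients must be checked: (a) a comparison principle for the limiting PDE; (b) consistency of the time discretization \eqref{discretizationtime}; (c) monotonicity and stability of \eqref{discretizationtime}.

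For (a), the hypothesis that $\Omega$ is a bounded convex domain with $C^2$ boundary is precisely what \cite{giga2006surface} requires to establish the Comparison Principle for \eqref{cauchyproblem} with homogeneous Neumann data, which is already recorded above; this also gives the uniqueness of the viscosity solution $u$. For (b), the hypothesis that $K,L$ are chosen so that $\Aff^{e,\delta}$ is consistent with $\Aff$ (for instance $K=\bO(h^{-1/9})$, $L=\bO(h^{-4/9})$, $d\theta=\bO(h^{2/3})$, as discussed after \ref{elliptic_scheme_regular}) provides spatial consistency; consistency of the explicit forward Euler step in time is immediate by Taylor expansion, and the homogeneous Neumann condition is discretized at the boundary nodes and absorbed into the scheme so that $\Aff^{e,\delta}$ is consistent with the full boundary value problem \eqref{cauchyproblem}.

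The core of the proof is (c). The operator $-\Aff^{e,\delta}$ is elliptic (the two-dimensional analogue of Lemma~\ref{lemma:1Delliptic}, since it is a sum of nondecreasing functions of the elliptic operators $|\nabla u^h|^\pm$ and $-\Delta_1^e u$), and it is Lipschitz continuous with constant $C^h$ given by \eqref{Lipconst}, the proof being the analogue of Lemma~\ref{lemma:Lipconst1D}. By \cite{ObermanSINUM}, for an elliptic finite difference operator with Lipschitz constant $C^h$, the forward Euler map $u\mapsto u+dt\,\Aff^{e,\delta}[u]$ is a monotone (non-strict) contraction whenever $dt\le 1/C^h$; adding an arbitrarily small multiple of $u$ to the scheme makes it a strict contraction for which the discrete comparison principle holds, so \eqref{discretizationtime} is monotone in the sense required by Theorem~\ref{BStheorem}, and the discrete comparison principle together with constant (or exact-solution-based) sub- and supersolutions gives the assumed uniform bound on $u^{e,\e}$, hence stability. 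Theorem~\ref{BStheorem} then yields $u^{e,\e}\to u$ locally uniformly as $h,d\theta\to 0$ and $dt\to 0$, that is, as $dt,\e\to 0$.

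For the filtered scheme, one checks directly from the definition of $S^\e$ that $|S^\e(a,b)-b|\le C\e$ for a universal constant $C$: in the three regimes $|a-b|<\e$, $\e\le d\le\rho$, and $d>\rho$ the difference equals, respectively, $|a-b|<\e$, the quantity $\tfrac{\rho-d}{\rho}|a-b|\le(1+10\sqrt2)\e$, and $0$. Hence $|\Aff^{f,\delta}[u]-\Aff^{e,\delta}[u]|\le C\e\to 0$ uniformly on the grid, which is exactly the condition under which the Barles--Souganidis argument is adapted to filtered schemes in \cite{FroeseObermanFiltered}; this gives $u^{f,\e}\to u$ locally uniformly as well. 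I expect the main obstacle to be step (c): one must verify that the Lipschitz constant \eqref{Lipconst} is the correct one, since it governs the admissible time step, and for the filtered scheme one must give up ellipticity and instead lean entirely on the uniform smallness of the perturbation $\Aff^{f,\delta}-\Aff^{e,\delta}$; the remaining verifications are routine adaptations of the one-dimensional arguments and of the cited results.
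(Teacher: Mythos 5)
Your proposal is correct and follows essentially the same route as the paper's proof: comparison for the PDE from the convexity and regularity of $\Omega$ via \cite{giga2006surface}, monotonicity of the forward Euler map from the ellipticity and Lipschitz constant \eqref{Lipconst} of $\Aff^{e,\delta}$ with $dt\le 1/K^h$ via \cite{ObermanSINUM}, the Barles--Souganidis theorem for the elliptic scheme, and the filtered-scheme modification of \cite{FroeseObermanFiltered}. You simply fill in details the paper leaves implicit (notably the uniform bound $\abs{S^\e(a,b)-b}\le C\e$, which the paper states as a standing requirement in Section~3.1 rather than verifying inside this proof).
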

\begin{proof}
The assumptions on $\Omega$ guarantee that the PDE satisfies a comparison principle. The elliptic scheme leads to a monotone time discretization, in other words, the solution map satisfies a comparison principle if $dt \leq 1/{K^\dx}$ where $K^\dx$ is the Lipschitz constant of the elliptic scheme by \cite{ObermanSINUM}. Then the Barles-Souganidis theorem \cite{BSnum} applies. For the time discretization of the filtered scheme, the convergence of filtered schemes in \cite{FroeseObermanFiltered} applies.
\end{proof}

\begin{remark}
Just like in the one-dimensional model, for fixed values of $K,L$, there is a unique viscosity solution, $u^\delta$, of the regularized PDE. Then, fixing $K,L$ and using the discretization above with $dt = \bO((h\: n_\theta)^2)$, the forward Euler method converges uniformly to $u^\delta$ as $h\to 0$. 
\end{remark}

\section{Numerical Results}\label{sec:results}

In this section, we start with a simple example to compare the affine invariant curvature motion and the regularized model. We present different examples on the evolution of a single curve in \autoref{sec:curves} under the affine curvature motion and compare it to the mean curvature motion. We test the accuracy of the schemes by computing solutions to the time-independent PDE in \autoref{subsec:accuracy} and to the time dependent equation in \autoref{sec:accuracytime}.   We mostly considered stationary problems because it was easier to generate exact solutions by applying the operator $\Aff$ to a given function $u$, and including $f = \Aff[u]$ as a source function.   For the time dependent problem, we took advantage of the exact solution from Lemma \ref{lemma:ellipse} to compare the accuracy of the solutions after a short time, $T= 0.1$. We test as well in \autoref{subsec:invariance} if the schemes satisfy numerically the morphology and affine invariance properties that \eqref{ACPDE} satisfies (see Theorem \ref{MoisanTheorem}).

\begin{definition}[Parameters for the discretization]
We used the regularized schemes with $K = 20 h^{-1/9}$ and $L = 20 h^{-4/9}$. For the elliptic discretization, we used two different stencils: the narrow elliptic scheme used $n_\theta = 3$ and the wider elliptic scheme used $n_\theta = 7$.  The filtered discretization used the wider elliptic scheme with 
\[
\epsilon(h,d\theta) = \sqrt{h} + d\theta/10.
\] 
For the forward Euler method with the elliptic and filtered schemes, we used a constant time step of $dt = 1/C_h$ where $C_h$ is given by \eqref{Lipconst}. For the forward Euler method with the standard finite difference scheme, we used the same time step as the filtered scheme, except when computing the steady state solutions in Example \ref{ex:static} where we used $dt = h^2/2$ (this choice of time step proved to be enough in practice). The stopping criteria was simply that the $l^\infty$ norm of the residual $\Abs{\Aff^{f,\delta}[u^n]-f}$ (and similarly for $\Aff^{e,\delta}, \Aff^a$) was below $tol = 10^{-5}$.
\end{definition}

\begin{example}\label{ex:Lipconst_regular}[Comparing the regularized to the unregularized operators]
We compare the evolution of an ellipse by the affine invariant curvature motion and by the regularized model. The ellipse should remain an ellipse of fixed eccentricity. The results were obtained by numerically solving \eqref{ACPDE} and $u_t = \Aff^\delta[u]$
respectively, with the initial condition
\[u_0(x,y) = \min\left\{\left(\frac{x}{2}\right)^2+y^2-1,1\right\}\]
and homogeneous Neumann boundary conditions on $\Omega = [-4,4]^2$.
The narrow elliptic scheme was used for the spatial discretization. The difference between the level sets of the solution of the two equations is visually indistinguishable. Measured in the $l^\infty$ norm ranges from $10^{-6}$ for early times steps and $10^{-7}$ for the later time steps. The level sets of the numerical solution obtained by solving the regularized PDE are depicted in Figure~\ref{fig:ACvsMC}.
\end{example}

\begin{remark}
While the regularized scheme with the more stringent time step is needed for the convergence proof, in practise we found, as reported in the previous example, using the (unregularized) elliptic discretization with the time step $dt = h^2/2$ gave nearly identical results to within two or more significant digits.   For the remaining examples, we present results using the regularized schemes (with $K = 20h^{-1/9}$ and $L = 20h^{-4/9}$).
\end{remark}

\subsection{Numerical examples showing curve evolution}\label{sec:curves}

In this section we present some numerical examples illustrating the geometric properties of the PDEs.  

\begin{example}[Ellipse]\label{ex:ellipse}
We compare the evolution of an ellipse by the affine invariant curvature motion and by mean curvature. For the former, the ellipse should remain an ellipse of fixed eccentricity. For the latter, the ellipse asymptotically approaches a circle instead. The results were obtained by numerically solving \eqref{ACPDE} and \eqref{MCevolution}, respectively, with the initial condition
\[u_0(x,y) = \min\left\{\left(\frac{x}{2}\right)^2+y^2-1,1\right\}\]
and homogeneous Neumann boundary conditions. We took $[-4,4]^2$ as the computational domain on a $128\times 128$ grid. 
As for the scheme used, we chose the narrow elliptic schemes for both. In Figure \ref{fig:ACvsMC}, we plot the zero level sets obtained at $t \in \{0,0.1,0.3,0.5,0.7,0.9\}$.
\end{example}


\begin{figure}[htp]
\centering
\begin{tabular}{cc}
\includegraphics[width=0.25\textwidth]{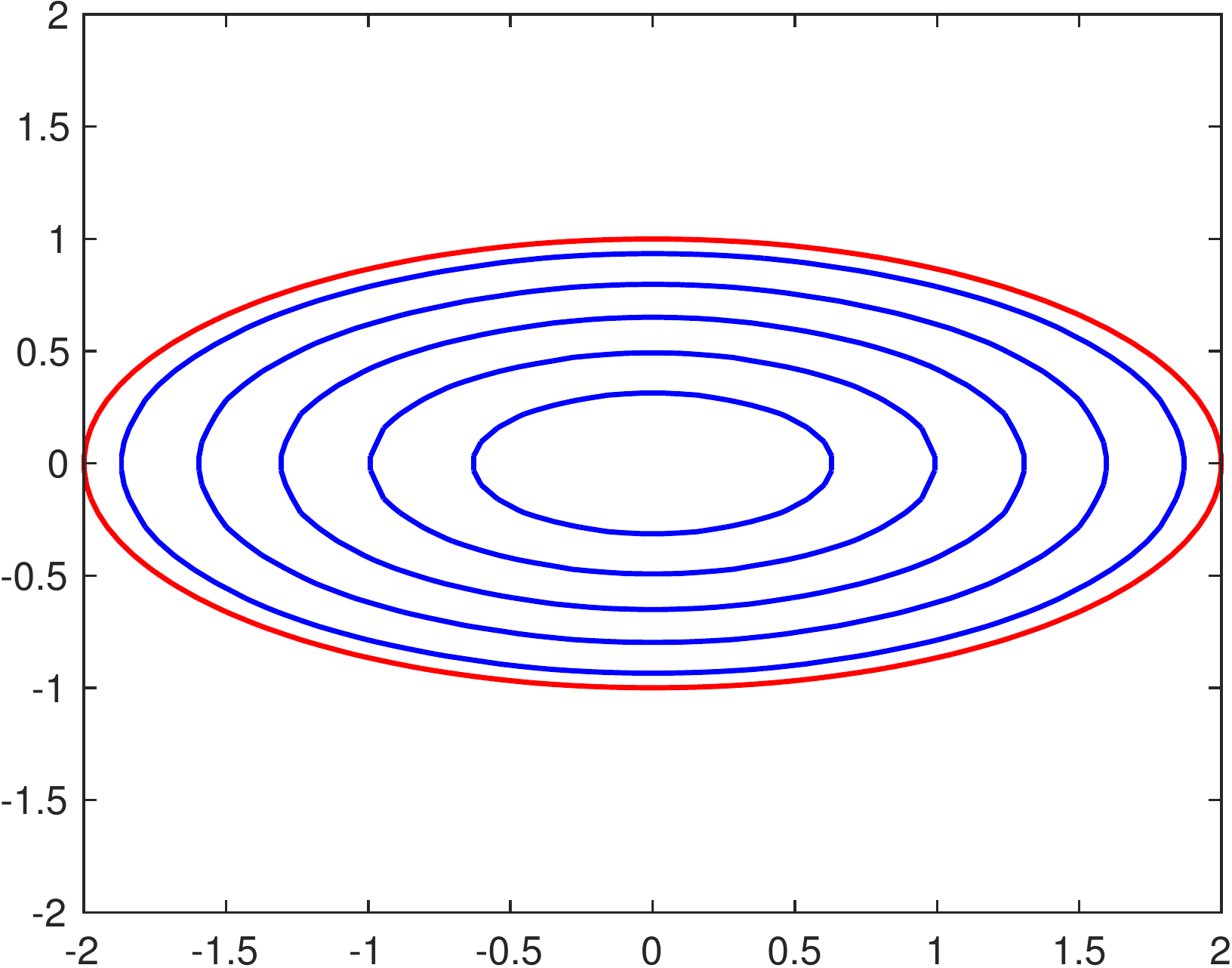} & \includegraphics[width=0.25\textwidth]{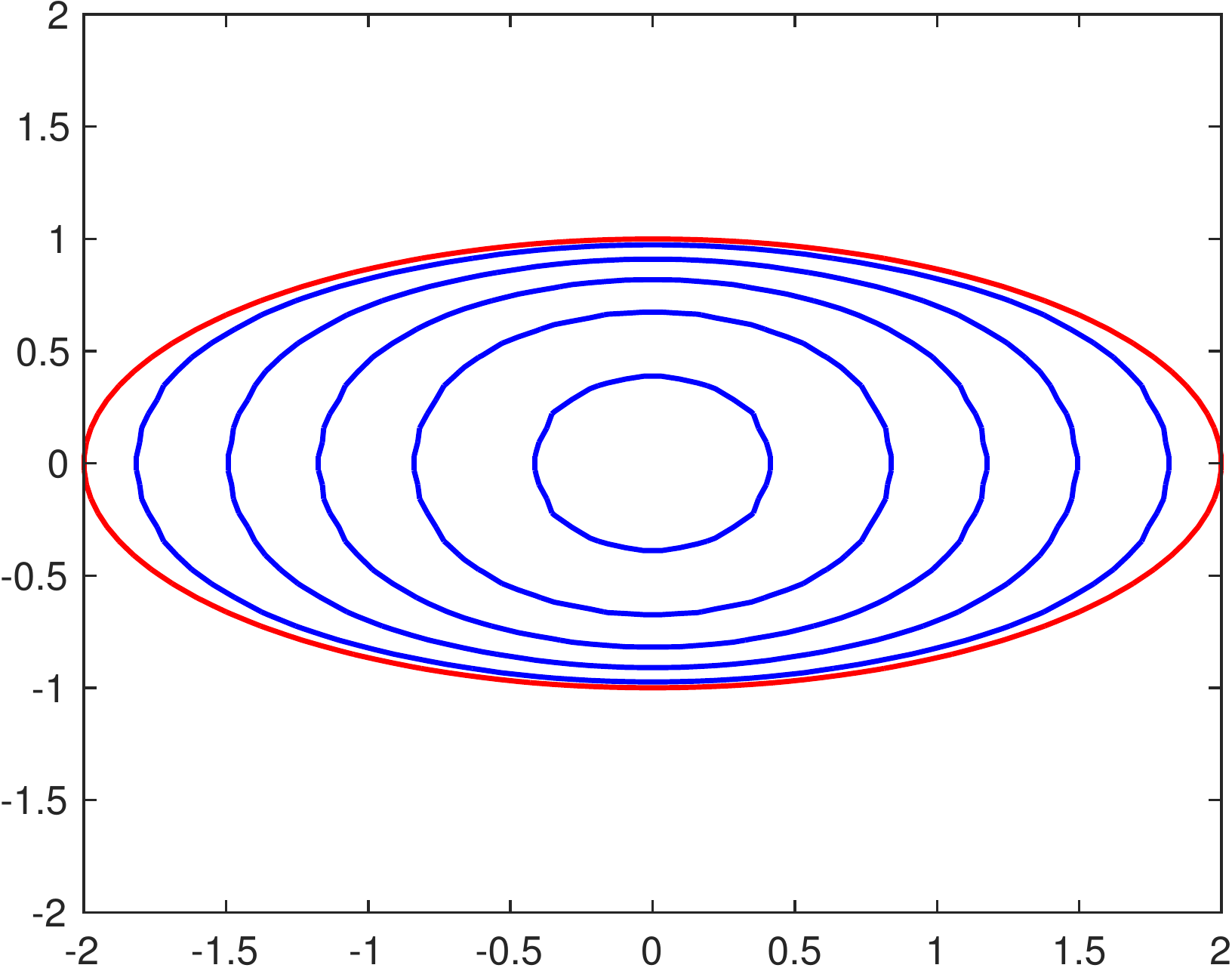}
\end{tabular}
\centering
\begin{tabular}{cc}
\includegraphics[width=0.25\textwidth]{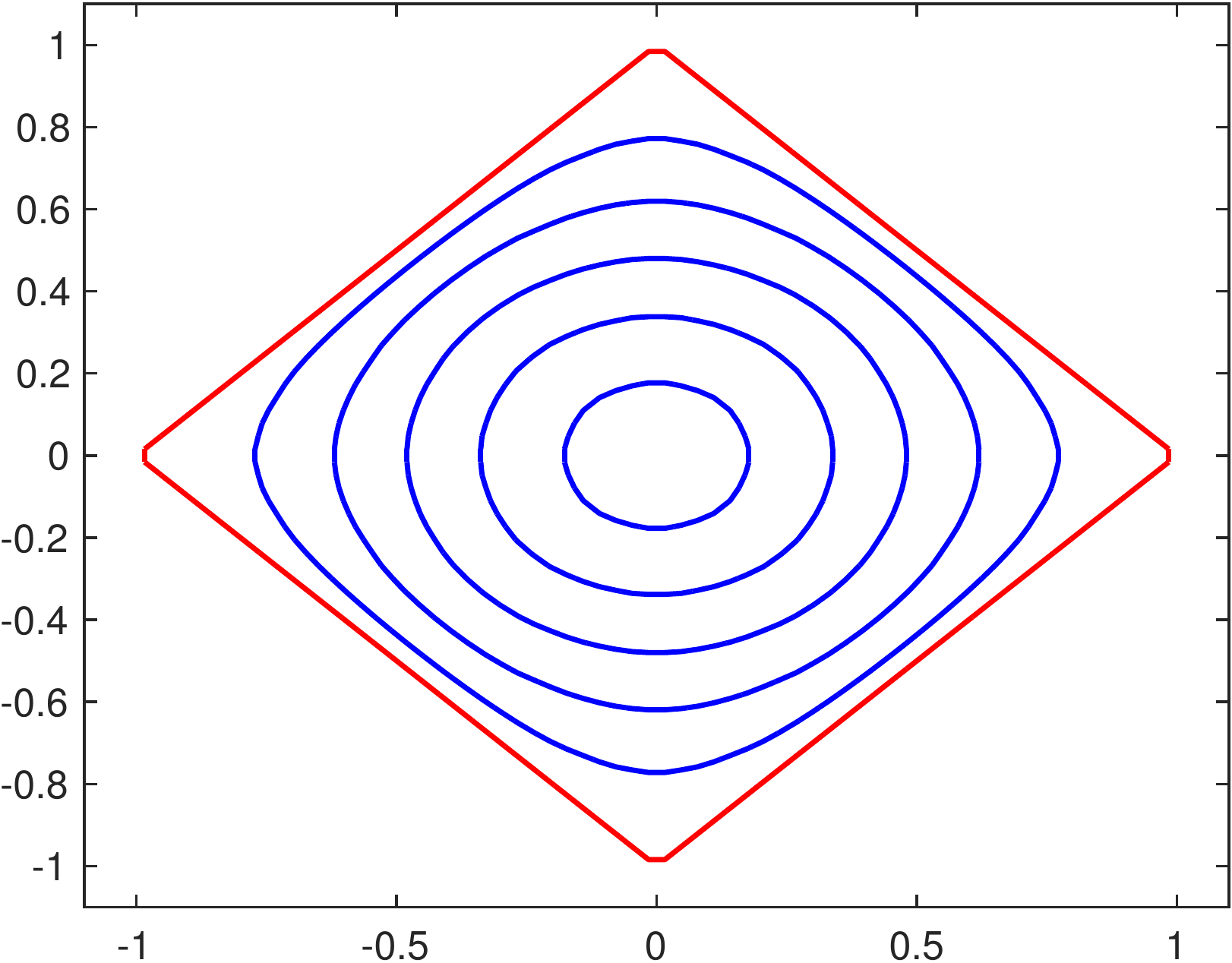} & \includegraphics[width=0.25\textwidth]{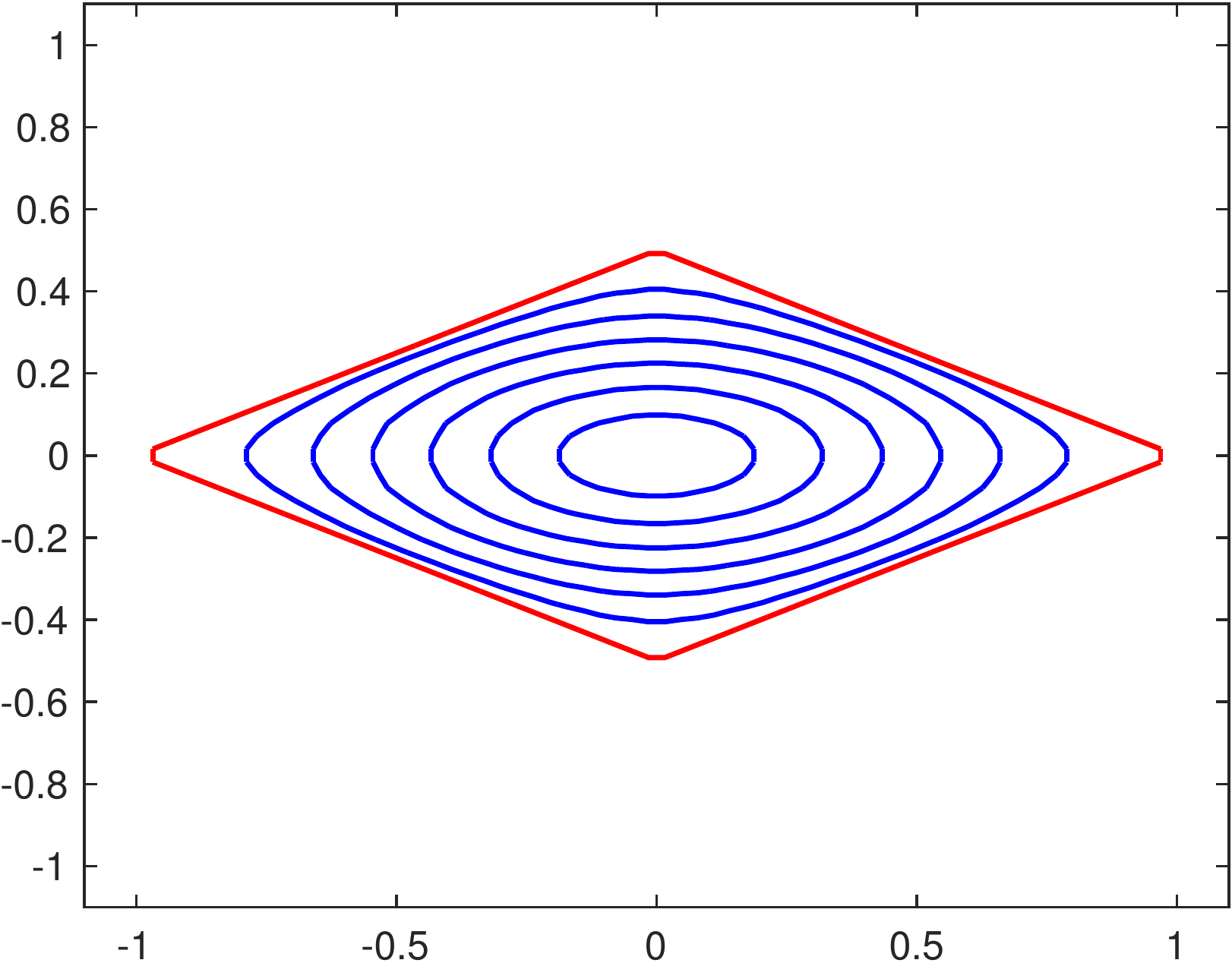}
\end{tabular}
\caption{(Top:) Evolution of
 an ellipse by (left) affine curvature, (right) mean curvature.
(Bottom:) Evolution by affine curvature of (left) a diamond and (right) a flatter diamond.}
\label{fig:ACvsMC}
\label{fig:exDiamond}
\end{figure}


\begin{figure}[htp]
\centering
\begin{tabular}{cccc}
\includegraphics[width=0.2\textwidth]{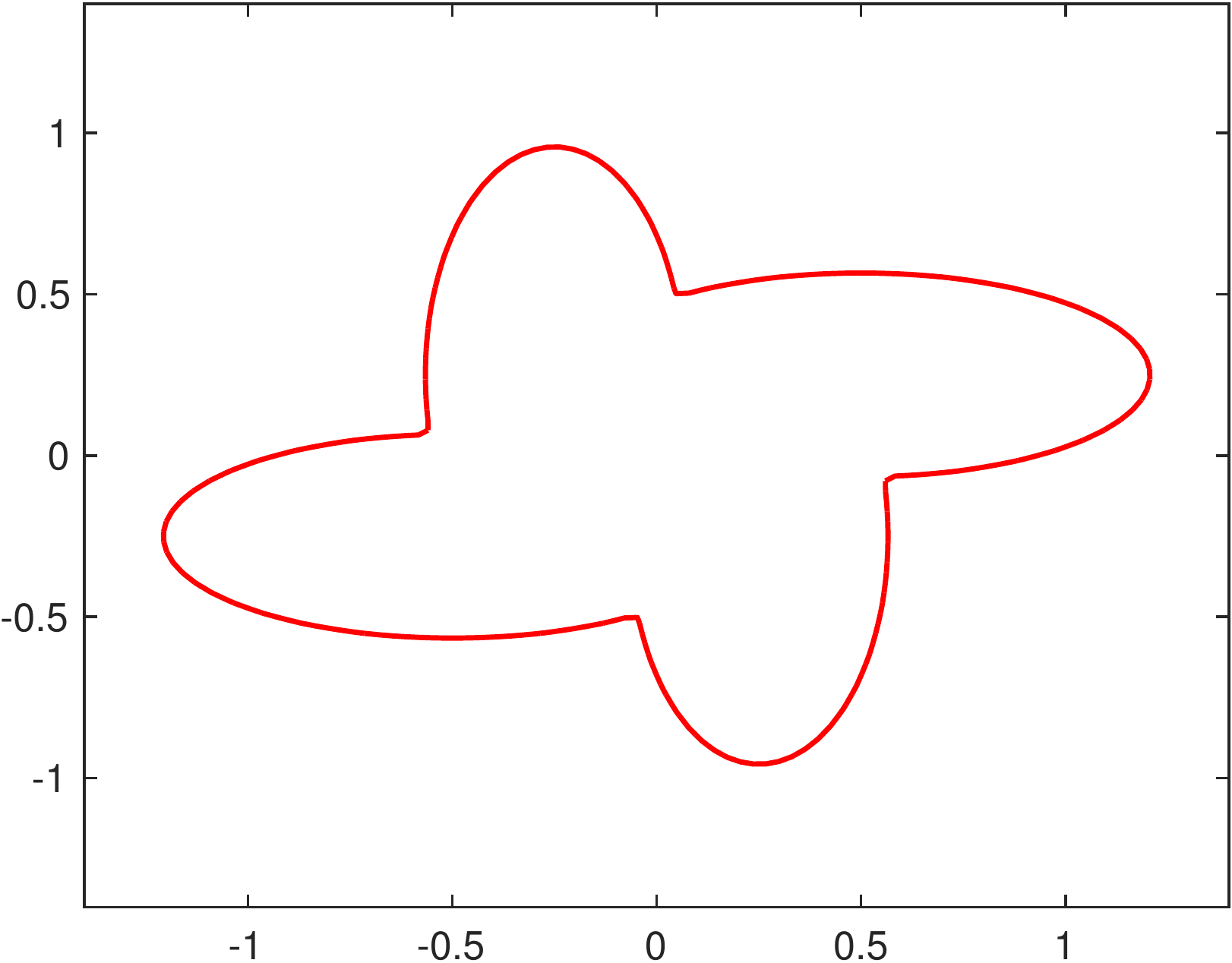} & \includegraphics[width=0.2\textwidth]{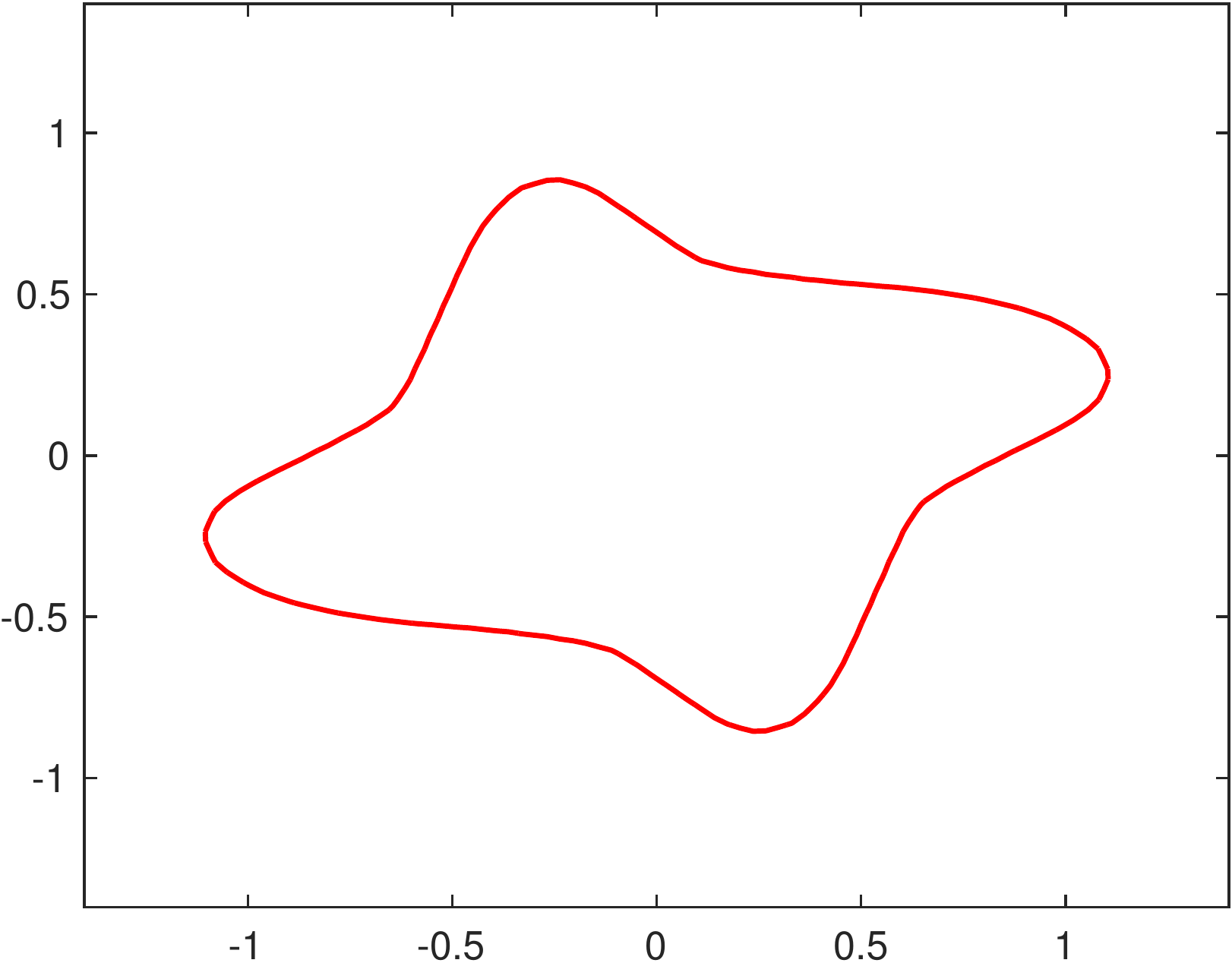} & \includegraphics[width=0.2\textwidth]{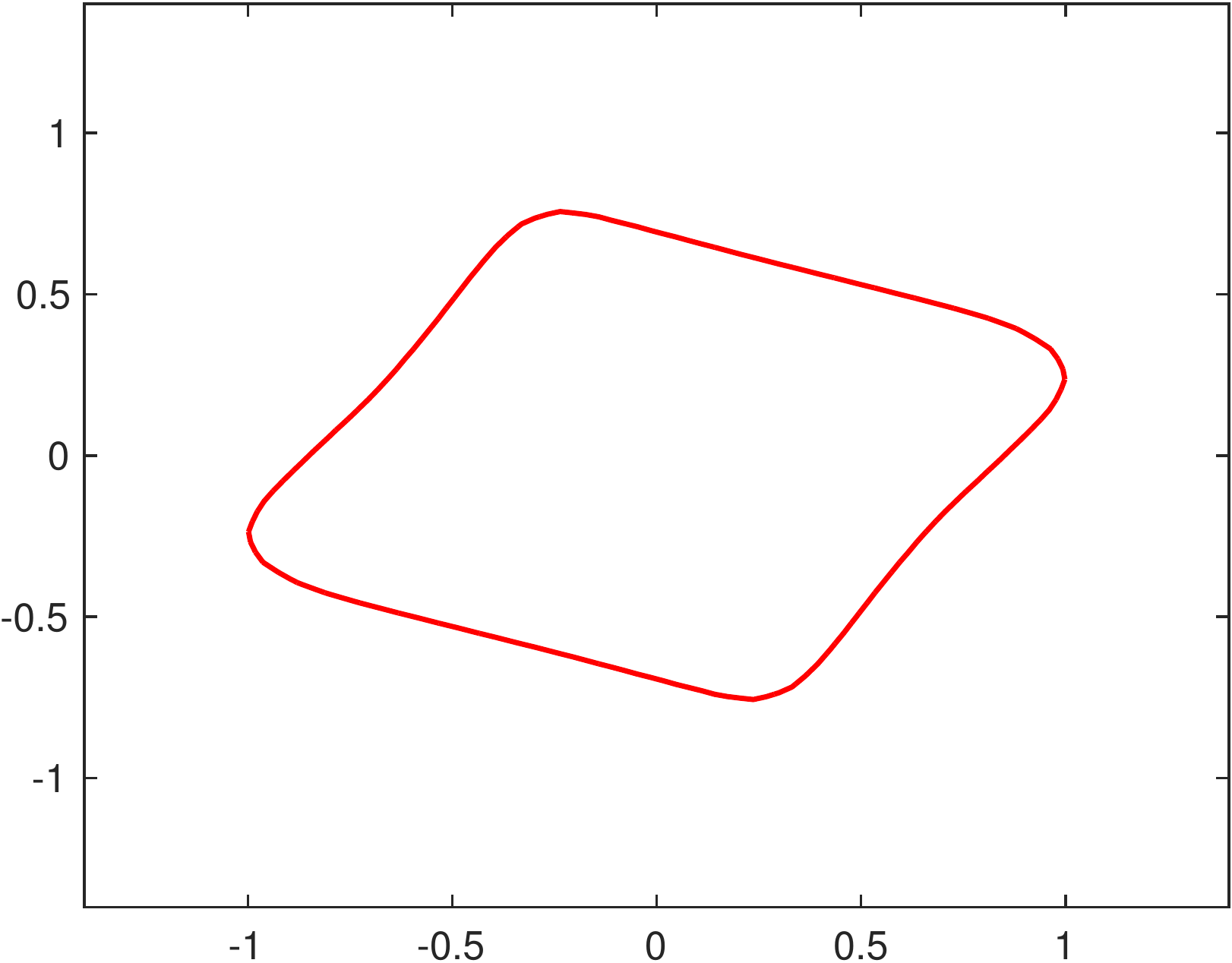} & \includegraphics[width=0.2\textwidth]{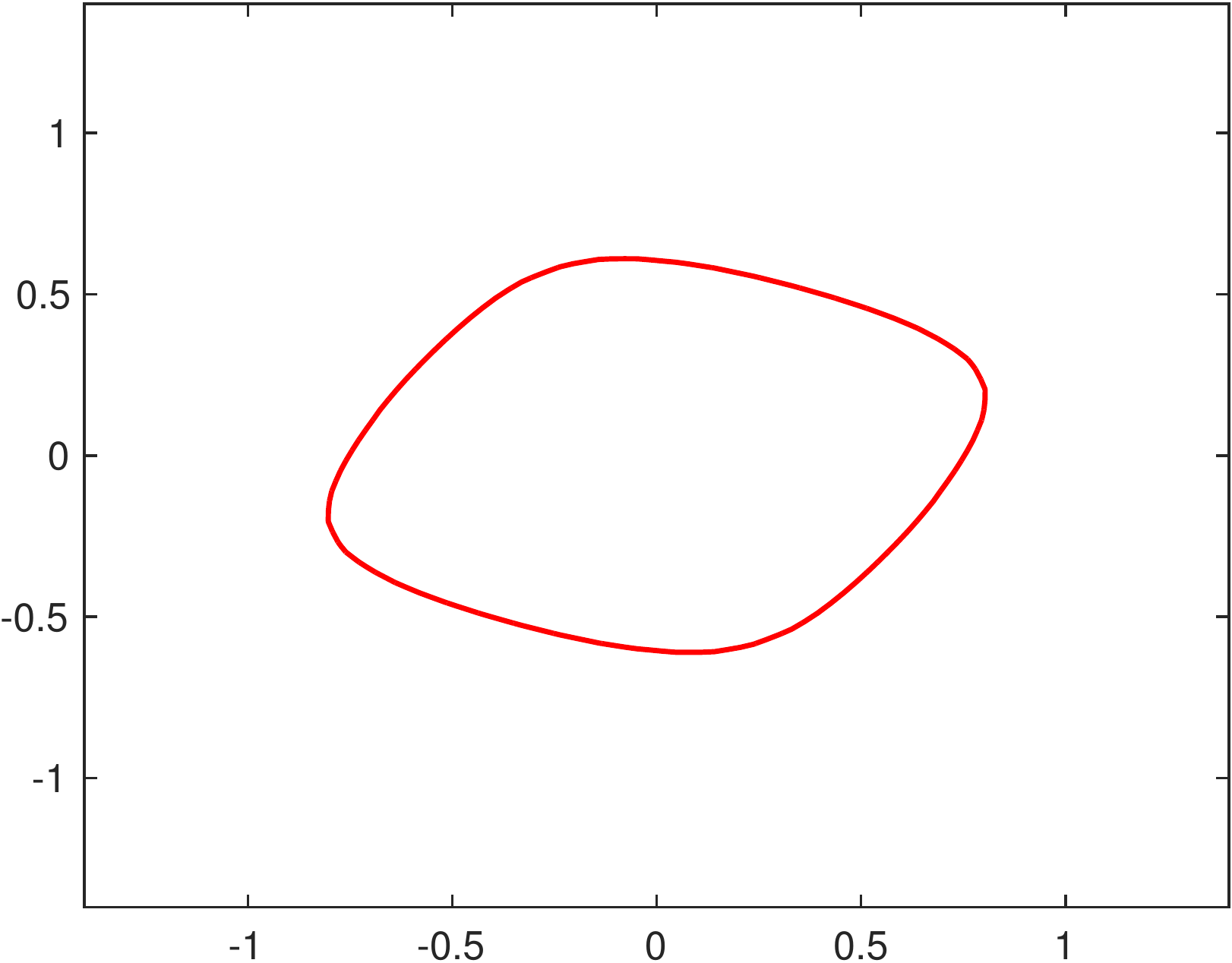}\\
\includegraphics[width=0.2\textwidth]{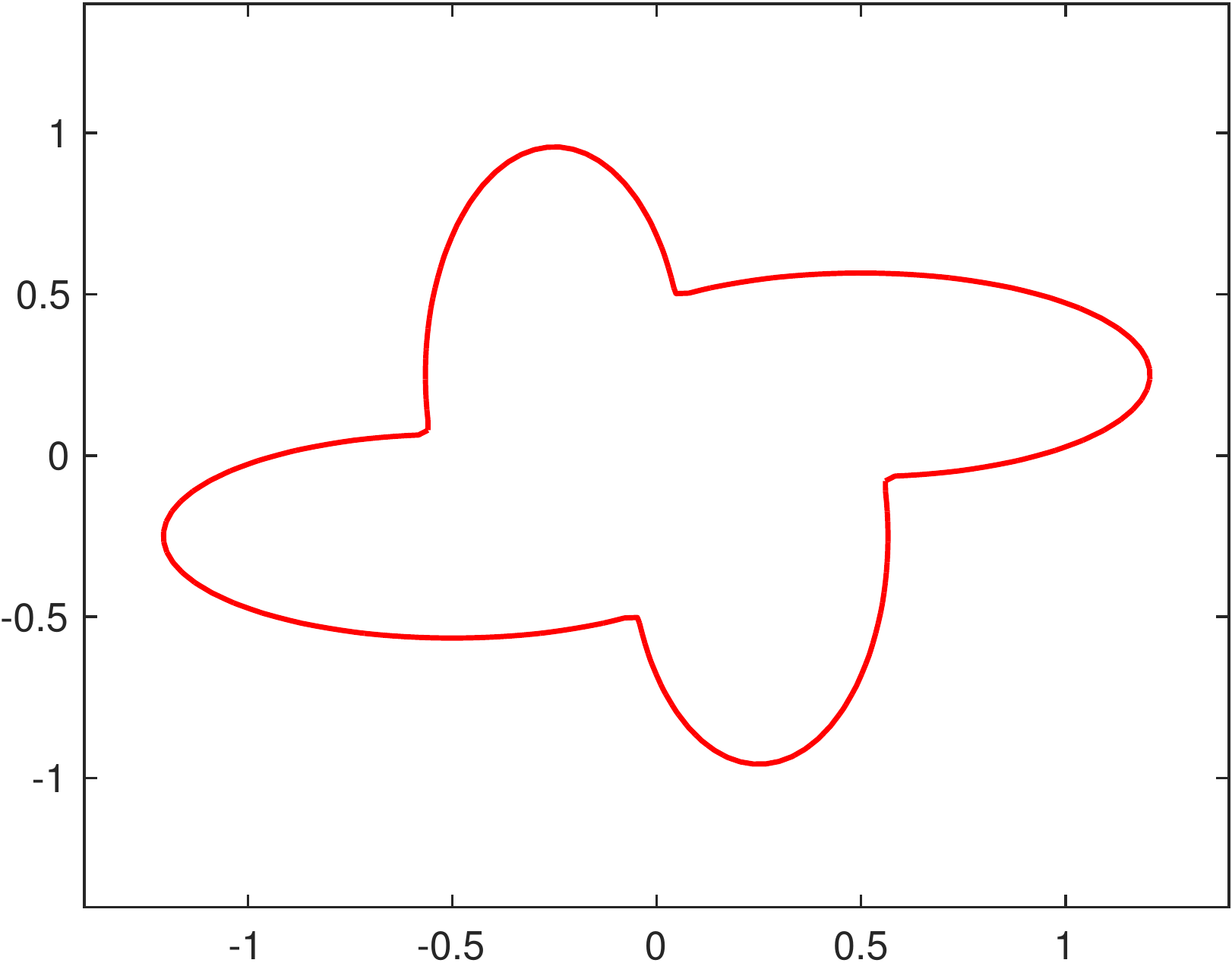} & \includegraphics[width=0.2\textwidth]{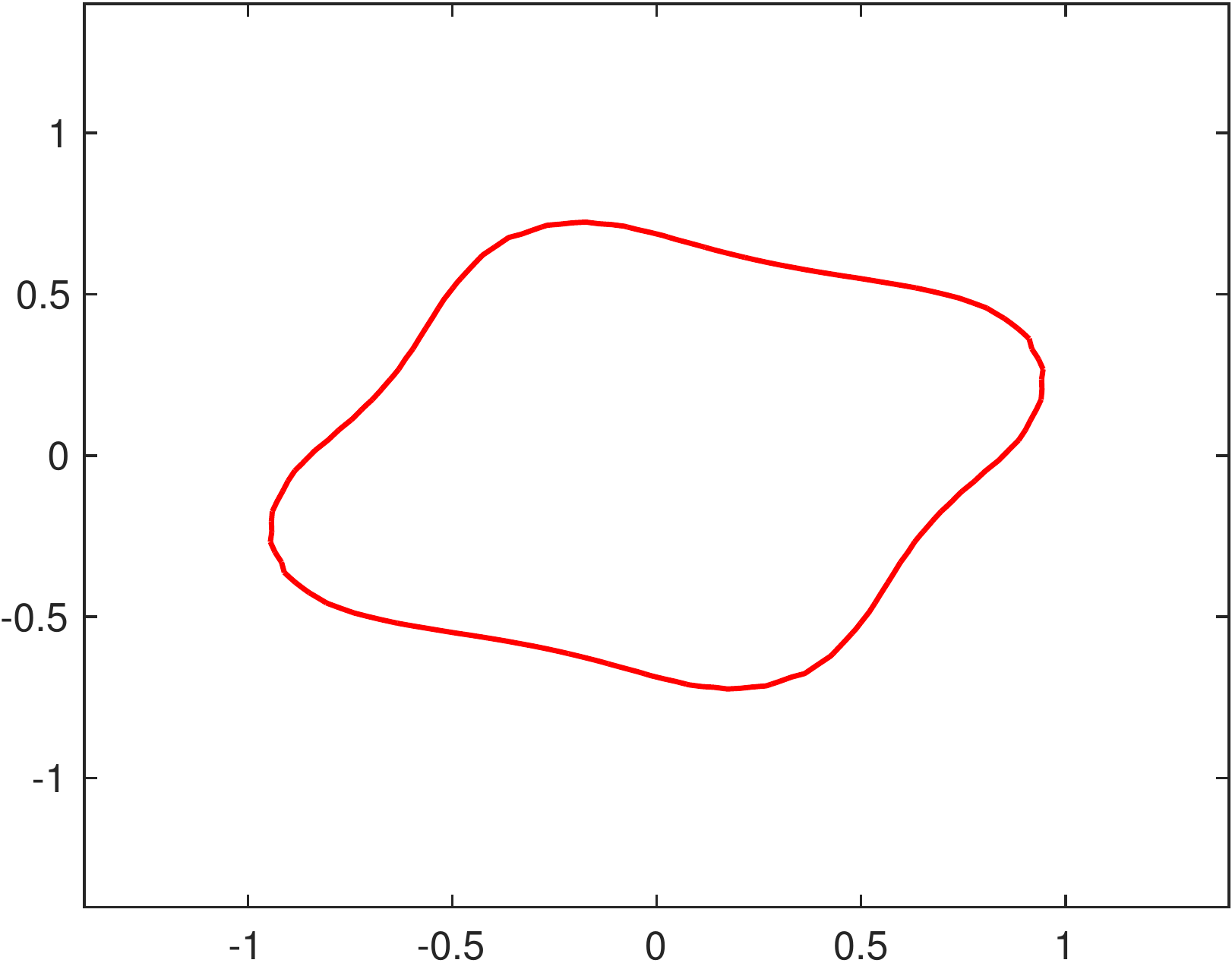} & \includegraphics[width=0.2\textwidth]{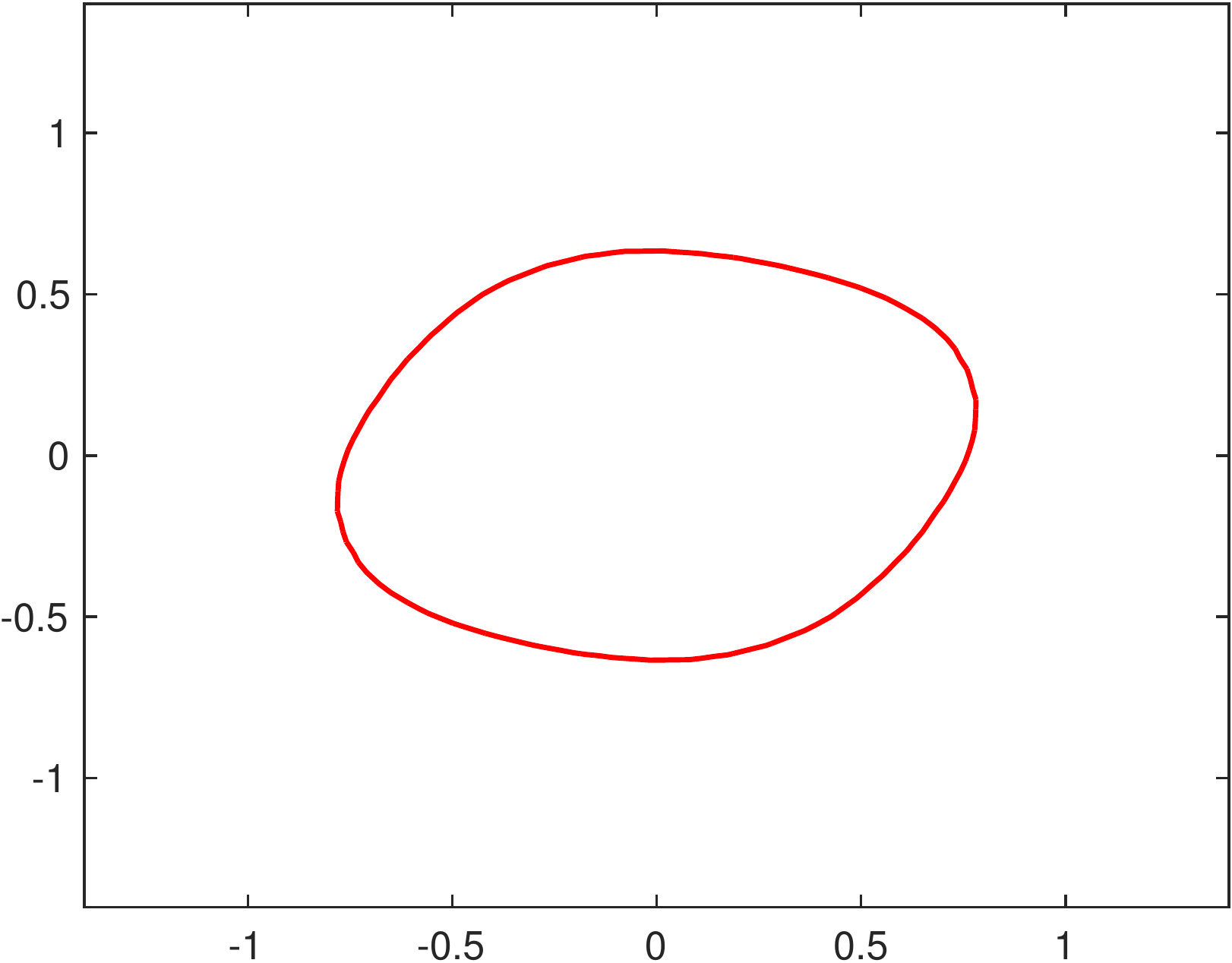} & \includegraphics[width=0.2\textwidth]{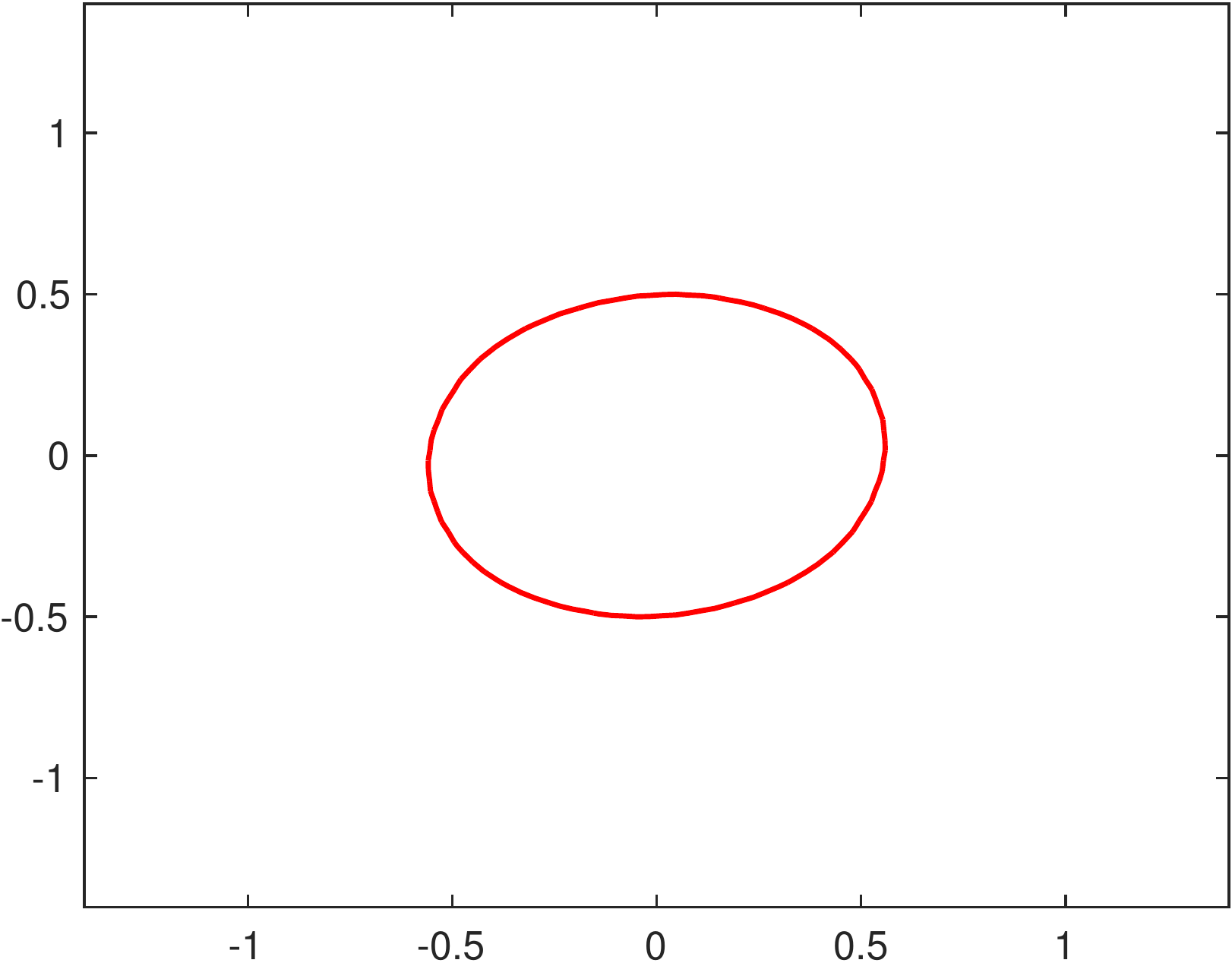}\\
\end{tabular}
\caption{Evolution of a fan-shape like curve under affine curvature motion (top) and mean curvature (bottom) at time $t~\in~\{0,0.05,0.1,.2\}$ (see Example~\ref{ex:clover} for more details).}
\label{fig:ex5}
\end{figure}

\begin{example}[Diamond]\label{ex:diamond}
We also compute the solution of \eqref{ACPDE} with initial condition
\[(a) \ u_0(x,y) = \min\left\{|x|+|y|-1,1\right\}, \qquad (b) \ u_0(x,y) = \min\left\{|x|+2|y|-1,1\right\}\]
and homogeneous Neumann boundary conditions. The exact solutions are not known. However, we do know that smooth convex curves evolving under affine curvature converge to ellipses until collapsing to a point and that is the behaviour we observed here (see Figure \ref{fig:exDiamond}). We took $[-2,2]^2$ as the computational domain on a $128 \times 128$ grid. 
As for the scheme, we chose again the narrow elliptic scheme.
In Figure \ref{fig:exDiamond}, we plot the zero level sets of the numerical solution from time $t = 0$ to $t = 0.5$ in increments of $0.1$ for example $(a)$ and from $t=0$ to $t=0.3$ in increments of $0.05$ for example $(b)$.
\end{example}

\begin{example}[Fan-shape curve]\label{ex:clover}
We also compute the solution of \eqref{ACPDE} and \eqref{MCevolution} with initial condition
\[u_0(x,y) = \min\left\{c_+^{(1)}(x,y),c_-^{(1)}(x,y),c_+^{(2)}(x,y),c_-^{(2)}(x,y),1\right\},\]
where
\[c_\pm^{(1)}(x,y) = \left(x\pm\frac{1}{2}\right)^2+5\left(y\pm\frac{1}{4}\right)^2-\frac{1}{2} \quad \text{and} \quad c_\pm^{(2)}(x,y) = 5\left(x\pm\frac{1}{4}\right)^2+\left(y\mp\frac{1}{4}\right)^2-\frac{1}{2}\]
and homogeneous Neumann boundary conditions. The exact solution is not known. As in the previous example, we took $[-2,2]^2$ as the computational domain on a $128 \times 128$ grid
 together with the narrow elliptic scheme.

Under the both curvature motions, the curve should initially become convex. At later times, under the affine curvature motion, the curve should evolve to an ellipse as opposed to a circle in the mean curvature case. In Figure \ref{fig:ex5}, we plot the zero level set of the numerical solutions at time $t \in \{0,0.05,0.1,.2\}$ and observe the exact behaviour described.
\end{example}

\subsection{Accuracy of stationary solutions}\label{subsec:accuracy}

We test the accuracy for the following Dirichlet problem 
\[\begin{cases}
\Aff[u] = f,	& \text{in } \Omega,\\
u = g,	& \text{on } \partial \Omega.
\end{cases}\]

Solutions are obtained by computing the steady state solution of 
$u_t = \Aff[u] - f$, with  $u(\cdot,t) = g$ on $\partial\Omega$ and $u(x,0) = u_0(x)$. 
We set $dt = 1/C_h$ where $C_h$ is given by \eqref{Lipconst} for the elliptic and filtered schemes and $dt = h^2/2$ for the standard finite difference scheme scheme.
These examples also demonstrate stability of the time dependent problem for elliptic and filtered scheme, as well as convergence to the steady solution, since we used the time dependent problem to obtain the solution. 

\begin{remark}
The unregularized schemes were also used. For these we set $dt = h^2/2$ for all examples, except for the filtered scheme in example (d) where we set $dt = h^2/8$. The results obtained were virtually the same.
\end{remark}

We set $u_0(x)$ to be the exact solution in a layer of seven grid points adjacent to the boundary. As a result, each discretization is initialized at the same set of grid points and therefore we can make a fair comparison of their accuracy. On the coarsest grid, we set $u_0(x) = 0$ on the interior grid points. 
To speed up calculations, on finer grids we set $u_0(x)$ to be interpolated solutions from the coarser grids at interior grid points. 

\begin{remark}
When considering Dirichlet boundary conditions we lose the comparison principle. However, we can always cap off the solution and impose homogeneous boundary conditions. 
\end{remark}

\begin{example}\label{ex:static}
We consider the following exact solutions
\begin{align*}
(a) & \quad u(x,y) = x^2+y^2, \quad f(x) = 2 \left(x^2 + y^2\right)^{\frac{1}{3}},\\
(b) & \quad u(x,y) = e^{x^2+y^2}, \quad f(x,y) = 2 \left(e^{3 (x^2 + y^2)}(x^2 + y^2)\right)^{\frac{1}{3}},\\
(c) & \quad u(x,y) = \left(x^2 + y^2\right)^{\frac{1}{3}}, \quad f(x) = \frac{4}{3},\\
(d) & \quad u(x,y) = \frac{\sin(2\pi x)\sin(2\pi y)}{4}, \quad f(x) = \frac{\pi^{\frac{4}{3}}}{2} \left(-\left(2+\cos(4\pi x)+\cos(4\pi y)\right) \sin(2\pi x)\sin(2\pi y)\right)^{\frac{1}{3}},
\end{align*}
with $\Omega = [-1,1]^2$. The solutions in (a),(b) and (d) are smooth, but the functions $f$ are only Holder continuous, $C^{0,2/3}$,  with singularities at the origin for (a) and (b), and at several points in example (d).   The solution in (c) is only $C^{0,2/3}$ with a singularity at the origin,  but in this case the function $f$ is constant. 
\end{example}

The results are presented in Table~\ref{table:static_error}. 
In Example~\ref{ex:static}$(a)$ the solution is a quadratic polynomial.  The accurate scheme $\Aff^a$ gives essentially machine precision, which is not surprising, since this scheme is second order accurate.  The filtered scheme $\Aff^f$ gives nearly the same accuracy (with a small discrepancy which could be eliminated by tuning the parameter).  On the other hand, both the narrow and wider elliptic scheme are much less accurate.   In contrast, in Example~\ref{ex:static}(b) the solution is smooth but not quadratic.  In this case we see that the accurate scheme appears to be converging to $\bO(h)$.  The elliptic schemes are less accurate and the filtered scheme is in between. 
In Example~\ref{ex:static}$(c)$ the solution is only Holder continuous.  In this case, the elliptic schemes have almost constant error near $0.01$ over the range of parameters used.   Despite the singular solution, the accurate scheme gives accuracy $\bO(h)$, and the filtered scheme does just as well.  
Example~\ref{ex:static}$(d)$ shows that the standard finite difference scheme does  not converge  as discussed in \autoref{sec:breakaccurate} (see Figure \ref{fig:exBreak}).   The narrow elliptic scheme has almost constant error $0.03$ and the wider elliptic scheme has error $0.1$ for the smallest grid, decreasing by a factor of two as the grid is refined.  The filtered scheme has the best accuracy, achieving an error of $0.001$ at the finest grid.

When comparing the different schemes, we have to account for the width of the stencil since for the elliptic schemes the wider schemes also have a larger spatial discretization error.  In general, the accuracy improved with the use of the wider stencil. Moreover, the filtered scheme performed as expected by providing better accuracy than the elliptic scheme and almost as good accuracy as the accurate scheme.   The final example shows that the standard finite difference scheme may not converge.  This may be due to the fact that, despite the solution being smooth,  there were multiple points where $f$ was singular.  Geometrically, this solution has several points where curves shrunk to zero. 

We also considered for Example~\ref{ex:static} the elliptic schemes with $n_\theta = 1$ and $2$. These provided poor accuracy with the directional resolution error easily dominating the spatial resolution error.  The errors do not decrease to zero as we decrease the grid size. This property is consistent with the theoretical results since convergence is only proved as both $h,d\theta \to 0$, which is indeed observed in the numerical results.  On the other hand, using the narrow and wider schemes, the accuracy of the elliptic scheme was good enough for the filtered scheme to give accuracy comparable to the accurate scheme in many examples.  In principle we still need to send $d\theta \to 0$, but in practice, the rate at which it needs to go to zero is much slower than $h$ when the filtered scheme is used. 

Finally, we point that the choice of $\epsilon$ is not an easy one and it has hard to pick an $\epsilon$ that yields optimal results in each example presented. By choosing $\epsilon$ larger, it is possible to achieve with the filtered schemes the accuracy of the standard schemes in Examples \ref{ex:static} (a), (b), (c). However, such choice is too permissive for Example \ref{ex:static} (d). As pointed out in \autoref{subsec:filter}, $\epsilon$ needs to be chosen small enough in order for the monotone scheme to be active to ensure convergence. (This is comparable to the CFL condition in time dependent equations: methods are convergent as $dt,h \to 0$, with $dt$ satisfying the CFL condition.)

\begin{table}[htp]
\centering
\footnotesize
\begin{tabular}{ccccc}
\hline
& \multicolumn{4}{l}{Errors and order, Example \ref{ex:static} $(a)$}\\ \cline{2-5}
N & Narrow Elliptic ($n_\theta = 3$) & Wide Elliptic ($n_\theta = 7$) & Standard & Filter \\ \hline
32 & \num{3.565e-02} & \num{3.978e-02} & \num{2.922e-07} & \num{5.104e-07} \\
64 & \num{2.555e-02} & \num{2.696e-02} & \num{3.019e-07} & \num{2.827e-07} \\
128 & \num{1.623e-02} & \num{1.678e-02} & \num{1.982e-07} & \num{1.984e-07} \\
256 & \num{1.050e-02} & \num{1.055e-02} & \num{9.293e-08} & \num{5.678e-05} \\
\hline \hline
& \multicolumn{4}{l}{Errors and order, Example \ref{ex:static} $(b)$}\\ \cline{2-5}
N & Narrow Elliptic $(n_\theta = 3)$ & Wide Elliptic ($n_\theta = 7$) & Standard & Filter \\ \hline
32 & \num{5.462e-02} & \num{7.666e-02} & \num{1.922e-03} & \num{1.985e-03} \\
64 & \num{5.207e-02} & \num{5.872e-02} & \num{9.875e-04} & \num{8.904e-03} \\
128 & \num{4.105e-02} & \num{3.725e-02} & \num{3.385e-04} & \num{7.262e-03} \\
256 & \num{3.173e-02} & \num{2.240e-02} & \num{9.798e-05} & \num{8.065e-03} \\
\hline \hline
& \multicolumn{4}{l}{Errors and order, Example \ref{ex:static} $(c)$}\\ \cline{2-5}
N & Narrow Elliptic $(n_\theta = 3)$ & Wide Elliptic ($n_\theta = 7$) & Standard & Filter \\ \hline
32 & \num{1.387e-02} & \num{2.386e-02} & \num{1.129e-02} & \num{1.129e-02} \\
64 & \num{1.310e-02} & \num{7.683e-03} & \num{4.625e-03} & \num{4.625e-03} \\
128 & \num{9.302e-03} & \num{8.202e-03} & \num{1.859e-03} & \num{1.872e-03} \\
256 & \num{6.445e-03} & \num{7.156e-03} & \num{7.426e-04} & \num{7.570e-04} \\
\hline \hline
& \multicolumn{4}{l}{Errors and order, Example \ref{ex:static} $(d)$}\\ \cline{2-5}
N & Narrow Elliptic $(n_\theta = 3)$ & Wide Elliptic ($n_\theta = 7$) & Standard & Filter \\ \hline
32 & \num{3.964e-02} & \num{9.813e-02} & - & \num{1.926e-02} \\
64 & \num{3.788e-02} & \num{4.679e-02} & - & \num{8.309e-03} \\
128 & \num{3.688e-02} & \num{2.367e-02} & - & \num{2.482e-03} \\
256 & \num{3.003e-02} & \num{1.798e-02} & - & \num{9.697e-04} \\
\hline \hline \\
\end{tabular}
\caption{Accuracy in the $l^\infty$ norm and order of convergence of the schemes for Example \ref{ex:static} with regularized schemes.}
\label{table:static_error}
\end{table}

\subsection{Accuracy for the time dependent problem}\label{sec:accuracytime}

Recall here that for general boundary conditions, the time dependent PDE \eqref{ACPDE} requires Neumann boundary conditions in order for uniqueness of viscosity solutions to hold.  We have already established (in the previous section) the stability of the numerical method. In the following examples, we test the accuracy of solutions, comparing two different wide stencil discretizations, along with regularized filtered discretization and the (generally unstable) standard finite difference method.  Consequently we test as well the accuracy using Dirichlet boundary conditions coming from the exact solution of Lemma \ref{lemma:ellipse}.

\begin{example}[Neumann BC]\label{ex:ellipseNeumann}
We consider the exact solution
\[u(x,y,t) = \min\left\{t + \frac{3}{4}\left(\frac{b}{a}x^2+\frac{a}{b}y^2\right)^{2/3}-1,0\right\}.\]
with $a = 2$ and $b = 1$ (see Lemma \ref{lemma:ellipse}).
By taking the minimum with $0$, we are imposing homogeneous Neumann boundary conditions, thus avoiding having to deal with boundary of the computation domain. We set $\Omega = [-3,3]^2$.

We display the numerical error in the $l^\infty$ norm at time $T = 0.1$ in Table \ref{table:ellipseTimeDependent}.
\end{example}

\begin{example}[Dirichlet BC]\label{ex:ellipseDirichlet}
We consider the exact solution
\[u(x,y,t) = t + \frac{3}{4}\left(\frac{b}{a}x^2+\frac{a}{b}y^2\right)^{2/3}.\]
with $a = 2$ and $b = 1$ (see Lemma \ref{lemma:ellipse}). This is the same example as in Example \ref{ex:ellipseNeumann}, but we consider Dirichlet boundary conditions instead. Therefore, we prescribe the exact solution at a seven point layer at the boundary for all time $t$. This way all schemes are initialized at the same set of grid points and thus we can compare their accuracy.

The error in the $l^\infty$ norm at time $T = 0.1$ is presented in Table \ref{table:ellipseTimeDependent}.
\end{example}

When using Neumman boundary conditions, we observed slow convergence, with errors near $0.01$, slowly decreasing as the grid size improved.  The accuracy improved as we went from the narrow to the wider elliptic scheme, and further improved as we went to the accurate and then the filtered scheme. In the case of Dirichlet boundary conditions, the accuracy is better overall and the error decrease is slightly faster. The difference is explained by the cap off done in the Neumman boundary conditions that introduces an additional error in the solution. However, this error does not propagate to the whole domain as the level sets of the solution shrink to its interior and so when we look at the error away from the cap off, we recover results very similar to the Dirichlet boundary conditions.

\begin{table}[htp]
\centering
\footnotesize
\begin{tabular}{ccccc}
\hline
& \multicolumn{4}{l}{Errors and order, Example \ref{ex:ellipseNeumann}}\\ \cline{2-5}
N & Narrow Elliptic ($n_\theta = 3$) & Wide Elliptic ($n_\theta = 7$) & Standard &  Filter \\
\hline
32 & \num{4.845e-02} & \num{6.691e-02} & \num{4.894e-02} & \num{4.888e-02} \\
64 & \num{4.432e-02} & \num{4.607e-02} & \num{2.977e-02} & \num{2.975e-02} \\
128 & \num{3.544e-02} & \num{2.823e-02} & \num{2.457e-02} & \num{2.438e-02} \\
256 & \num{2.971e-02} & \num{2.080e-02} & \num{1.747e-02} & \num{1.724e-02} \\
512 & \num{2.764e-02} & \num{1.652e-02} & \num{1.205e-02} & \num{1.182e-02} \\
\hline \hline \\
& \multicolumn{4}{l}{Errors and order, Example \ref{ex:ellipseDirichlet}}\\ \cline{2-5}
N & Narrow Elliptic ($n_\theta = 3$) & Wide Elliptic ($n_\theta = 7$) & Standard &  Filter \\
\hline
32 & \num{2.182e-02} & \num{1.449e-02} & \num{1.985e-02} & \num{1.985e-02} \\
64 & \num{1.435e-02} & \num{1.160e-02} & \num{1.279e-02} & \num{1.279e-02} \\
128 & \num{9.580e-03} & \num{7.517e-03} & \num{5.566e-03} & \num{5.567e-03} \\
256 & \num{6.404e-03} & \num{4.854e-03} & \num{2.442e-03} & \num{2.409e-03} \\
512 & \num{6.090e-03} & \num{4.288e-03} & \num{1.036e-03} & \num{1.002e-03} \\
\hline \hline \\
\end{tabular}
\caption{Error in the $l^\infty$ norm of the whole computational domain at time $t=0.1$ for the time dependent Example  \ref{ex:ellipseNeumann} and \ref{ex:ellipseDirichlet}.}
\label{table:ellipseTimeDependent}
\end{table}

\subsection{Numerical study of the morphology and affine invariance properties}\label{subsec:invariance}

In this section, we test if our proposed schemes satisfy numerically the morphology and affine invariance properties that \eqref{ACPDE} satisfies (see Theorem \ref{MoisanTheorem}).

\begin{example}\label{ex:morphology}
In this example, we test numerically if the schemes presented here satisfy the morphology property of the affine curvature evolution (\eqref{morphology} in Theorem \ref{MoisanTheorem}). We consider two examples: $(a)$ $g_1(x) = e^x$, $(b)$ $g_2(x) = x^3$.
We take
\[u_0(x,y) = \min\left\{\left(\frac{x}{2}\right)^2+y^2-1,0\right\}\]
and compare $\Phi_t(g_v\circ u_0)$ with $g_v \circ \Phi_t(u_0)$ at $t=1$ for $v = 1,2$. We took $[-3,3]^2$ as the computational domain with homogeneous Neumann boundary conditions. The results are displayed in Table \ref{table:Morphology}. The difference in the $l^\infty$ norm is one order of magnitude smaller than the observed accuracy for the schemes in \autoref{subsec:accuracy}. Based on these examples, the morphology property seems to hold numerically.


\begin{table}[htp]
\centering
\footnotesize
\begin{tabular}{ccccc}
\hline
& \multicolumn{4}{l}{Difference in the $l^\infty$ norm, Example \ref{ex:morphology} (a)}\\ \cline{2-5}
N & Narrow Elliptic $(n_\theta = 3)$ & Wide Elliptic ($n_\theta = 7$) & Standard &  Filter \\
\hline
32 & \num{8.943e-03} & \num{1.037e-02} & \num{3.419e-03} & \num{3.446e-03} \\
64 & \num{5.709e-03} & \num{6.111e-03} & \num{1.954e-03} & \num{1.970e-03} \\
128 & \num{4.061e-03} & \num{3.257e-03} & \num{1.061e-03} & \num{1.109e-03} \\
256 & \num{3.115e-03} & \num{1.871e-03} & \num{5.907e-04} & \num{6.109e-04} \\
512 & \num{2.604e-03} & \num{1.161e-03} & \num{3.207e-04} & \num{3.397e-04} \\
\\ \hline
& \multicolumn{4}{l}{Difference in the $l^\infty$ norm, Example \ref{ex:morphology} (b)}\\ \cline{2-5}
N & Narrow Elliptic $(n_\theta = 3)$ & Wide Elliptic ($n_\theta = 7$) & Standard &  Filter \\
\hline
32 & \num{3.450e-02} & \num{7.023e-02} & \num{6.947e-03} & \num{6.947e-03} \\
64 & \num{1.730e-02} & \num{2.842e-02} & \num{1.881e-03} & \num{1.877e-03} \\
128 & \num{1.032e-02} & \num{9.355e-03} & \num{6.254e-04} & \num{6.333e-04} \\
256 & \num{6.894e-03} & \num{4.307e-03} & \num{2.283e-04} & \num{2.307e-04} \\
512 & \num{5.372e-03} & \num{2.316e-03} & \num{8.343e-05} & \num{8.506e-05} \\
\end{tabular}
\caption{Difference in the $l^\infty$ norm between $\Phi_t(g_v \circ u_0)$ and $g_v\circ \Phi_t(u_0)$ for $v=1,2$ for Example \ref{ex:morphology}.}
\label{table:Morphology}
\end{table}
\end{example}

\begin{example}\label{ex:invariance}
In this example we do a qualitative test of the affine invariance property, which in practice is what one needs for applications in image analysis. In order to do so we plot the level sets of the affine invariant motion by curvature \eqref{ACPDE} with
\[u(x,y) = \left(\frac{x}{2}\right)^2+y^2-1\]
and $u \circ \phi$ as the initial solutions. For the affine transformations $\phi(\x) = A\x$, we consider
\[(a) \text{ (rotation by $\pi/4$) } A = \begin{bmatrix}\frac{\sqrt{2}}{2} & \frac{\sqrt{2}}{2}\\ -\frac{\sqrt{2}}{2} & \frac{\sqrt{2}}{2}\end{bmatrix}, \quad (b) \: A = \begin{bmatrix}1 & 1\\ -1 & 1\end{bmatrix}.\]
We take $[-5,5]^2$ as the computational domain on a $256 \times 256$ grid. 

In Figure \ref{fig:ExLevelSet} we plot the zero level set of $\Phi_t(u\circ \phi)$ and $\left(\Phi_{t(\det \phi)^{2/3}}(u)\right) \circ \phi$ at $t=1$. As expected, the standard finite difference scheme and the filtered scheme provided the best results, being indistinguishable to the naked eye. For long time, the elliptic scheme did not provide as good results, a consequence of its lower accuracy (see~\autoref{subsec:accuracy}).  (For shorter times the curves were very close). 


\begin{figure}[htp]
\centering
\begin{tabular}{ccc}
\includegraphics[width=0.3\textwidth]{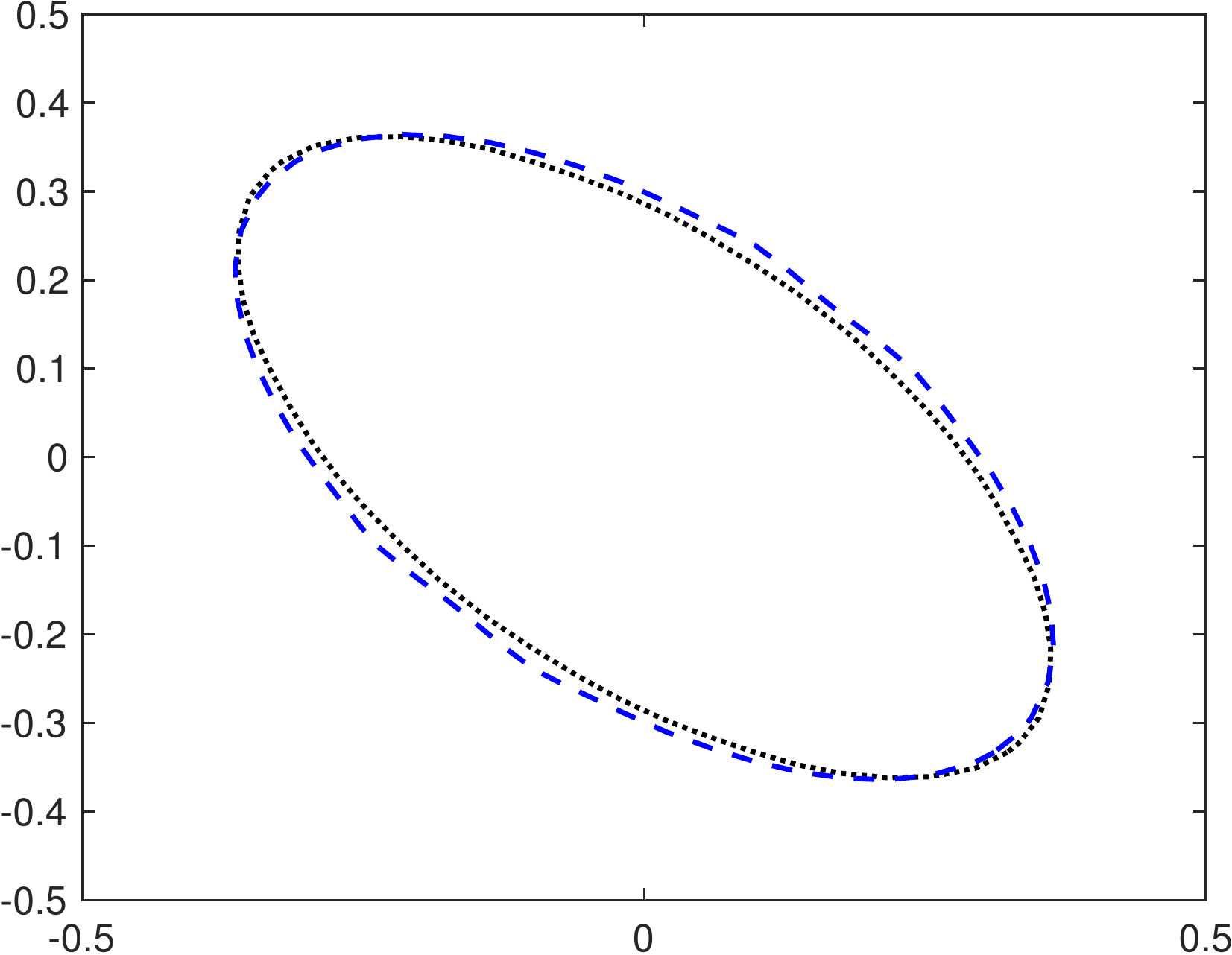} 
& \includegraphics[width=0.3\textwidth]{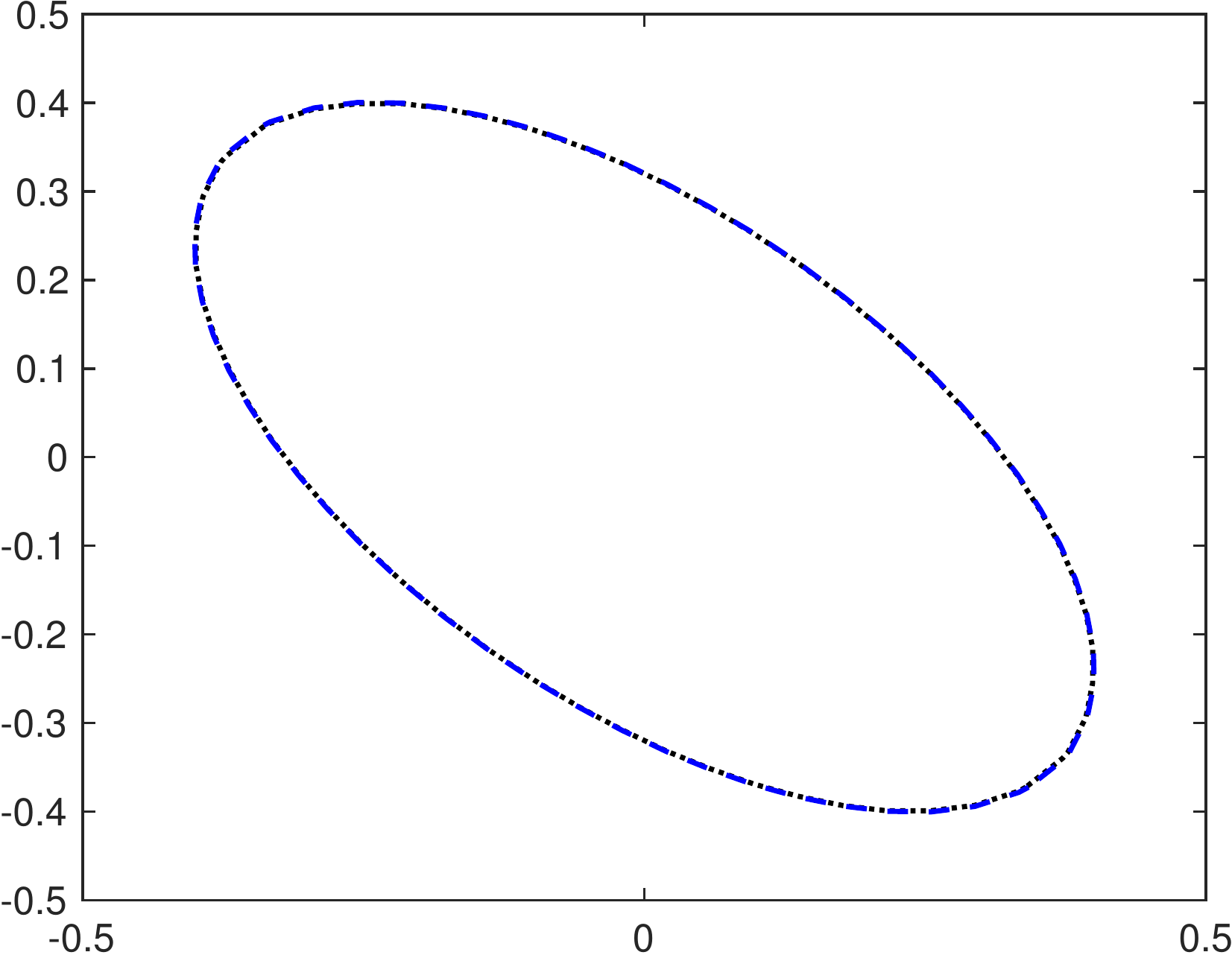}\\
\includegraphics[width=0.3\textwidth]{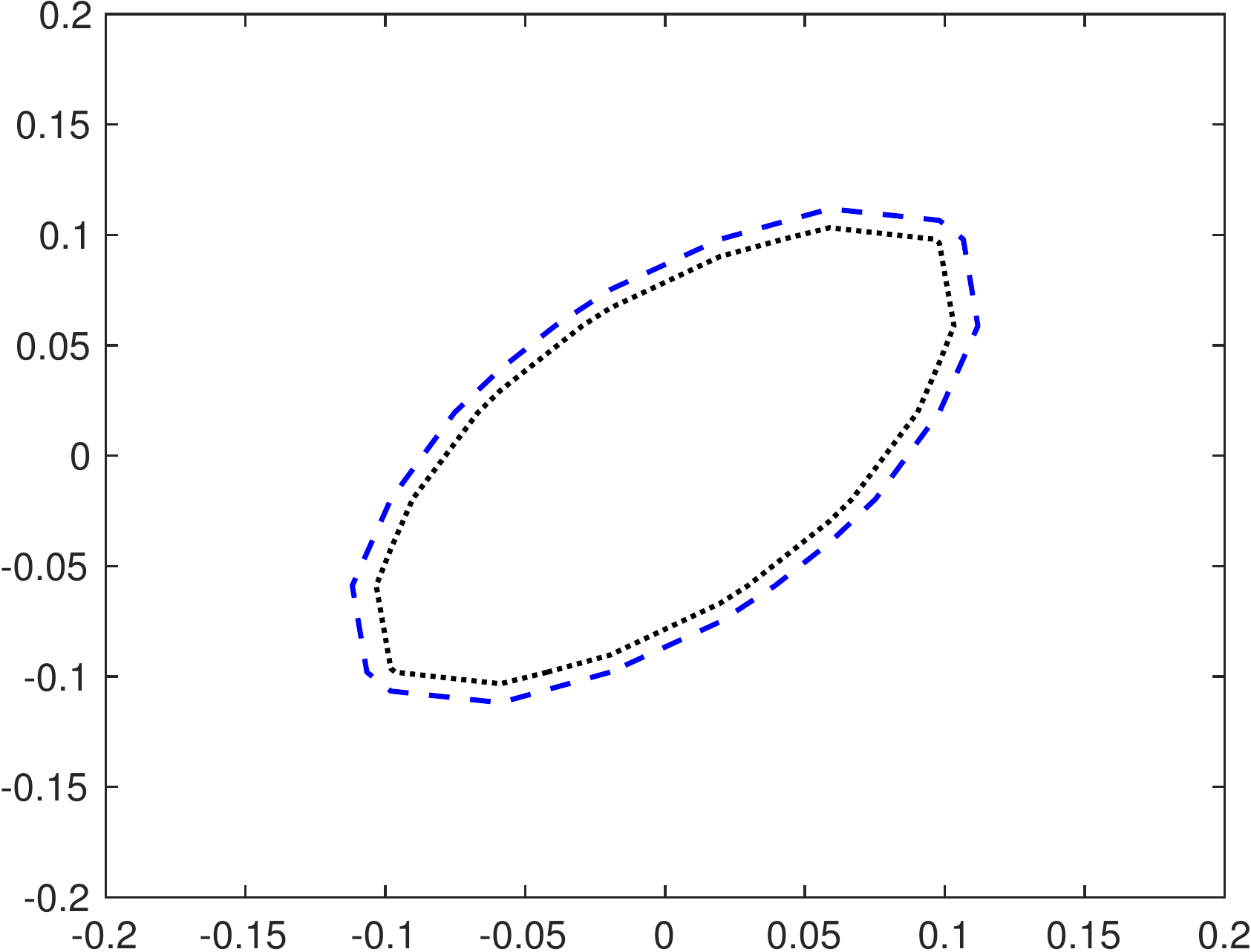} 
& \includegraphics[width=0.3\textwidth]{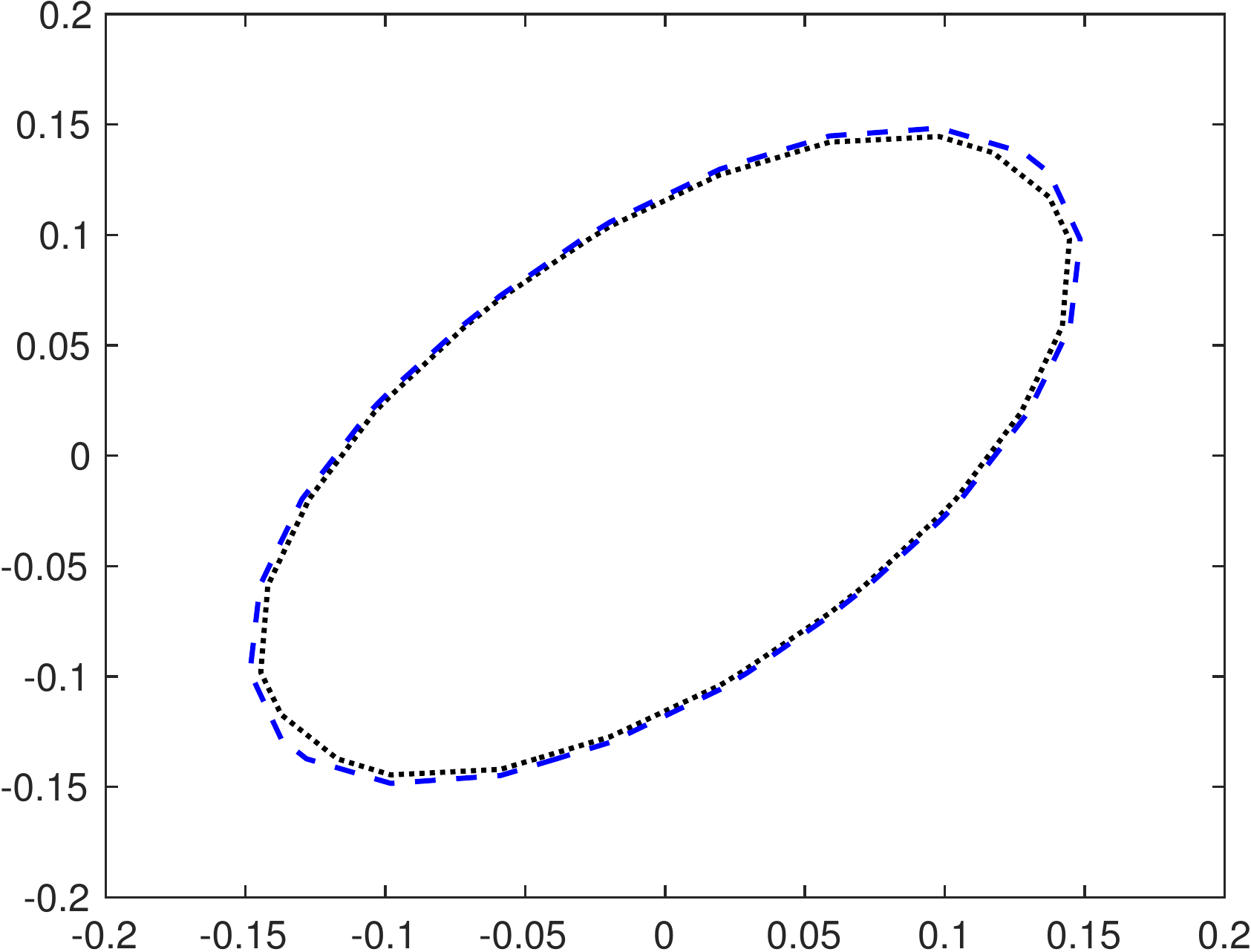}\\
\end{tabular}
\caption{Plot of the zero level sets in Example \ref{ex:invariance} of $\Phi_t(u\circ \phi)$ and $\left(\Phi_{t(\det \phi)^{2/3}}(u)\right) \circ \phi$ for regular elliptic scheme (left), 
regular filtered scheme (right) at time $t=1$ with $\phi$ given by $(a)$ (top) and at time $t = 0.7$ with $\phi$ given by $(b)$ (bottom).}
\label{fig:ExLevelSet}
\end{figure}

We point out that when the affine transformation is a rotation by a multiple of $\pi/2$ or a reflection over a line $L$ that makes an angle multiple of $\pi/4$ with the x-axis, we obtain essentially machine precision (The difference in the $l^\infty$ norm was of the order $10^{-10}$.). This is expected since our stencil is invariant under these transformations.
\end{example}

\section{Conclusions}

We presented a convergent finite difference discretization of the PDE for motion of level sets by affine curvature in two dimensions.
Computational examples demonstrated that the standard finite difference method is unstable, which motivates the need for a convergent method. 

The foundation of the scheme used an existing wide stencil  discretization of the mean curvature operator, combined with an elliptic discretization of the positive and negative eikonal operators, $\pm \abs{\grad u}$.   However, explicit time discretizations require Lipschitz continuous operators, which the affine curvature operator fails to be. Thus, we approximated it by a Lipschitz continuous regularization.   In theory, the explicit Euler discretization is stable using a time step $dt \leq \bO(h^{22/9})$ , with the constant determined by the width of the stencil.   In practice, we achieved numerically equivalent results using $dt = h^2/2$  and without the regularization, although there is no proof of stability with the less restrictive time step. 

A careful choice of the regularization parameters allowed for the regularized elliptic scheme to maintain the same order of accuracy as the unregularized scheme, while being provably convergent. The lower accuracy of both schemes, which results from the singularity of the operator, is overcomed by the use of the filtered schemes, which essentially attain the accuracy of the standard finite difference schemes, while being provably convergent.

Simulations demonstrated the geometric properties of the PDE were nearly preserved by the numerical solutions, including affine invariance, morphological properties, and the accurate representation of the shrinking ellipses.   Simulations validated the convergence of the elliptic scheme, and the improved accuracy of the filtered scheme. 

\bibliographystyle{amsalpha}
\bibliography{AffineCurvature}

\end{document}